\documentclass{article}

\usepackage[a4paper]{geometry}
\usepackage{booktabs}
\usepackage{graphicx}
\usepackage{amsmath}
\usepackage{amssymb}
\usepackage{mathrsfs}
\usepackage{latexsym}
\usepackage{subfigure}
\usepackage{floatrow}
\usepackage{crop}
\usepackage{algpseudocode,algorithm}
\usepackage{multirow}
\usepackage{bm}
\usepackage{bbm}
\usepackage{enumerate}
\usepackage{url}
\usepackage{array}
\usepackage{paralist}
\usepackage{diagbox}
\usepackage{booktabs}
\usepackage[dvipsnames]{xcolor}
\usepackage[colorlinks = true, pdfstartview = FitV, linkcolor = blue, citecolor = blue, urlcolor = blue]{hyperref}
\usepackage{authblk}

\usepackage{derivative}
\usepackage{placeins}
\usepackage{parskip}

\usepackage{lmodern}

\usepackage{rotating}

\usepackage[capitalise]{cleveref}
\crefname{equation}{}{}
\crefname{figure}{Figure}{Figures}
\creflabelformat{equation}{\textup{(#2#1#3)}}
\crefname{assumption}{Assumption}{Assumptions}
\algrenewcommand{\algorithmiccomment}[1]{\hfill\texttt{//} #1}
\newcommand{\LeftComment}[1]{\Statex\(\texttt{//}\) #1 \(\texttt{//}\)}

\makeatletter

\providecommand*{\theHalgorithm}{\arabic{algorithm}}
\providecommand*{\theHALG@line}{\theHalgorithm.\arabic{ALG@line}}

\AddToHook{env/algorithmic/before}{\def\@currentcounter{ALG@line}}

\makeatother

\crefalias{ALG@line}{line}
\crefname{line}{line}{lines}
\Crefname{line}{Line}{Lines}

\usepackage{fullpage}
\usepackage{multirow}

\usepackage[sort,numbers]{natbib}

\usepackage{arydshln}
\usepackage{enumitem}
\setlist[enumerate]{leftmargin=*,wide=0em, itemsep = 3pt,topsep= 3pt, label = {\bfseries \arabic*.}}
\setlist[itemize]{leftmargin=*,wide=0em, itemsep= 3pt,topsep= 3pt}

\usepackage{pifont}
\newcommand{\cmark}{\textcolor{green!60!black}{\ding{51}}} 
\newcommand{\xmark}{\textcolor{red!70!black}{\ding{55}}}   

\usepackage{xspace}

\usepackage{accents}

\usepackage{stackengine}
\stackMath
\newcommand\tsup[2][2]{%
	\def\useanchorwidth{T}%
	\ifnum#1>1%
	\stackon[-.5pt]{\tsup[\numexpr#1-1\relax]{#2}}{\scriptscriptstyle\sim}%
	\else%
	\stackon[.5pt]{#2}{\scriptscriptstyle\sim}%
	\fi%
}
\makeatletter
\newcommand{\longdash}[1][2em]{%
	\makebox[#1]{$\m@th\smash-\mkern-7mu\cleaders\hbox{$\mkern-2mu\smash-\mkern-2mu$}\hfill\mkern-7mu\smash-$}}
\makeatother
\newcommand{\omitskip}{\kern-\arraycolsep}

\newcommand{\real}{\mathbb{R}}

\DeclareMathOperator*{\argmin}{arg\,min}

\newcommand*\tageq{\refstepcounter{equation}\tag{\theequation}}

\newcommand{\tol}{\text{tol}}
\newcommand{\tint}{\text{int}}
\newcommand{\bndry}{\text{bndry}}

\newcommand{\flow}{f_{\text{low}}}

\newcommand{\LHess}{\grad_{\xx \xx}^2 \sL}
\newcommand{\Lgrad}{\gradx \sL}
\newcommand{\gradf}{\grad f }
\newcommand{\Hessf}{\grad^2 f}
\newcommand{\primeb}[1]{\left[#1\right]'}

\newcommand{\SOL}{\ensuremath{\texttt{SOL}}}
\newcommand{\LPC}{\ensuremath{\texttt{LPC}}}
\newcommand{\FLAG}{\ensuremath{\texttt{FLAG}}}

\newcommand{\bSSh}{\hat{\bSS}}

\newcommand{\hPP}{\hat{\PP}}
\newcommand{\hPPtt}{\hPP^{(t)}}

\newcommand{\PPNJ}{\PP_{\Null(\JJ)}}
\newcommand{\PPNJk}{\PP_{\Null(\JJ_k)}}
\newcommand{\PPNJxx}{\PP_{\Null(\JJ(\xx))}}
\newcommand{\PPRJT}{\PP_{\Range(\JJ^\transpose)}}

\newcommand{\tJJ}{\tilde{\JJ}}
\newcommand{\tbgg}{\tilde{\bgg}}

\newcommand{\yyzero}{\yy^{(0)}}

\newcommand{\vvtt}{\vv^{(t)}}
\newcommand{\vvii}{\vv^{(i)}}

\newcommand{\tdd}{\tilde{\dd}}
\newcommand{\bdd}{\bar{\dd}}
\newcommand{\ddtt}{\dd^{(t)}}
\newcommand{\bddtt}{\bar{\dd}^{(t)}}
\newcommand{\ddii}{\dd^{(i)}}
\newcommand{\bddii}{\bar{\dd}^{(i)}}
\newcommand{\ddss}{\dd^{(s)}}
\newcommand{\bddss}{\bar{\dd}^{(s)}}
\newcommand{\ddttm}{\dd^{(t-1)}}

\newcommand{\tddtt}{\tilde{\dd}^{(t)}}

\newcommand{\bddttm}{\bar{\dd}^{(t-1)}}

\newcommand{\bddttp}{\bar{\dd}^{(t+1)}}

\newcommand{\ddone}{\dd^{(1)}}

\newcommand{\bddzero}{\bar{\dd}^{(0)}}

\newcommand{\bddone}{\bar{\dd}^{(1)}}

\newcommand{\rrzero}{\rr^{(0)}}

\newcommand{\brrzero}{\bar{\rr}^{(0)} }

\newcommand{\rrii}{\rr^{(i)}}

\newcommand{\brrii}{\bar{\rr}^{(i)}}
\newcommand{\rrtt}{ \rr^{(t)} }
\newcommand{\brrtt}{ \bar{\rr}^{(t)} }

\newcommand{\trrtt}{\tilde{\rr}^{(t)}}

\newcommand{\brrttm}{\bar{\rr}^{(t-1)}}

\newcommand{\normal}{\text{norm}}
\newcommand{\tangent}{\text{tan}}

\newcommand{\omegatan}{\omega_{\tangent}}
\newcommand{\gammatan}{\gamma_{\tangent}}
\newcommand{\gammanorm}{\gamma_{\normal}}

\newcommand{\tautan}{\tau_{\tangent}}
\newcommand{\taunorm}{\tau_{\normal}}
\newcommand{\taunull}{\tau_{\text{null}}}
\newcommand{\pitrial}{\pi^{\text{trial}}}

\newcommand{\pib}{\bar{\pi}}
\newcommand{\pit}{\tilde{\pi}}

\newcommand{\alphat}{\tilde{\alpha}}
\newcommand{\alphab}{\bar{\alpha}}
\newcommand{\Db}{\bar{D}}
\newcommand{\epsc}{\varepsilon_c}
\newcommand{\epsp}{\varepsilon_p}

\newcommand{\tbomega}{\tilde{\bomega}}
\newcommand{\tbOmega}{\tilde{\bOmega}}
\newcommand{\hbOmega}{\hat{\bOmega}}
\newcommand{\hbOmegatt}{\hat{\bOmega}^{(t)}}

\newcommand{\sB}{\mathcal{B}}

\newcommand{\sD}{\mathcal{D}}

\newcommand{\sF}{\mathcal{F}}

\newcommand{\sK}{\mathcal{K}}
\newcommand{\sR}{\mathcal{R}}

\newcommand{\sL}{\mathcal{L}}
\newcommand{\sN}{\mathcal{N}}

\newcommand{\sO}{\mathcal{O}}
\newcommand{\sP}{\mathcal{P}}

\newcommand{\sV}{\mathcal{V}}

\newcommand{\sX}{\mathcal{X}}

\newcommand{\sZ}{\mathcal{Z}}

\renewcommand {\AA}  { {\mathbf{A}} }

\newcommand {\HH}  { {\mathbf{H}} }

\newcommand {\UU}  { {\mathbf{U}} }

\newcommand {\PP}  { {\mathbf{P}} }

\newcommand {\bSS}  { {\mathbf{S}} }
\newcommand {\JJ}  { {\mathbf{J}} }

\newcommand {\ZZ}  { {\mathbf{Z}} }
\newcommand {\bOmega}  { {\mathbf{\Omega}} }

\newcommand {\bSigma}  { {\mathbf{\Sigma}} }
\newcommand{\eye}{\mathbf{I}}

\newcommand {\bb}  { {\bf b} }
\newcommand {\cc}  { {\bf c} }
\newcommand {\dd}  { {\bf d} }

\newcommand {\bgg}  { {\bf g} }

\newcommand {\yy}  { {\bf y} }

\newcommand {\rr}  { {\bf r} }
\newcommand {\uu}  { {\bf u} }
\newcommand {\qq}  { {\bf q} }
\newcommand {\pp}  { {\bf p} }

\newcommand {\vv}  { {\bf v} }
\newcommand {\ww}  { {\bf w} }
\newcommand {\xx}  { {\bf x} }
\newcommand {\zz}  { {\bf z} }

\newcommand {\blambda} {\bm \lambda}

\newcommand {\bbeta} {\bm \beta}

\newcommand {\bomega} {\bm \omega}
\newcommand {\zero}  { {\bf 0} }

\newcommand {\Rank}  { {\textnormal{Rank}} }

\newcommand {\Range}  { \sR }
\newcommand {\Null}  { \sN }
\newcommand {\Span}  { {\textnormal{Span}} }

\newcommand {\diag}  { {\textnormal{diag}} }

\newcommand{\HHt}{\tilde{\HH}}
\newcommand{\HHb}{\bar{\HH}}

\newcommand{\bbgg}{\bar{\bgg}}

\newcommand{\vvec}[2]{
\begin{pmatrix} #1 \\ #2 \end{pmatrix}
}

\makeatletter
\newcommand*{\transpose}{%
	{\mathpalette\@transpose{}}%
}
\newcommand*{\@transpose}[2]{%
	\raisebox{\depth}{$\m@th#1\intercal$}%
}
\makeatother

\newcommand {\tHH}  { {\tilde{\HH}} }

\newcommand {\yytt}  { {\yy^{(t)}} }
\newcommand {\yyttm}  { {\yy^{(t-1)}} }

\newcommand {\tPP}  { {\tilde{\PP} }}

\renewcommand{\vec}[1]{\ensuremath{\mathbf{#1}}}
\newcommand{\grad}{\ensuremath {\vec \nabla}}
\newcommand{\gradx}{\ensuremath {\vec \nabla}_\xx}

\newcommand{\defeq}{\triangleq}

\definecolor{forestgreen}{rgb}{0.13, 0.55, 0.13}

\definecolor{amber}{rgb}{1.0, 0.75, 0.0}

\definecolor{bananayellow}{rgb}{.8, 0.6, 0}

\newcounter{comment}

\usepackage{amsthm}
\usepackage[framemethod=TikZ]{mdframed}

\newmdtheoremenv[%
linewidth = 1pt,%
roundcorner = 10pt,%
leftmargin = 0,%
rightmargin = 0,%
backgroundcolor = green!3,%
outerlinecolor = blue!70!black,%
innertopmargin = \topskip,%
innerbottommargin=\topskip,%
splittopskip = \topskip,%
skipabove=\baselineskip,
skipbelow=\baselineskip,
]{theorem}{Theorem}

\newmdtheoremenv[%
linewidth = 1pt,%
roundcorner = 10pt,%
leftmargin = 0,%
rightmargin = 0,%
backgroundcolor = green!3,%
outerlinecolor = blue!70!black,%
innertopmargin = \topskip,%
innerbottommargin=\topskip,%
splittopskip = \topskip,%
skipabove=\baselineskip,
skipbelow=\baselineskip,
]{corollary}{Corollary}

\newmdtheoremenv[%
linewidth = 1pt,%
roundcorner = 10pt,%
leftmargin = 0,%
rightmargin = 0,%
backgroundcolor = green!3,%
outerlinecolor = blue!70!black,%
innertopmargin = \topskip,%
innerbottommargin=\topskip,%
splittopskip = \topskip,%
skipabove=\baselineskip,
skipbelow=\baselineskip,
]{lemma}{Lemma}

\newmdtheoremenv[%
linewidth = 1pt,%
roundcorner = 10pt,%
leftmargin = 0,%
rightmargin = 0,%
backgroundcolor = blue!3,%
outerlinecolor = blue!70!black,%
innertopmargin = \topskip,%
innerbottommargin=\topskip,%
splittopskip = \topskip,%
skipabove=\baselineskip,
skipbelow=\baselineskip,
]{definition}{Definition}

\newmdtheoremenv[%
linewidth = 1pt,%
roundcorner = 10pt,%
leftmargin = 0,%
rightmargin = 0,%
backgroundcolor = green!3,%
outerlinecolor = blue!70!black,%
innertopmargin = \topskip,%
innerbottommargin=\topskip,%
splittopskip = \topskip,%
skipabove=\baselineskip,
skipbelow=\baselineskip,
]{proposition}{Proposition}

\newmdtheoremenv[%
linewidth = 1pt,%
roundcorner = 10pt,%
leftmargin = 0,%
rightmargin = 0,%
backgroundcolor = green!3,%
outerlinecolor = blue!70!black,%
innertopmargin = \topskip,%
innerbottommargin=\topskip,%
splittopskip = \topskip,%
skipabove=\baselineskip,
skipbelow=\baselineskip,
]{condition}{Condition}

\crefname{condition}{Condition}{Conditions}
\Crefname{condition}{Condition}{Conditions}
\newlist{conditems}{enumerate}{1}
\setlist[conditems,1]{label=(\roman*), ref=\thecondition-(\roman*)}
\crefname{conditemsi}{Condition}{Conditions}
\Crefname{conditemsi}{Condition}{Conditions}

\newlist{assitems}{enumerate}{1}
\setlist[assitems,1]{label=(\roman*), ref=\theassumption-(\roman*)}
\crefname{assitemsi}{Assumption}{Assumptions}
\Crefname{assitemsi}{Assumption}{Assumptions}

\newmdtheoremenv[%
linewidth = 1pt,%
roundcorner = 10pt,%
leftmargin = 0,%
rightmargin = 0,%
backgroundcolor = yellow!3,%
outerlinecolor = blue!70!black,%
innertopmargin = \topskip,%
innerbottommargin=\topskip,%
splittopskip = \topskip,%
skipabove=\baselineskip,
skipbelow=\baselineskip,
]{assumption}{Assumption}

\theoremstyle{definition}
\newmdtheoremenv[%
linewidth = 1pt,%
roundcorner = 10pt,%
leftmargin = 0,%
rightmargin = 0,%
backgroundcolor = cyan!3,%
outerlinecolor = blue!70!black,%
innertopmargin = \topskip,%
innerbottommargin=\topskip,%
splittopskip = \topskip,%
skipabove=\baselineskip,
skipbelow=\baselineskip,
]{example}{Example}

\theoremstyle{definition}
\newmdtheoremenv[%
linewidth = 1pt,%
roundcorner = 10pt,%
leftmargin = 0,%
rightmargin = 0,%
backgroundcolor = red!3,%
outerlinecolor = blue!70!black,%
innertopmargin = \topskip,%
innerbottommargin=\topskip,%
splittopskip = \topskip,%
skipabove=\baselineskip,
skipbelow=\baselineskip,
]{remark}{Remark}

\newmdtheoremenv[%
linewidth = 1pt,%
roundcorner = 10pt,%
leftmargin = 0,%
rightmargin = 0,%
backgroundcolor = gray!3,%
outerlinecolor = blue!70!black,%
innertopmargin = \topskip,%
innerbottommargin=\topskip,%
splittopskip = \topskip,%
skipabove=\baselineskip,
skipbelow=\baselineskip,
]{fact}{Fact}

\usepackage{tikz}
\usepackage{xparse}

\NewDocumentCommand\DownArrow{O{2.0ex} O{black}}{%
	\mathrel{\tikz[baseline] \draw [<-, line width=0.5pt, #2] (0,0) -- ++(0,#1);}
}

\usepackage{listings} 

\definecolor{mygreen}{rgb}{0,0.6,0}
\definecolor{mygray}{rgb}{0.5,0.5,0.5}
\definecolor{mymauve}{rgb}{0.58,0,0.82}

\floatstyle{ruled}
\newfloat{terminationblock}{t}{lotb}
\floatname{terminationblock}{Termination Block}

\NewDocumentEnvironment{terminationblockalg}{O{t} m o}
{
    \begin{terminationblock}[#1]
    \caption{#2}
    \IfValueT{#3}{\label{#3}}
    \begin{algorithmic}[1]
}
{
    \end{algorithmic}
    \end{terminationblock}
}
\crefname{terminationblock}{termination block}{termination blocks}
\Crefname{terminationblock}{Termination Block}{Termination Blocks}

\newcommand*\dotprod[1]{\left\langle #1\right\rangle}
\newcommand*\vnorm[1]{\left\| #1\right\|}

\newcommand*\bigO[1]{\mathcal O\left( #1\right)}

\usepackage{dsfont}


\title{Primal-Dual Inexact Newton-MR for Nonconvex Optimization with Equality Constraints}

\author{
Oscar Smee\textsuperscript{$\dagger$*}
\hspace{2.5em}
Fred Roosta\textsuperscript{$\dagger$}
}

\date{September 9, 2026}

\begin{document}

\maketitle

\begingroup
\renewcommand{\thefootnote}{$\dagger$}
\footnotetext{School of Mathematics and Physics, The University of Queensland, Brisbane, Australia.}
\endgroup

\begingroup
\renewcommand{\thefootnote}{*}
\footnotetext{Corresponding author: \texttt{oscar.smee@uq.edu.au}.}
\endgroup

\section*{Abstract}

Optimization problems with nonlinear equality constraints arise throughout science, engineering, and increasingly in machine learning. Prominent methods for solving such problems include sequential quadratic programming and, more broadly, primal-dual Newton methods. Classical analyses of these methods typically rely on strong assumptions, perhaps most notably positive definiteness of the Lagrangian Hessian on the null space of the constraint Jacobian. In practice, this assumption often necessitates strong regularization or the use of a positive definite Hessian surrogate. Moreover, in large-scale settings, solving the primal-dual Newton subproblem exactly is often computationally infeasible. To address these issues, we propose an inexact primal-dual Newton method with an inner solver based on the conjugate residual (CR) method. Exploiting recently established properties of CR, including negative-curvature detection, iterate monotonicity, and descent guarantees, our method handles indefiniteness in the subproblem directly as it arises. Our method thereby avoids detrimental regularization of the Lagrangian Hessian while naturally accommodating inexact solves. We establish worst-case global convergence guarantees and demonstrate strong empirical performance on large-scale, nonconvex problems.

\section{Introduction}

The problem we consider is
\begin{align}
\min_{\xx \in \real^d} \quad & f(\xx) \quad \text{s.t.} \quad  \cc(\xx) = 0,
\label{eqn:equality constrained problem}
\end{align}
where $f : \real^d \to \real$ is a twice continuously differentiable objective function and $\cc : \real^d \to \real^c$ is a twice continuously differentiable constraint mapping representing $c$ equality constraints. Problems of the form \cref{eqn:equality constrained problem} arise throughout science, engineering, and machine learning, where an objective must be optimized subject to physical laws or structural constraints that must be satisfied exactly. Examples include physics-informed machine learning \citep{raissi_physics_informed_2019,basir_physics_2022,basir_investigating_2023,son_enhanced_2023}, optimization layers in neural networks \citep{amosOptNet2017}, PDE-constrained optimization \citep{hinze2008optimization}, and engineering design and control \citep{hinze2008optimization,bieglerNonlinearProgramming2010,rao2019engineering}. Our focus is on developing computationally feasible algorithms for the modern large-scale setting in which the number of the decision variable is very large, i.e., $d \gg 1$.

Identifying global solutions of \cref{eqn:equality constrained problem} is typically computationally intractable. Therefore, a more appropriate algorithmic target is an approximate local solution, as characterized by first-order necessary conditions. Recall that the Lagrangian associated with \cref{eqn:equality constrained problem} is given by
\[
\sL(\xx, \blambda) \defeq f(\xx) + \dotprod{\blambda, \cc(\xx)},
\]
where $\blambda \in \real^c$ denotes the vector of Lagrange multipliers. 
Under an appropriate constraint qualification, any local solution of \cref{eqn:equality constrained problem} must be a first-order stationary point of the Lagrangian \citep{nocedal_numerical_2006}: 
Accordingly, for a pair of tolerances $(\epsc,\epsp)\in(0,1)^2$, we seek a pair $(\xx,\blambda)$ satisfying the \emph{$(\epsc,\epsp)$-stationarity condition}
\begin{align*}
    \vnorm{\cc(\xx)} \leq \epsc
    \qquad \text{and} \qquad
    \vnorm{\Lgrad(\xx, \blambda)} \leq \epsp.
    \tageq\label{eqn:epsilon KKT stationary point}
\end{align*}
The first condition in \cref{eqn:epsilon KKT stationary point} captures feasibility, while the second controls primal stationarity.

The primal-dual Newton approach to identify stationary points of the Lagrangian involves applying Newton's method directly to the Lagrangian stationary point equation, generating the so-called Karush-Kuhn-Tucker (KKT) system defined by
\begin{align*}
    \begin{pmatrix}
        \LHess(\xx, \blambda) & \JJ^\transpose(\xx)\\
        \JJ(\xx) & 0 
    \end{pmatrix}
    \begin{pmatrix}
        \pp \\ \qq
    \end{pmatrix}
    =
    -\begin{pmatrix}
        \Lgrad(\xx, \blambda) \\ 
        \cc(\xx)
    \end{pmatrix}, \tageq\label{eqn:KKT-system}
\end{align*}
where $\pp$ and $\qq$ are the primal and dual updates, respectively, and $\JJ:\real^d \to \real^{c \times d}$, is the Jacobian of the constraint function. From \cref{eqn:KKT-system} we can iteratively update the primal-dual variables. While the system \cref{eqn:KKT-system} need not admit a solution in general, it has a unique solution under the following regularity condition.

\begin{condition}[KKT Regularity Condition]
\label{cond:KKT-Regularity}
\hfill
\begin{conditems}
    
    \item \label{cond:kkt:full-rank-Jacobian}
    The Jacobian matrix of the constraints, $\JJ(\xx)$, has full row rank.
    
    \item \label{cond:kkt:positive-definite-Lagrangian-Hessian}
    The Hessian of the Lagrangian, $\LHess(\xx,\blambda)$, is positive definite on the tangent space of the constraints, i.e., 
    \[
        \dotprod{\dd,\nabla_{\xx\xx}^2 \sL(\xx,\blambda)\dd} > 0,
        \qquad
        \forall \dd \neq 0
        \quad \text{such that} \quad
        \JJ(\xx)\dd = 0.
    \]
\end{conditems}
\end{condition}
Additionally, under \cref{cond:KKT-Regularity}, the primal update in \cref{eqn:KKT-system} is the unique global minimizer of the subproblem associated with the celebrated sequential quadratic programming (SQP) method. This subproblem combines a quadratic model of the objective with a linearization of the constraints: 
\begin{align*}
    \min_{\pp} \quad 
    \dotprod{\gradf(\xx),\pp}
    +
    \frac12
    \dotprod{\pp,\LHess(\xx,\blambda)\pp}
    \qquad
    \text{s.t.}
    \qquad
    \JJ\pp = -\cc.
    \tageq \label{eqn:SQP}
\end{align*}
The associated dual update is recovered from the Lagrange multipliers of \cref{eqn:SQP}. 
Under \cref{cond:KKT-Regularity}, SQP enjoys strong local convergence guarantees \citep{nocedal_numerical_2006}. 
Moreover, SQP methods can be globalized using either trust-region or line-search strategies \citep{nocedal_numerical_2006,sunOptimizationTheoryMethods2006a,connTrustRegionMethods2000,andreiModernNumericalNonlinear2022}. In this paper, we focus specifically on the line-search framework.

\paragraph{Key Limitation of Prior Works.}
Unfortunately, while \cref{cond:KKT-Regularity} may hold locally around a solution satisfying suitable regularity assumptions, it is generally unrealistic to expect this condition to hold globally. In particular, \cref{cond:kkt:positive-definite-Lagrangian-Hessian} cannot be guaranteed even in the ``best-case'' scenario where both the objective and constraint Hessians are positive definite, since the multiplier vector $\blambda$ may contain entries of arbitrary sign. Consequently, the Lagrangian Hessian $\nabla_{\xx\xx}^2 \sL(\xx,\blambda)$  may still be indefinite.
For this reason, many SQP methods replace the Lagrangian Hessian with a positive definite surrogate, such as a quasi-Newton approximation \citep{nocedal_numerical_2006,gillSNOPTSQPAlgorithm2005,gillSequentialQuadraticProgramming2010}, or heavily regularize the Hessian whenever negative curvature is detected (e.g., \citep{byrdInexactNonconvexNewtonMethod2010,curtisMatrixFreeAlgorithmEquality2010}). However, both strategies substantially alter the curvature information encoded in the true Lagrangian Hessian, which can significantly degrade the performance of second-order methods \citep{lim_complexity_2024}.
Indeed, such SQP variants can be viewed as closer to first-order or ``1.5th-order'' methods than to fully second-order algorithms.

\subsection{Our Method and Contributions.}
In this work we develop a \emph{fully second-order} method that makes direct use of the Lagrangian Hessian, even in the absence\footnote{In a slight abuse of terminology and following \citep{byrdInexactNonconvexNewtonMethod2010}, we will generally refer to the failure of \cref{cond:kkt:positive-definite-Lagrangian-Hessian} as `non-convexity'.} of \cref{cond:kkt:positive-definite-Lagrangian-Hessian}. Rather than modifying the Lagrangian Hessian through regularization or positive definite approximations, our approach is built around a reformulation of the primal-dual step computation that allows the curvature of the true Hessian to be exploited directly. The resulting framework combines classical primal-dual structure with an inexact Krylov inner solve, yielding a method that is both computationally practical in the large-scale setting and benefits from strong global convergence guarantees under standard assumptions. 

Our framework builds on the classical primal-dual Newton paradigm: at each iteration a primal search direction is obtained by approximately solving the KKT system and the resulting step is globalized using a merit function and line search. Following a standard approach to SQP (see, e.g., \citep[Chapter 18]{nocedal_numerical_2006}), we decompose the primal step into tangent and normal components.
The normal component primarily reduces the violation of the linearized constraints, whereas the tangent component improves the objective while maintaining linearized feasibility. 

Applying this decomposition, alongside a series of simplifications, results in two structured least-squares problems corresponding to the normal and tangent components. The normal step may be computed either directly or indirectly, depending on whether the number of constraints is small or large relative to the problem dimension. In addition to restoring feasibility, solving the normal problem reveals the null space of the Jacobian, which provides the structural information required for the tangent computation. The tangent step then reduces to a least-squares system involving the Lagrangian Hessian and gradient restricted to this null space. Remarkably, this tangent step formulation implicitly targets least-squares multipliers, providing additional flexibility in the choice of dual update; in this sense, the resulting algorithm can be viewed as essentially \textit{primal only}.

The central ingredient of our method is the observation that the tangent computation can be performed \textit{inexactly} using a projected conjugate residual (CR) iteration, which is closely related to the minimum residual (MINRES) algorithm \cite{limConjugateDirectionMethods2024}. Recent work \citep{liu_minres_2022,liu_obtaining_2024} has shown that CR/MINRES possess powerful properties when applied to optimization problems, including reliable detection of negative curvature and strong descent guarantees. These properties have recently been incorporated into several optimization algorithms for nonconvex problems \citep{smee_inexact_2024,lim_complexity_2024,liu_newton-mr_2023,roosta_newton-mr_2022,Smee2025FirstishOM}. 
Importantly, these results effectively ``open the black box'' of the inner Krylov solver, allowing the behavior of the iterative method to be incorporated directly into the analysis of the outer primal-dual scheme. By exploiting these properties, our method can handle an indefinite Lagrangian Hessian without resorting to costly regularization or positive definite approximations. Moreover, the Krylov framework naturally accommodates inexact solutions of the tangent subproblem, allowing the inner iteration to terminate early once easy-to-verify and intuitive termination conditions are satisfied. The resulting search directions are incorporated into a standard $\ell_2$ merit function framework with backtracking line search, ensuring global convergence. The result is our primal-dual Newton-MR method which can handle both nonconvexity and inexactness out-of-the-box. 

\paragraph{Contributions.}
Under standard smoothness and regularity assumptions commonly used in the SQP literature, we establish that the proposed line-search primal-dual Newton-MR method identifies a point satisfying \cref{eqn:epsilon KKT stationary point} within $\sO(\max\{\epsp^{-2}, \epsc^{-1}\})$ iterations. Our analysis allows the Lagrangian Hessian to be indefinite and accommodates inexact computation of the tangent step via projected Krylov iterations, while only accessing Hessian-vector products. To the best of our knowledge, this is the first iteration complexity result for a line-search SQP-type method that simultaneously permits both inexact inner solves and nonconvex Lagrangian curvature. Moreover, we demonstrate that our method performs effectively in practice on large-scale nonconvex optimization problems with highly nonlinear constraints. We summarize our contribution in comparison to the recent literature in \cref{tab:sqp-summary}.

\begin{table}[ht]
\centering
\caption{Summary of recent line-search SQP literature. Under the ``Nonconvexity'' column, ``\cmark'' indicates that the method can handle nonconvexity without any modification or regularization, while ``\cmark\textsuperscript{*}'' indicates that the method relies on an explicit regularization scheme. For an overview of the complexity guarantees obtained by line-search SQP methods we refer the reader to our literature review in \cref{sec:literature review}.}
\label{tab:sqp-summary}
\begin{tabular}{lcccc}
\toprule
\textbf{Paper} & \textbf{Nonconvexity} & \textbf{Inexactness}  & \textbf{Complexity guarantees}  \\
\midrule
\textbf{This Work}  & \cmark  & \cmark &  \cmark   \\
\midrule
\citep{curtis_inexact_2021,berahasModifiedLineSearch2024}   & \xmark & \cmark & \xmark   \\
\midrule
\citep{curtisWorstCaseComplexitySQP2022,berahasSequentialQuadraticOptimization2020,oneill_two_2024,berahasSequentialQuadraticProgramming2025}   & \xmark & \xmark & \cmark   \\
\midrule
\citep{byrdInexactNonconvexNewtonMethod2010,curtisMatrixFreeAlgorithmEquality2010}   & \cmark\textsuperscript{*} & \cmark & \xmark   \\
\bottomrule
\end{tabular}
\end{table}

\paragraph{Outline.} 
The remainder of this paper is organized as follows. We conclude this section by introducing notation and reviewing the relevant literature. In \cref{sec:building blocks}, we lay out the basic building blocks of our algorithm, including the reformulation of the primal-dual Newton system into tangent and normal subproblems and the properties of the CR inner solver used for the tangent problem. We then discuss the globalization of the method through a merit function and line search. In \cref{sec:direct algorithm statement}, we present the proposed algorithm for the case where the normal component subproblem can be solved directly via a factorization, and establish convergence guarantees for this direct variant. In \cref{sec:indirect normal approach}, we extend the method to the large-constraint setting where the Jacobian cannot be factorized. Using inexact normal steps and approximate null-space projections that require only Jacobian-vector products, we show that similar convergence guarantees can be obtained in this indirect case. Finally, \cref{sec:numerical} reports numerical experiments demonstrating strong performance on large-scale nonconvex problems.

\subsection{Notation}

Let $\AA^\dagger$ denote the Moore-Penrose pseudoinverse and $\AA^\ddagger = (\AA^\dagger)^\transpose = (\AA^\transpose)^\dagger$. For a square matrix $\AA$ and vector $\bb$, we denote the $t$-th Krylov subspace as $\sK_t(\AA, \bb) \defeq \Span\{\bb, \AA \bb, \ldots,\AA^{t-1}\bb \}$. Define the grade of $\AA$ with respect to $\bb$ as the integer $g(\bb, \AA)$ such that 
\begin{align*}
    \dim(\sK_t(\AA, \bb)) = \min\{t, g(\bb, \AA)\}.
\end{align*}
We denote the null space of the matrix $\AA$ by $\Null(\AA)$ and the range space by $\Range(\AA)$. For a subspace, $\sV$, we denote the orthogonal projector onto $\sV$ by $\PP_\sV$, e.g., $\PPNJxx$ denotes the orthogonal projector onto the null space of $\JJ(\xx)$. The norm $\vnorm{\cdot}$ denotes the Euclidean norm for a vector input and the operator norm for a matrix. For a positive definite matrix $\AA$, we denote the $\AA$-inner product as $\dotprod{\cdot, \cdot}_\AA$. Meanwhile, for positive semidefinite $\AA$ we denote the $\AA$-seminorm as $\vnorm{\cdot}_\AA = \sqrt{\dotprod{\cdot, \AA \cdot}}$.

When context is clear, we will often suppress the dependence of certain functions on their arguments, for example, we will often write $\JJ(\xx)$ as $\JJ$ and $\JJ(\xx_k)$ as $\JJ_k$. In general, we use bracketed superscripts to denote quantities arising from inner iterations and subscripts to denote quantities arising from outer iterations.

A key tool in our analysis is the economy SVD of $\JJ$, which we write as
\begin{align*}
    \JJ = \UU \bSigma \bOmega^\transpose, \tageq\label{eqn:SVD of jacobian}
\end{align*}
where $r= \Rank{(\JJ)}$, $\UU \in \real^{c \times r}$, $\bOmega \in \real^{d \times r}$ are orthogonal matrices, and $\bSigma = \diag(\sigma_1, \ldots, \sigma_r) \in \real^{r \times r}$ is a diagonal matrix of nonzero singular values $0<\sigma_r\leq \ldots\leq \sigma_1$. 
Finally, for a nonnegative integer $K$, we use square brackets to denote the index set of integers up to $K$, i.e.,
\[
[K] \defeq \{0,1,\ldots,K\}.
\]

\subsection{Literature Review} \label{sec:literature review}

SQP methods are among the most effective approaches for solving nonlinear programs with equality constraints \citep{boggsSQP1995,nocedal_numerical_2006}. Classical SQP methods \citep{hanGloballyConvergentMethod1977,hanSuperlinearlyConvergentVariable1976,powellFastNonlinearConstrained1978,palomaresSuperlinearlyConvergentQuasinewton1976} compute a primal update by solving a quadratic subproblem based on the linearized constraints and a second-order model of the Lagrangian. Global convergence is obtained either through trust-region frameworks \citep{connTrustRegionMethods2000}, a filter method \citep{fletcherNonlinearProgrammingPenalty2002}, or through merit functions combined with line-search procedures \citep{nocedal_numerical_2006}. While trust-region SQP methods have received significant attention due to their inherent ability to handle nonconvexity, line-search variants remain widely used in practice due to their conceptual simplicity and strong empirical performance \citep{nocedal_numerical_2006,curtisWorstCaseComplexitySQP2022}.

A number of works have considered SQP methods in the large-scale setting, where the Newton system arising in the step computation is solved only approximately; see, for example, \citep{byrdInexactSQPMethod2008,byrdInexactNonconvexNewtonMethod2010,curtisMatrixFreeAlgorithmEquality2010,laleeImplementationAlgorithmLargeScale1998,gillSNOPTSQPAlgorithm2005} and more recent developments in \citep{curtis_inexact_2021,berahasModifiedLineSearch2024}. In these approaches, the inner linear solver is typically treated as a ``black-box" and is assumed to produce sufficiently accurate approximations of the Newton step. The inner iterations are therefore terminated once the update satisfies a number of termination conditions derived from models of the objective and constraints. As a result, the termination conditions for the inner solver are often unintuitive and difficult to implement. 

In practical implementations the linear systems that arise from the SQP subproblems are usually solved using Krylov subspace methods such as CG \cite{curtis_inexact_2021}, GMRES \cite{byrdInexactSQPMethod2008,byrdInexactNonconvexNewtonMethod2010}, or MINRES \cite{curtis_inexact_2021,berahasOptimisticNoiseAwareSequential2025,curtisMatrixFreeAlgorithmEquality2010}. However, the theoretical analyses of these SQP frameworks generally do not exploit the specific properties of these iterative solvers, treating them instead as generic linear algebra routines. Consequently, the behavior of the inner Krylov iterations remains largely disconnected from the analysis of the outer optimization algorithm.

In recent years, there has been increasing interest in establishing worst-case complexity guarantees for optimization algorithms. Such results provide theoretical bounds on the number of iterations or function evaluations required to reach approximate optimality and have become an important tool for comparing algorithms; see, for example, the foundational results of \citet{nesterovIntroductoryLecturesConvex2004,nesterovLecturesConvexOptimization2018} and the modern survey of evaluation complexity results in \citet{cartisEvaluationComplexityAlgorithms2022}. Motivated by these advances, a growing body of work has sought to establish similar guarantees for constrained optimization methods.

For general nonlinear programs, several frameworks have been shown to attain worst-case complexity guarantees. These include adaptive regularization and two-phase algorithms \citep[Chapters~6-7]{cartisEvaluationComplexityAlgorithms2022}, as well as related approaches analyzed in \citet{birginEvaluationComplexityNonlinear2016}. While these methods enjoy strong theoretical guarantees, they often require the solution of relatively expensive subproblems at each iteration, and in the case of two-phase approaches, treat feasibility and optimality separately rather than addressing the objective and constraints simultaneously. Other approaches with complexity guarantees include inexact restoration methods \citep{buenoComplexityInexactRestoration2020}, which separate feasibility and optimality phases, augmented Lagrangian frameworks \citep{xie_complexity_2020,grapigliaComplexityAugmentedLagrangian2021}, and a ``trust-funnel" method \citep{curtisComplexityAnalysisTrust2018} inspired by trust-region SQP. 

In contrast, complexity results for line-search SQP methods are relatively scarce, with most existing analyses focusing on asymptotic convergence properties. Only recently have worst-case complexity guarantees begun to appear for line-search-style SQP frameworks \citep{curtisWorstCaseComplexitySQP2022,berahasSequentialQuadraticProgramming2025,oneill_two_2024}. In the deterministic setting, these methods obtain complexity guarantees of order $\sO(\max\{\epsp^{-2},\epsc^{-1}\})$ for identifying an approximate first-order point of the form \cref{eqn:epsilon KKT stationary point}. The dependence on $\epsp$ matches the classical lower bounds for first-order methods and Newton's method applied to unconstrained nonconvex optimization \citep{cartisEvaluationComplexityAlgorithms2022}. 
To the best of our knowledge, no existing result provides a worst-case complexity guarantee for an SQP-type method that simultaneously allows for inexact step computation and nonconvex Lagrangian curvature. This work provides such a result.

The recent sequential \emph{cubic} programming method of \citet{dimou2026sequential} improves the deterministic first-order stationarity rate to $\sO(\max\{\epsp^{-3/2},\epsc^{-1}\})$
and also provides guarantees for convergence to approximate second-order stationary points of \cref{eqn:equality constrained problem}. This improved dependence on $\epsp$ is obtained by replacing the quadratic SQP subproblem with a cubic-regularized tangential subproblem, together with second-order correction steps. However, this comes at the cost of solving a more challenging cubic subproblem subject to linear constraints at each iteration. For a detailed survey of complexity rates in equality-constrained optimization, we refer the reader to the literature review in \citet{dimou2026sequential}.

Another recent trend in the SQP literature is consideration of the stochastic setting, where the objective oracle provides noisy estimates of function, gradient and Hessian values \citep{naAdaptiveStochasticSequential2022,berahasRetrospectiveApproximationSequential2025,berahasAdaptiveSamplingSequential2023,berahasOptimisticNoiseAwareSequential2025,naStatisticalInferenceConstrained2025,curtis_inexact_2021,berahasSequentialQuadraticOptimization2020,berahasModifiedLineSearch2024,berahasSequentialQuadraticProgramming2025}. Related complexity guarantees have also been developed for stochastic SQP methods \citep{oneill_two_2024}, where the use of noisy objective information leads to different stationarity measures and complexity rates.
Our work focuses on the deterministic setting, we leave the extension of our algorithm to the stochastic case for future work.

\section{Basic Building Blocks of Our Framework} \label{sec:building blocks}

In this section, we present the basic building blocks of the algorithmic framework underlying our proposed methods.
We begin in \cref{sec:step derivation} by reformulating the primal-dual Newton system via a decomposition of the step into tangent and normal components as a structured least-squares problem.
To compute the tangent component, we employ the CR algorithm. In particular, \cref{sec:CR algorithm background} introduces the (projected) CR method, which serves as the core inner solver for the tangent-space subproblem.
Finally, \cref{sec:merit} reviews the merit function and the line-search-based globalization strategy we consider in this work.

\subsection{Primal Step Derivation} \label{sec:step derivation}
Our starting point is a reformulation of the KKT system \cref{eqn:KKT-system} as the least-squares problem
\begin{align*}
    \min_{\pp,\qq}
    \;
    \vnorm{
        \begin{pmatrix}
            \LHess & \JJ^\transpose \\
            \JJ & 0
        \end{pmatrix}
        \vvec{\pp}{\qq}
        +
        \vvec{\Lgrad}{\cc}
    }^2
    =
    \min_{\pp,\qq}
    \;
    \vnorm{
        \LHess \pp + \JJ^\transpose \qq + \Lgrad
    }^2
    +
    \vnorm{
        \JJ\pp + \cc
    }^2.
    \tageq\label{eqn:KKT-least-squares}
\end{align*}
Unlike the linear system formulation in \cref{eqn:KKT-system}, which need not admit a solution, the least-squares problem \cref{eqn:KKT-least-squares} always admits at least one minimizer, regardless of whether \cref{cond:KKT-Regularity} holds.
Eliminating the dual variable $\qq$ yields a reduced primal-only formulation. Indeed, the optimal multiplier satisfies
\[
    \qq^\star
    =
    -\JJ^\ddag(\LHess\pp + \Lgrad).
\]
Substituting this expression into \cref{eqn:KKT-least-squares}, and using the symmetry of $\JJ^\dagger \JJ$, gives
\begin{align*}
    \LHess\pp
    -
    \JJ^\transpose\bigl(\JJ^\ddag(\LHess\pp + \Lgrad)\bigr)
    +
    \Lgrad
    &=
    (\eye - \JJ^\dagger \JJ)(\LHess\pp + \Lgrad).
\end{align*}
Recognizing
\[
    \PP_{\Null(\JJ)}
    =
    \eye - \JJ^\dagger \JJ,
\]
as the orthogonal projector onto $\Null(\JJ)$, the least-squares problem reduces to
\begin{align*}
    \min_{\pp}
    \;
    \vnorm{
        \PP_{\Null(\JJ)}
        (\LHess\pp + \Lgrad)
    }^2
    +
    \vnorm{
        \JJ\pp + \cc
    }^2.
\end{align*}

Motivated by classical SQP approaches \citep{nocedal_numerical_2006,byrdTrustRegionAlgorithm1987,omojokun1989trust}, we decompose the primal step into tangent and normal components, $\pp = \ww + \vv$,
where $\ww \in \Null(\JJ)$ and $\vv \in \Range(\JJ^\transpose)$. 
Substituting this decomposition into the reduced least-squares formulation yields
\begin{align*}
    \min_{\substack{
        \vv \in \Range(\JJ^\transpose) \\
        \ww \in \Null(\JJ)
    }}
    \;
    \vnorm{
        \PP_{\Null(\JJ)}
        \bigl(
            \LHess(\vv+\ww) + \Lgrad
        \bigr)
    }^2
    +
    \vnorm{
        \JJ \vv + \cc
    }^2.
\end{align*}
Following \cite[Chapter 18]{nocedal_numerical_2006}, we approximately decouple the tangent and normal components by neglecting the contribution of $\vv$ in the first term. This leads to the pair of subproblems for the normal and tangent components
\begin{subequations}
\label{eqn:exact formulation}
\begin{align}
    \vv^\star
    &=
    \argmin_{\vv \in \Range(\JJ^\transpose)}
    \;
    \vnorm{
        \JJ \vv + \cc
    },
    \label{eqn:normal component problem}
    \\
    \ww^\star
    &=
    \argmin_{\ww \in \Null(\JJ)}
    \;
    \vnorm{
        \PP_{\Null(\JJ)}
        \left(
            \LHess \ww + \Lgrad
        \right)
    }.
    \label{eqn:tangent component problem}
\end{align}
\end{subequations}
The normal and tangent subproblem pair in \cref{eqn:exact formulation} forms the foundation of our approach. When $c \ll d$, it is computationally feasible to solve \cref{eqn:normal component problem} directly via factorization of the constraint Jacobian. Importantly, this factorization simultaneously reveals the structure of $\Range(\JJ^\transpose)$, which in turn enables efficient construction of the null space projector $\PP_{\Null(\JJ)}$ required in the tangent subproblem; see \cref{sec:direct algorithm statement} for details. When $c \lesssim d$, direct factorization may no longer be computationally feasible; we discuss this setting in \cref{sec:indirect normal approach}. For now, we focus on the case where $c \ll d$ and $\PPNJ$ is available.
In contrast to the normal problem \cref{eqn:normal component problem}, \cref{eqn:tangent component problem} corresponds to a large-scale least-squares problem that is generally too expensive to solve exactly in our setting. Consequently, the tangent subproblem must be reformulated in a form amenable to efficient inexact solution by a Krylov subspace method. Developing this reformulation is the focus of the remainder of this section.

We begin by observing that, since $\PP_{\Null(\JJ)}$ is an orthogonal projector, \cref{eqn:tangent component problem} may be interpreted as a positive semidefinite \emph{left preconditioning} of the symmetric system with matrix $\LHess$ and the right-hand side vector $- \Lgrad$.
Following \citet{liu_obtaining_2024}, the associated Krylov subspace formulation for an inexact tangent step can be written as
\begin{align*}
    \ddtt
    =
    \argmin_{\dd}
    \;
    \vnorm{
        \LHess \dd + \Lgrad
    }_{\PP_{\Null(\JJ)}}
    \qquad
    \text{s.t.}
    \qquad
    \dd \in \sK_t\!\left(
        \PP_{\Null(\JJ)}\LHess,
        \PP_{\Null(\JJ)}\Lgrad
    \right),
\end{align*}
where we distinguish the inexact inner iterate, $\ddtt$, from the outer tangent update, $\ww$.
Although this formulation characterizes the desired inexact tangent step, it does not yet coincide with the Krylov subproblem associated with a standard iterative solver. To obtain such a formulation, we further exploit the structure of the null space projector.
Specifically, let
\[
    \PP_{\Null(\JJ)}
    =
    \bSS \bSS^\transpose,
\]
where $\bSS \in \real^{d \times (d-r)}$ has orthonormal columns spanning $\Null(\JJ)$. Then
\[
    \sK_t\!\left(
        \PP_{\Null(\JJ)}\LHess,
        \PP_{\Null(\JJ)}\Lgrad
    \right)
    =
    \bSS
    \sK_t\!\left(
        \bSS^\transpose \LHess \bSS,
        \bSS^\transpose \Lgrad
    \right),
\]
and hence the projected problem is equivalent to the reduced subproblem
\begin{align*}
    \tddtt
    =
    \argmin_{\tdd}
    \;
    \vnorm{
        \tHH \tdd + \tbgg
    }, \qquad \text{s.t.} \qquad \tdd \in \sK_t(\tHH,\tbgg),
    \tageq\label{eqn:inexact tangent component CR subproblem}
\end{align*}
where
\[
    \tHH \defeq \bSS^\transpose \LHess \bSS,
    \qquad
    \tbgg \defeq \bSS^\transpose \Lgrad,
\]
and the full-space iterate is recovered via
\[
    \ddtt = \bSS \tddtt.
\]
The problem in \cref{eqn:inexact tangent component CR subproblem} coincides precisely with the Krylov subproblem underlying the MINRES method \citep{liu_minres_2022,choiMINRESQLPKrylovSubspace2011,paigeSolutionSparseIndefinite1975}, which is applicable to the indefinite linear systems that arise when $\HHt$ fails to be positive definite. Consequently, the tangent step may be computed inexactly using MINRES, which, as discussed in the following section, is effectively equivalent to the CR algorithm \citep{limConjugateDirectionMethods2024} in our setting. 
This observation is central to our framework: it allows us to retain the algorithmic simplicity of CR while simultaneously leveraging powerful theoretical properties of MINRES, which we review in the following section.

\subsection{The Workhorse: Projected Conjugate Residual} \label{sec:CR algorithm background}

The preceding discussion shows that the tangent computation in \cref{eqn:inexact tangent component CR subproblem} can be interpreted as a MINRES Krylov subproblem, while also suggesting a simpler CR implementation. We now make this connection precise and describe the projected CR method used as the inner solver throughout this paper. The key idea is that MINRES provides the residual-minimization viewpoint and curvature-detection properties needed for the analysis, whereas CR provides an equivalent and more transparent implementation in the optimization setting considered here.

To ensure that our results are also applicable to \cref{sec:indirect normal approach}, where the projection onto the null space may be an approximation, we study a slightly more general formulation of \cref{eqn:inexact tangent component CR subproblem}. In particular, let $\HH \in \real^{d \times d}$ be symmetric, $\bgg \in \real^d$, and $\PP \in \real^{d \times d}$ be any orthogonal projection matrix. For analytical purposes only, we write
\[
    \PP = \ZZ \ZZ^\transpose,
\]
where $\ZZ \in \real^{d \times k}$ has orthonormal columns spanning $\Range(\PP)$. Crucially, access to this factorization is not required to perform the projected CR iterations; see \cref{remark:projected CR without factorisation}. Define\footnote{In this section we use the bar, ``$\bar{\cdot}$'', when we need to distinguish reduced quantities from their unreduced counterparts.} the reduced matrix and vector as
\[
    \HHb \defeq \ZZ^\transpose \HH \ZZ,
    \qquad
    \bbgg \defeq \ZZ^\transpose \bgg, \tageq\label{eqn:projected CR reduced Hessian and gradient}
\]
and the corresponding residual minimization problem
\begin{align*}
    \bddtt
    =
    \argmin_{\bdd}
    \;
    \frac12\vnorm{\HHb \bdd + \bbgg}^2,
    \qquad
    \text{s.t.}
    \qquad
    \bdd \in \sK_t(\HHb,\bbgg).
    \tageq\label{eqn:projected CR formulation}
\end{align*}
The application of this formulation to the tangent problem in \cref{eqn:inexact tangent component CR subproblem} with $\ZZ = \bSS$, $\HH = \LHess$, etc., is immediate.

The remainder of this section proceeds as follows. We first recall the formulation of MINRES and the limited positive curvature detection property that makes MINRES useful in nonconvex Newton-type methods. We then formalize the equivalence between MINRES and CR that was used to motivate our implementation. Finally, we state the projected CR method and study the key properties of the algorithm required in the outer convergence analysis.

\paragraph{MINRES Review.} 

The standard MINRES method is recovered from \cref{eqn:projected CR formulation} as the special case $\ZZ = \eye$, $\HHb = \HH$, and $\bbgg = \bgg$. In this case, \cref{eqn:projected CR formulation} is the unreduced MINRES problem
\begin{align*}
    \ddtt
    =
    \argmin_{\dd}
    \;
    \frac12\vnorm{\HH \dd + \bgg}^2,
    \qquad
    \text{s.t.}
    \qquad
    \dd \in \sK_t(\HH,\bgg).
    \tageq\label{eqn:minres subproblem}
\end{align*}
When $\HH$ and $\bgg$ are the Hessian and gradient, respectively, of an objective function, \cref{eqn:minres subproblem} corresponds to the subproblem of an inexact Newton method; see \citep{liu_newton-mr_2023,lim_complexity_2024}. Recent work has highlighted several properties that make MINRES particularly effective as an inner solver for optimization \citep{liu_minres_2022}. A key property is \emph{limited positive curvature} detection. Specifically, suppose that for some $\sigma > 0$, the residuals generated by MINRES
\[
    \rrii \defeq -\HH \ddii - \bgg,
\]
satisfy
\[
    \dotprod{\rrii,\HH \rrii}
    >
    (d+1)\sigma \vnorm{\rrii}^2,
    \qquad
    i = 1,\ldots,t-1.
\]
Then $\HH$ is certified to be $\sigma$-strongly positive definite when restricted to $\sK_t(\HH,\bgg)$ \citep[Theorem 3.3]{liu_minres_2022}. Intuitively, as long as the LPC test is not triggered, the MINRES iterates, $\ddtt \in \sK_t(\HH,\bgg)$, behave as though the underlying optimization problem were locally convex along the generated subspace. On the other hand, when the LPC condition is triggered, the residual, $\rrtt$, itself can serve as a good search direction. Therefore, when MINRES is utilized as an inner solver for inexact Newton methods, reliable search directions can be generated even when $\HH$ is indefinite, without requiring distortionary regularization. See the works in \cite{smee_inexact_2024,lim_complexity_2024,liu_newton-mr_2023} for applications of MINRES with curvature detection to second-order optimization. 

Despite these advantages, MINRES is often viewed as having a less transparent implementation than the simpler CG and CR algorithms. Fortunately, MINRES and CR are essentially equivalent in the optimization setting. The connection with CR is classical when $\HH$ is positive definite. In this case, the CR subproblem associated with the linear system $\HH \dd = -\bgg$ is
\begin{align*}
    \ddtt
    =
    \argmin_{\dd}
    \;
    \frac12
    \dotprod{\dd,\HH\dd}_{\HH}
    +
    \dotprod{\dd,\bgg}_{\HH},
    \qquad
    \text{s.t.}
    \qquad
    \dd \in \sK_t(\HH,\bgg).
    \tageq\label{eqn:CR quadratic formulation}
\end{align*}
The objectives in \cref{eqn:CR quadratic formulation} and \cref{eqn:minres subproblem} differ only by a constant term $\tfrac12\vnorm{\bgg}^2$ and hence MINRES and CR produce equivalent iterates. However, recent work has revealed that this equivalence extends far beyond the positive definite setting. In particular, \citet[Theorem 5]{limConjugateDirectionMethods2024} show that CR and MINRES generate identical iterates until an \emph{unlucky breakdown} where
\[
    \dotprod{\rrtt,\HH \rrtt} = 0.
\]
In optimization applications, the inner iteration is typically terminated as soon as
\[
    \dotprod{\rrtt,\HH \rrtt} \leq 0,
\]
due to the curvature detection procedure outlined previously. Thus, for the purposes of Newton-type optimization methods, CR and MINRES may be regarded as practically equivalent. This observation has motivated a ``second generation'' of Newton-MR methods based on the simpler CR implementation \citep{lim_faithful-newton_2025}. For the remainder of this work, we therefore focus on the equivalent CR interpretation of \cref{eqn:projected CR formulation}.

The preceding discussion applies to the unreduced pair $(\HH,\bgg)$, but it applies verbatim to the reduced pair $(\HHb,\bbgg)$ in \cref{eqn:projected CR reduced Hessian and gradient}. Thus, \cref{eqn:projected CR formulation} may be interpreted either as a reduced MINRES subproblem or, equivalently within the optimization regime described above, as a reduced CR subproblem. The projected method is obtained by running this CR iteration in the reduced coordinates and then lifting either the approximate solution or the detected curvature direction back to the original space through $\ZZ$.

\paragraph{Projected Conjugate Residual.} We now give the details of the projected CR method associated with \cref{eqn:projected CR formulation}. Let
\[
    \brrtt \defeq -\HHb\bddtt - \bbgg,
\]
denote the reduced residual. After verifying that the positive curvature certificate is valid for $\brrzero$ (if not, $\rrzero = \ZZ \brrzero$ is returned), the basic projected CR method proceeds until either the termination condition
\begin{align*}
    \vnorm{\HHb \brrtt}
    \leq
    \tau \vnorm{\HHb \bddtt},
    \tageq\label{eqn:tangent component termination condition}
\end{align*}
is satisfied or limited positive curvature is detected in the reduced residuals, i.e.,
\begin{align*}
    \dotprod{\brrtt, \HHb \brrtt}
    \leq
    (d+1)\sigma \vnorm{\brrtt}^2.
    \tageq\label{eqn:LPC condition}
\end{align*} 
If \cref{eqn:tangent component termination condition} holds, then $\bddtt$ is accepted as an inexact solution of the reduced tangent system. The corresponding lifted direction
\[
    \ddtt = \ZZ \bddtt,
\]
is returned with $\FLAG{} = \SOL{}$. Alternatively, if \cref{eqn:LPC condition} is triggered first, the positive-curvature certificate no longer holds for the next iteration, and we terminate. In this case the lifted residual
\[
    \rrtt = \ZZ \brrtt,
\]
is returned as the search direction with $\FLAG{} = \LPC{}$. In either case, the returned direction lies in $\Range(\PP)$ (i.e., $\Null(\JJ)$ if $\PP = \PPNJ$) by construction. This termination scheme is detailed in \Cref{term:direct}.

We state the projected CR iteration in \cref{alg:conjugate residual}. The CR update is written in a compact form using both $\HHb\yyttm$ and $\HHb\brrttm$. In practice, however, $\HHb\yy_t$ can be updated as
\[
    \HHb\yytt = \HHb\brrtt + \beta_{t-1}\HHb\yyttm,
\]
so that, as with CG and MINRES, each CR iteration requires only one Hessian-vector product. We leave the termination block in \cref{alg:conjugate residual} flexible, since we vary the termination condition for the direct (\Cref{term:direct}) and indirect (\Cref{term:indirect}) cases.

\begin{algorithm}[ht]
\renewcommand{\baselinestretch}{1.25}\selectfont 
\begin{algorithmic}[1]
    \Require{$\HH \in \real^{d \times d}$, $\bgg \in \real^d$, an orthogonal projection matrix $\PP \in \real^{d\times d}$, parameters for \Cref{term:direct} or \Cref{term:indirect}.}
    \State $\PP = \ZZ \ZZ^\transpose$, $\HHb = \ZZ^\transpose \HH \ZZ$ and $\bbgg = \ZZ^\transpose \bgg$. \Comment{Factorization is not required in practice; see \cref{remark:projected CR without factorisation}.}
    \State Set $\bddzero = 0$, $\brrzero = \yyzero = -\bbgg$ and $t = 0$.
    \If{\cref{eqn:LPC condition} holds} \Comment{Certify the first Krylov subspace.}
        \State \Return{$(\rrzero, \FLAG=\LPC{})$} \label{line:early LPC termination}
    \EndIf
    \While{True} 
    \LeftComment{CR iteration.}
        \State $t \gets t + 1$
        \State $\zeta_{t-1} = \frac{\dotprod{\brrttm, \HHb \brrttm}}{\vnorm{\HHb \yyttm}^2}$
        \State $\bddtt = \bddttm + \zeta_{t-1} \yyttm$
        \State $\brrtt = \brrttm - \zeta_{t-1} \HHb \yyttm$
        \State $\beta_{t-1} = \frac{\dotprod{\brrtt, \HHb \brrtt}}{\dotprod{\brrttm, \HHb \brrttm}} $
        \State $\yytt = \brrtt + \beta_{t-1} \yyttm$
        \State Check termination via $\left\{
\begin{aligned}
&\text{\Cref{term:direct}} && \text{if ``direct'' (see \cref{sec:direct algorithm statement})},\\
&\text{\Cref{term:indirect}} && \text{if ``indirect'' (see \cref{sec:indirect normal approach}).}
\end{aligned}
\right.$
    \EndWhile
\end{algorithmic}
\caption{Projected Conjugate Residual}
\label{alg:conjugate residual}
\end{algorithm}

\begin{terminationblockalg}[ht]{Direct 
Termination Block}[term:direct]
    \Require LPC tolerance $\sigma > 0$ and termination tolerance $\tautan>0$.
    \If{\cref{eqn:tangent component termination condition} holds with $\tau = \tautan$}
        \State \Return{$(\ddtt, \FLAG=\SOL{})$} 
    \ElsIf{\cref{eqn:LPC condition} holds}
        \State \Return{$(\rrtt, \FLAG=\LPC{})$}
    \Else
        \State \textbf{continue}
    \EndIf 
\end{terminationblockalg}

We now provide some remarks on the computational aspects of \cref{alg:conjugate residual}.

\begin{remark} \label{remark:projected CR termination condition}
    The termination condition in \cref{eqn:tangent component termination condition} is non-standard but particularly well suited to singular settings. In particular, the normal equation residual associated with \cref{eqn:projected CR formulation}, $\HHb \brrtt$, converges to zero even when $\HHb$ is indefinite, while $\vnorm{\HHb \bddtt}$ increases monotonically to $\vnorm{\HHb\HHb^\dagger\bbgg}$ provided that negative curvature is not detected; see \citep{liu_newton-mr_2023}. By contrast, a more conventional stopping rule based on the relative residual, $\vnorm{\brrtt} \leq \tau \vnorm{\bbgg}$, may be unreliable when $\bbgg \notin \Range(\HHb)$. In particular, $\vnorm{\brrtt}$ can remain bounded away from zero regardless of the quality of $\bddtt$, so that the relative residual tolerance may never be satisfied for small choices of $\tau$.
\end{remark}

\begin{remark} \label{remark:projected CR without factorisation}
    The formulation of projected CR in \cref{eqn:projected CR formulation} appears to require access to a factorization of the projection matrix of the form $\PP = \ZZ \ZZ^\transpose$. 
    However, in both the direct and indirect cases, the projector is constructed as $\PP = \eye - \bOmega \bOmega^\transpose$, where $\Range(\bOmega) \subseteq \Range(\JJ^\transpose)$, 
    and explicitly forming a basis matrix $\ZZ$ for $\Range(\PP)$ would incur substantial additional computational cost.
    
    Fortunately, a careful inspection of the projected CR recursions (\cref{alg:conjugate residual}) reveals that the method can be implemented without ever explicitly constructing $\ZZ$. In particular, by appropriately tracking auxiliary quantities throughout the iterations, all CR updates may be expressed solely in terms of 
    \begin{itemize}
        \item a projection-product oracle $\vv \mapsto \PP \vv$, and
        \item a Hessian-vector product oracle $\vv \mapsto \HH \vv$.
    \end{itemize}
    These operations are sufficient for performing the projected CR updates, evaluating the termination criteria, detecting LPC, and constructing the final search directions. Moreover, with a small amount of auxiliary storage, each projected CR iteration requires only a single application of both the projection-product oracle and the Hessian-vector product oracle; see \cref{apx:CR algorithm details} for details. 
    
    Both the Hessian-vector product oracle and projection-product oracle calls can be performed efficiently, and without excessive storage costs. In particular, Hessian-vector products can be computed without forming the Hessian using automatic differentiation \citep{blondelElementsDifferentiableProgramming2024}. Meanwhile, the projection-products can be computed without forming $\PP$ explicitly via
    \[
    \PP \vv
    =
    \vv-\bOmega(\bOmega^\transpose\vv).
    \]
    Consequently, the resulting tangent-space computation remains computationally efficient in the large-scale setting. 
\end{remark}

\bigskip

Finally, the following lemma shows that the favorable descent and curvature-detection properties of CR are preserved by projected CR. In particular, irrespective of the value of $\FLAG$, the returned direction satisfies the required descent properties with respect to the unreduced quantities $\HH$ and $\bgg$. This fact is central to the global convergence analysis developed in the subsequent sections. We state \cref{lemma:projected CR properties} in a general form so that it remains applicable to the modified termination conditions considered in \cref{sec:indirect normal approach}. However, the applicability of \cref{lemma:projected CR properties} to \cref{alg:conjugate residual} with \Cref{term:direct} is immediate.

\begin{lemma}\label{lemma:projected CR properties}
Let $\vnorm{\HHb} \leq \kappa < \infty$,  $\tau>0$, and $g$ denote the grade of $\bbgg$ with respect to $\HHb$. Let $\bddii$ and $\brrii$ be the iterates and residuals produced by projected CR with LPC detection. Suppose that 
\[
\dotprod{\brrii, \HHb \brrii } > (d+1)\sigma \vnorm{\brrii}^2, \quad i=0,\ldots,t,
\]
for some $0 \leq t \leq g-1$, i.e., the LPC condition has not been triggered through iteration $t$. Then for every $s=1, \ldots, t+1$, we have 
    \begin{align*}
        \dotprod{\bgg, \ddss} &\leq -\dotprod{\ddss, \HH \ddss} \leq -\sigma \vnorm{\ddss}^2, \tageq \label{eqn:SOL second order descent}\\ 
        \vnorm{\ddss} &\geq  \frac{\sigma}{\kappa^2} \vnorm{\bbgg}. \tageq\label{eqn:SOL step length}
    \end{align*}
    On the other hand, suppose LPC is detected at iteration $t$, either with $t = 0$ or before \cref{eqn:tangent component termination condition} is satisfied. Then, 
    \begin{align*}
        \dotprod{\bgg, \rrtt} &= - \vnorm{\rrtt}^2 ,\tageq\label{eqn:LPC step descent}\\ 
        \vnorm{\rrtt} &\geq \frac{\tau}{\sqrt{\tau^2 + \kappa^2}}\vnorm{\bbgg}. \tageq\label{eqn:LPC step length} 
    \end{align*}
\end{lemma}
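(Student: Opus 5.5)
The plan is to prove everything in the reduced coordinates and then lift through $\ZZ$. Since $\ZZ$ has orthonormal columns, for any $\bdd$ we have $\vnorm{\ZZ\bdd} = \vnorm{\bdd}$, $\dotprod{\bgg, \ZZ\bdd} = \dotprod{\bbgg, \bdd}$ and $\dotprod{\ZZ\bdd, \HH\ZZ\bdd} = \dotprod{\bdd, \HHb\bdd}$. With $\ddss = \ZZ\bddss$ and $\rrtt = \ZZ\brrtt$, each of \cref{eqn:SOL second order descent,eqn:SOL step length,eqn:LPC step descent,eqn:LPC step length} is therefore equivalent to the same statement with $(\HH,\bgg)$ replaced by $(\HHb,\bbgg)$. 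The reduced problem \cref{eqn:projected CR formulation} is exactly standard CR/MINRES on a $k$-dimensional system with $k \leq d$. Hence the LPC threshold $(d+1)\sigma \geq (k+1)\sigma$ is at least as strong as the one required by \citep[Theorem 3.3]{liu_minres_2022}. By the CR--MINRES equivalence \citep[Theorem 5]{limConjugateDirectionMethods2024}, no breakdown occurs before LPC detection, so the existing MINRES theory can be invoked directly on $(\HHb,\bbgg)$.

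\textbf{SOL case.} Two ingredients are needed for \cref{eqn:SOL second order descent}.
\begin{itemize}
\item \emph{First inequality.} Using $\bbgg = -\brrss - \HHb\bddss$, we get $\dotprod{\bbgg,\bddss} = -\dotprod{\bddss,\HHb\bddss} - \dotprod{\brrss,\bddss}$. It therefore suffices to show $\dotprod{\brrss,\bddss} \geq 0$. This is the known nonnegativity property of MINRES/CR iterates whenever $\dotprod{\brrii,\HHb\brrii} > 0$ for $i < s$ \citep{liu_minres_2022}, which I would cite. Alternatively it can be checked from the recursion $\bddss = \sum_{i<s}\zeta_i \yy^{(i)}$ with $\zeta_i > 0$, together with $\dotprod{\brrss, \yy^{(i)}} \geq 0$, obtained by induction using $\beta_i > 0$.
\item \emph{Second inequality.} By \citep[Theorem 3.3]{liu_minres_2022}, the LPC certificate through iteration $t$ gives $\dotprod{\vv,\HHb\vv} \geq \sigma\vnorm{\vv}^2$ for all $\vv \in \sK_{t+1}(\HHb,\bbgg)$. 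Since $\bddss \in \sK_s \subseteq \sK_{t+1}$ for $s \leq t+1$, this yields the bound.
\end{itemize}

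For \cref{eqn:SOL step length}, I would first compute the first iterate: $\bddone = -\zeta_0\bbgg$ with $\zeta_0 = \dotprod{\bbgg,\HHb\bbgg}/\vnorm{\HHb\bbgg}^2 \geq \sigma\vnorm{\bbgg}^2/(\kappa^2\vnorm{\bbgg}^2)$. This uses the $i=0$ LPC inequality (which gives at least $\sigma$ in the numerator) and $\vnorm{\HHb}\leq\kappa$ in the denominator. It follows that $\vnorm{\bddone} \geq \sigma\vnorm{\bbgg}/\kappa^2$. I would then invoke the monotonic growth of $\vnorm{\bddss}$ for CR/MINRES in the absence of nonpositive curvature \citep{liu_minres_2022,liu_newton-mr_2023} to extend the bound to all $s \leq t+1$. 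Citing this monotonicity under exactly the present certificate is the step I expect to need the most care; a short alternative is to show directly that $\dotprod{\bddss, \yy^{(s)}} > 0$ from the recursion.

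\textbf{LPC case.} If $t=0$, then $\brrzero = -\bbgg$ and both \cref{eqn:LPC step descent} and \cref{eqn:LPC step length} are immediate since $\tau/\sqrt{\tau^2+\kappa^2} < 1$. For $t \geq 1$, the optimality condition of \cref{eqn:projected CR formulation} gives $\brrtt \perp \HHb\sK_t$. In particular $\dotprod{\HHb\bddtt,\brrtt} = 0$, so $\dotprod{\bbgg,\brrtt} = -\vnorm{\brrtt}^2 - \dotprod{\HHb\bddtt,\brrtt} = -\vnorm{\brrtt}^2$, which is \cref{eqn:LPC step descent}. The same orthogonality gives the Pythagorean identity $\vnorm{\bbgg}^2 = \vnorm{\brrtt}^2 + \vnorm{\HHb\bddtt}^2$. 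Because LPC was detected before \cref{eqn:tangent component termination condition} held, we have $\vnorm{\HHb\bddtt} < \vnorm{\HHb\brrtt}/\tau \leq \kappa\vnorm{\brrtt}/\tau$. Substituting gives $\vnorm{\bbgg}^2 \leq (1+\kappa^2/\tau^2)\vnorm{\brrtt}^2$, which rearranges to \cref{eqn:LPC step length}.
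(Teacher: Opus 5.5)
Your proposal is correct and follows essentially the same route as the paper. You lift through the orthonormal $\ZZ$ and use the CR--MINRES equivalence. You get the SOL descent from $\dotprod{\bar{\rr}^{(s)},\bddss}\geq 0$ (equivalent to the paper's appeal to \citep[Theorem 3.8]{liu_minres_2022}) plus the curvature certificate on $\sK_{t+1}(\HHb,\bbgg)$, and the step-length bound from the explicit first iterate plus monotonicity. The LPC bounds come from $\dotprod{\bbgg,\brrtt}=-\vnorm{\brrtt}^2$ combined with the failed termination test.

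The differences are cosmetic. You derive that identity, and the relation $\vnorm{\bbgg}^2=\vnorm{\brrtt}^2+\vnorm{\HHb\bddtt}^2$, directly from the optimality condition $\brrtt\perp\HHb\sK_t(\HHb,\bbgg)$ rather than citing \citep[Lemma 3.1]{liu_minres_2022}. You also cite \citep[Theorem 3.3]{liu_minres_2022} for the curvature bound where the paper cites \citep[Lemma B.1]{smee_inexact_2024}.
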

\begin{proof}
We first establish \cref{eqn:SOL second order descent,eqn:SOL step length}. Since  $\dotprod{\brrii, \HHb \brrii} > (d+1)\sigma \vnorm{\brrii}^2 $ holds for $i=0,\ldots,t$, \citep[Theorem 5]{limConjugateDirectionMethods2024} implies that the CR iterates $\bddss$ are equivalent to that of MINRES for $s=1,\ldots,t+1$. Moreover, by \citep[Theorem 3.8]{liu_minres_2022} we have 
\begin{align*}
    \dotprod{\bgg, \ddss} + \dotprod{\ddss, \HH \ddss} 
    &= \dotprod{\bgg, \ZZ \bddss} + \dotprod{\ZZ\bddss, \HH \ZZ\bddss} \\
    &= \dotprod{\bbgg,\bddss} + \dotprod{ \bddss, \HHb\bddss} \leq 0.
\end{align*}
Moreover, since $\bddss \in \sK_{t+1}(\HHb,\bbgg)$ for $s = 1,\ldots,t+1$, \citep[Lemma B.1]{smee_inexact_2024} gives  
\begin{align*}
    \dotprod{ \bddss, \HHb\bddss} \geq \sigma \vnorm{\bddss}^2.
\end{align*}
Combining this with the previous inequality we have 
\begin{align*}
    \dotprod{\bgg, \ddss} = \dotprod{\bgg, \ZZ\bddss} = \dotprod{\bbgg, \bddss} \leq - \dotprod{ \bddss, \HHb\bddss} \leq - \sigma \vnorm{\bddss}^2 = - \sigma \vnorm{\ddss}^2,
\end{align*}
where in the final inequality we use the fact that $\ZZ$ is an orthogonal matrix, i.e., $\vnorm{\ddss} = \vnorm{\ZZ \bddss} = \vnorm{\bddss}$. Together these equalities establish \cref{eqn:SOL second order descent}. Next, suppose that the LPC condition is not triggered at $t = 0$. By direct calculation and the curvature test for $\brrzero = - \bbgg$ we have 
\begin{align*}
    \vnorm{\ddone}= \vnorm{\bddone} = \frac{\dotprod{\bbgg, \HHb \bbgg}}{\vnorm{\HHb\bbgg}^2} \vnorm{\bbgg} \geq \frac{\sigma}{\kappa^2} \vnorm{\bbgg}.
\end{align*}
This establishes \cref{eqn:SOL step length} for $t=0$. The result for $t>0$ follows from the monotonicity of the MINRES iterates \citep[Theorem 3.11]{liu_minres_2022}, that is, as long as LPC remains undetected through iteration $t$, we have 
\begin{align*}
    \vnorm{\bddttp} \geq \vnorm{\bddtt} \geq \ldots \geq \vnorm{\bddone} = \frac{\sigma}{\kappa^2} \vnorm{\bbgg}.
\end{align*}
The result in \cref{eqn:SOL step length} follows from $\ZZ$ being orthogonal. 

Next, we establish \cref{eqn:LPC step descent,eqn:LPC step length}. At $t=0$, $\rrzero = -\ZZ\bbgg$. So by the orthogonality of $\ZZ$ we have
\begin{align*}
    \vnorm{\rrzero} = \vnorm{\ZZ \bbgg} = \vnorm{\bbgg},
\end{align*}
and 
\begin{align*}
    \dotprod{\bgg, \rrzero} = -\dotprod{\bgg, \ZZ \bbgg} = - \vnorm{\bbgg}^2 = -\vnorm{\rrzero}^2.
\end{align*}
Therefore, assume $t>0$. If LPC is first detected at iteration $t$, then in all prior iterations $i=0, \ldots, t-1$ we must have $\dotprod{\brrii, \HHb \brrii} \neq 0$. Therefore, by \citep[Theorem 5]{limConjugateDirectionMethods2024} the CR iterate $\bddtt$ and the corresponding residual $\brrtt$ are equivalent to those from MINRES. By \citep[Lemma 3.1]{liu_minres_2022}, the residuals of MINRES satisfy $\dotprod{\brrtt, \bbgg} = -\vnorm{\brrtt}^2$. We therefore obtain \cref{eqn:LPC step descent} via
\begin{align*}
    \dotprod{\bgg, \rrtt}= \dotprod{\bgg, \ZZ\brrtt} = \dotprod{\bbgg, \brrtt} = -\vnorm{\brrtt}^2 = -\vnorm{\rrtt}^2, 
\end{align*}
where in the fourth equality we apply orthogonality of $\ZZ$. 

For \cref{eqn:LPC step length}, the failure of \cref{eqn:tangent component termination condition} to hold implies
\begin{align*}
    \vnorm{\HHb \brrtt}^2 
    \geq \tau^2 \vnorm{\HHb \bddtt}^2
    = \tau^2 \vnorm{\brrtt + \bbgg}^2
    = \tau^2\left(\vnorm{\bbgg}^2 - \vnorm{\brrtt}^2\right),
\end{align*}
where the second equality follows from $\HHb\bddtt+\bbgg=-\brrtt$, and the third equality uses the identity $\dotprod{\brrtt,\bbgg}=-\vnorm{\brrtt}^2$ established in \cref{eqn:LPC step descent}.
Combining this with
\[
    \vnorm{\HHb \brrtt}^2
    \leq
    \kappa^2 \vnorm{\brrtt}^2,
\]
and rearranging gives
\[
    (\tau^2+\kappa^2)\vnorm{\brrtt}^2
    \geq
    \tau^2\vnorm{\bbgg}^2.
\]
Therefore,
\begin{align*}
    \vnorm{\rrtt}
    =
    \vnorm{\brrtt}
    \geq
    \frac{\tau}{\sqrt{\tau^2+\kappa^2}}
    \vnorm{\bbgg},
\end{align*}
where the equality follows from the orthonormality of $\ZZ$.
\end{proof}

\subsection{Merit Function and Line Search}
\label{sec:merit}
We globalize our methods by employing an Armijo line search \citep{armijo_line_search_1966} on an $\ell_2$-penalty merit function 
\begin{align}
    \phi(\xx; \pi) = f(\xx) + \pi \vnorm{\cc(\xx)}, \tageq\label{eqn:merit function}
\end{align}
where $\pi >0$ is a penalty parameter that balances the contribution of constraint violation and the objective. The $\ell_2$ penalty merit function in \cref{eqn:merit function} is nonsmooth, so, in the following lemma, we bound it using a proxy for the directional derivative under smoothness conditions on the objective and constraints. This result is standard, so we defer the proof to \cref{apx:proof of descent lemma}.

\begin{lemma}[Merit Function Descent Lemma] \label{lemma:descent lemma}
Suppose that the objective function $f$ and constraint mapping $\cc$ are Lipschitz smooth with smoothness constants $L_f$ and $L_c$, respectively. Then, for any direction $\pp$, the directional derivative of the merit function \cref{eqn:merit function} satisfies
\begin{align*}
    D\phi(\xx;\pi)[\pp]
    \leq
    \dotprod{\grad f(\xx),\pp}
    +
    \pi\bigl(
        \vnorm{
            \cc(\xx) + \JJ(\xx)\pp
        }
        -
        \vnorm{
            \cc(\xx)
        }
    \bigr)
    \defeq
    \Db(\xx,\pp;\pi).
    \tageq\label{eqn:directional derivative upper bound}
\end{align*}
Moreover, for any $\alpha \in [0,1]$,
\begin{align*}
    \phi(\xx+\alpha\pp;\pi)
    \leq
    \phi(\xx;\pi)
    +
    \alpha \Db(\xx,\pp;\pi)
    +
    \frac{\alpha^2 \left( \pi L_c +  L_f \right)}{2}
    \vnorm{\pp}^2.
    \tageq\label{eqn:merit function upper bound}
\end{align*}
\end{lemma}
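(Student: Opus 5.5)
The plan is to reduce everything to one-sided smoothness bounds on $f$ and on the map $\xx \mapsto \vnorm{\cc(\xx)}$. Here ``Lipschitz smooth'' is read as Lipschitz continuity of $\grad f$ with constant $L_f$ and of $\JJ$ with constant $L_c$ in operator norm. Since $\phi(\cdot;\pi)$ is a sum of $f$ and $\pi\vnorm{\cc(\cdot)}$, I handle the two pieces separately and then add the bounds, with the penalty term scaled by $\pi>0$.

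For the objective, the standard descent lemma gives $f(\xx+\alpha\pp) \leq f(\xx) + \alpha\dotprod{\grad f(\xx),\pp} + \frac{L_f\alpha^2}{2}\vnorm{\pp}^2$. For the constraints, I start from the integral form of Taylor's theorem:
\[
\cc(\xx+\alpha\pp) = \cc(\xx) + \alpha\JJ(\xx)\pp + \int_0^1 \bigl(\JJ(\xx+s\alpha\pp)-\JJ(\xx)\bigr)\alpha\pp \, ds .
\]
Bounding the remainder by $\frac{L_c\alpha^2}{2}\vnorm{\pp}^2$ and applying the triangle inequality gives $\vnorm{\cc(\xx+\alpha\pp)} \leq \vnorm{\cc(\xx)+\alpha\JJ\pp} + \frac{L_c\alpha^2}{2}\vnorm{\pp}^2$.

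The key step, and the only one needing care, is convexity of the norm for $\alpha\in[0,1]$. Writing $\cc+\alpha\JJ\pp = (1-\alpha)\cc + \alpha(\cc+\JJ\pp)$ gives
\[
\vnorm{\cc+\alpha\JJ\pp} \leq \vnorm{\cc} + \alpha\bigl(\vnorm{\cc+\JJ\pp}-\vnorm{\cc}\bigr).
\]
Multiplying the constraint bound by $\pi$ and adding it to the objective bound yields exactly \cref{eqn:merit function upper bound} with $\alpha\Db(\xx,\pp;\pi)$.

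For the directional derivative bound \cref{eqn:directional derivative upper bound}, I use the same two ingredients before taking a limit. Subtract $\phi(\xx;\pi)$ from both sides of the combined bound and divide by $\alpha>0$, obtaining
\[
\frac{\phi(\xx+\alpha\pp;\pi)-\phi(\xx;\pi)}{\alpha} \leq \Db(\xx,\pp;\pi) + \frac{\alpha(\pi L_c+L_f)}{2}\vnorm{\pp}^2 .
\]
Then take $\limsup_{\alpha\downarrow0}$. Because $\phi$ is a composition of a norm with smooth maps plus a smooth function, the one-sided directional derivative exists, so the limsup is a limit and equals $D\phi(\xx;\pi)[\pp]$, giving the claim.

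I do not expect any real obstacle. The only point to flag is the case $\cc(\xx)=0$, where the norm is nondifferentiable. The convexity argument does not use differentiability of the norm, so it covers this case without modification, and existence of the one-sided derivative follows from the chain rule for directional derivatives of the Lipschitz, convex norm composed with $\cc$.
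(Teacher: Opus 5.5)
Your proposal is correct and follows essentially the same route as the paper: a smoothness bound for $f$, a Taylor-remainder plus triangle-inequality bound comparing $\vnorm{\cc(\xx+\alpha\pp)}$ with $\vnorm{\cc(\xx)+\alpha\JJ(\xx)\pp}$, convexity of the norm to pull out $\alpha$ for $\alpha\in[0,1]$, and then dividing by $\alpha$ and letting $\alpha\downarrow 0$. Your added justification that the one-sided directional derivative exists, so that the $\limsup$ is a genuine limit even when $\cc(\xx)=0$, is a small point the paper leaves implicit.
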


The upper bound established in \cref{lemma:descent lemma} forms the basis for the line-search globalization strategy employed by our method. In particular, at iteration $k$, given a primal update $\pp_k$, we select a step size $\alpha_k$ via a backtracking line search such that
\begin{align*}
    \phi(\xx_k+\alpha_k\pp_k;\pi_k)
    \leq
    \phi(\xx_k;\pi_k)
    +
    \alpha_k \eta
    \Db(\xx_k,\pp_k;\pi_k),
    \tageq\label{eqn:line search descent criteria}
\end{align*}
where $\eta \in (0,1)$ is a user-defined line-search parameter.
A central component of the convergence analysis is showing that the primal update $\pp_k$, together with the choice of merit parameter $\pi_k$, ensures that the upper bound $\Db(\xx_k,\pp_k;\pi_k)$ is strictly negative. Consequently, \cref{eqn:line search descent criteria} guarantees a monotonic decrease of the merit function $\phi(\cdot;\pi_k)$ at every iteration.
To this end, at each iteration we define the trial merit parameter
\begin{align}
    \pitrial_k
    =
    \begin{cases}
        \displaystyle
        \frac{ \dotprod{\grad f_k,\vv_k}}{(1-\rho)\vnorm{\cc_k}},
        &
        \text{if }
        \vnorm{\cc_k} \neq 0, \\[2ex]
        0,
        &
        \text{if }
        \vnorm{\cc_k} = 0,
    \end{cases}
    \tageq\label{eqn:direct merit parameter trial}
\end{align}
where $\rho \in (0,1)$ is a user-specified parameter. Starting from an initial value $\pi_{-1} > 0$, we then update the merit parameter according to
\begin{align*} 
\pi_{k} = \max\{\pitrial_k, \pi_{k-1} \}. \tageq\label{eqn:direct merit parameter update} 
\end{align*}
Intuitively, the update strategy in \cref{eqn:direct merit parameter trial,eqn:direct merit parameter update} ensures that the merit parameter is monotonically nondecreasing and sufficiently large to balance any potential \emph{increase} in the objective caused by the normal step, quantified by $\dotprod{\grad f_k,\vv_k}$, against the corresponding \emph{decrease} in the constraint violation, which is proportional to $\vnorm{\cc_k}$.

\section{Our Algorithm: Direct Normal Computation} \label{sec:direct algorithm statement}
To enhance the clarity of the presentation and the underlying ideas, in this section we focus exclusively on the setting where the normal component of the SQP step is computed directly via a factorization of the constraint Jacobian. This approach is particularly practical when the number of constraints is small relative to the ambient dimension of the problem, i.e., \(c \ll d\). The case of indirect normal computation is deferred to \cref{sec:indirect normal approach}.

We start by discussing the tangent and normal component calculations, followed by details on termination conditions and the dual update in \cref{sec:direct algorithm setup}. We then present the global complexity guarantee of the resulting algorithm in \cref{sec:direct convergence analysis}.  

\subsection{Algorithm and Setup} \label{sec:direct algorithm setup}
\paragraph{Normal Component (Exact Computation).}
We first consider the normal component problem \cref{eqn:normal component problem}. By computing the economy SVD of the Jacobian as in \cref{eqn:SVD of jacobian}, we can simultaneously solve \cref{eqn:normal component problem} and explicitly construct the null space projector $\PP_{\Null(\JJ)}$.
This approach requires the Jacobian to be formed and stored explicitly. When the number of constraints is small relative to the ambient problem dimension, i.e., $c \ll d$, this is computationally practical. In particular, in this regime the Jacobian may be assembled row-by-row using automatic differentiation techniques \citep{blondelElementsDifferentiableProgramming2024}, requiring only $c$ evaluations of the constraint gradients.
Once $\JJ$ has been formed, we compute the economy SVD\footnote{Equivalently, a thin QR factorization of $\JJ^\transpose$ could be employed without affecting the analysis.} of $\JJ^\transpose$ at a computational cost of $\sO(dc^2)$, which is linear in the ambient dimension $d$ when $c \ll d$ \citep{golub_matrix_2013}.

With the economy SVD of the Jacobian available, the unique solution of \cref{eqn:normal component problem} can be written explicitly as $\vv^\star = -\JJ^\dagger \cc$. Indeed, using the SVD representation of $\JJ$ in \cref{eqn:SVD of jacobian}, we obtain
\begin{align*}
    \vv^\star
    =
    -
    \bOmega
    \bSigma^{-1}
    \UU^\transpose
    \cc.
    \tageq\label{eqn:exact normal update}
\end{align*}
From this expression, it follows immediately that $\vv^\star \in \Range(\JJ^\transpose)$.
Moreover, the columns of the SVD factor $\bOmega$ form an orthonormal basis for $\Range(\JJ^\transpose)$. Consequently, they can be used to construct the orthogonal projector onto the null space of the Jacobian:
\begin{align*}
    \PP_{\Null(\JJ)}
    =
    \eye - \bOmega \bOmega^\transpose.
    \tageq\label{eqn:exact normal null space projection}
\end{align*}

\paragraph{Tangent Component.} 

With the projected CR framework established in \cref{sec:CR algorithm background} and the derivation of \cref{eqn:inexact tangent component CR subproblem} in \cref{sec:step derivation}, the tangent computation reduces to a call to the projected CR algorithm (\cref{alg:conjugate residual} with \Cref{term:direct}) with $\HH = \LHess$, $\PP = \PP_{\Null(\JJ)}$, and $\bgg = \gradf$, where $\LHess$ is the current Lagrangian Hessian and $\PP_{\Null(\JJ)}$ is given by \cref{eqn:exact normal null space projection}.
Note that, since $\Range(\bSS) = \Null(\JJ)$, $\bSS^\transpose \Lgrad = \bSS^\transpose \gradf$, and hence it suffices to specify $\bgg = \gradf$. Because the projected CR method operates directly on the reduced Lagrangian Hessian, $\HHt$, the curvature tests described in \cref{sec:CR algorithm background} naturally exploit the curvature of the Lagrangian Hessian in null-space directions.
Moreover, irrespective of the value of $\FLAG$, the iterates generated by projected CR automatically satisfy $\ww \in \Null(\JJ)$. 

\paragraph{Termination Condition.}
The goal of our algorithm is to identify an \emph{$(\epsc,\epsp)$-stationary point} in the sense of \cref{eqn:epsilon KKT stationary point}. While this definition uses the full primal Lagrangian gradient norm $\vnorm{\Lgrad(\xx,\blambda)}$, our algorithm instead monitors the projected gradient norm
\[\vnorm{\PPNJxx \Lgrad(\xx, \blambda)}= \vnorm{\PPNJxx(\gradf + \JJ^\transpose \blambda)} = \vnorm{\PPNJxx \gradf(\xx)}.\] 
Fortunately, as the following lemma demonstrates, the projected gradient and the primal Lagrangian gradient are intimately connected.

\begin{lemma}\label{lemma:termination condition equivalence}
    The primal Lagrangian gradient and null space projected gradient are equivalent termination conditions in the sense that, for $\epsp \geq 0$
    \begin{align*}
        \vnorm{\PPNJxx\grad f(\xx)} \leq \epsp \iff \exists\, \blambda^\circ, \ \vnorm{\Lgrad(\xx, \blambda^\circ)} \leq \epsp.
    \end{align*}
\end{lemma}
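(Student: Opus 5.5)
Both directions rest on the orthogonal decomposition $\real^d = \Null(\JJ) \oplus \Range(\JJ^\transpose)$, together with the observation that $\JJ^\transpose \blambda$ sweeps out all of $\Range(\JJ^\transpose)$ as $\blambda$ ranges over $\real^c$. Concretely, we will use $\PPNJxx = \eye - \JJ^\dagger \JJ$ from \cref{sec:step derivation} and the complementary projector $\PPRJT = \JJ^\dagger \JJ = \bOmega\bOmega^\transpose$, obtained from the economy SVD \cref{eqn:SVD of jacobian}. Full row rank is not needed: the pseudoinverse handles rank-deficient $\JJ$.

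\emph{($\Rightarrow$).} We choose the least-squares multiplier $\blambda^\circ \defeq -\JJ^\ddagger \grad f(\xx)$, which is exactly the $\qq^\star$ structure appearing in \cref{sec:step derivation}. Then
\[
\Lgrad(\xx,\blambda^\circ) = \grad f - \JJ^\transpose \JJ^\ddagger \grad f = (\eye - (\JJ^\dagger\JJ)^\transpose)\grad f = (\eye - \JJ^\dagger \JJ)\grad f = \PPNJxx \grad f .
\]
This uses $\JJ^\transpose \JJ^\ddagger = (\JJ^\dagger \JJ)^\transpose$ and the symmetry of $\JJ^\dagger\JJ$, which is a Moore--Penrose property. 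Alternatively, via the SVD, $\JJ^\transpose\JJ^\ddagger = \bOmega\bSigma\UU^\transpose\UU\bSigma^{-1}\bOmega^\transpose = \bOmega\bOmega^\transpose$. Hence $\vnorm{\Lgrad(\xx,\blambda^\circ)} = \vnorm{\PPNJxx\grad f(\xx)} \le \epsp$.

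\emph{($\Leftarrow$).} Let $\blambda^\circ$ be arbitrary with $\vnorm{\Lgrad(\xx,\blambda^\circ)}\le\epsp$. Since $\JJ^\transpose\blambda^\circ \in \Range(\JJ^\transpose) = \Null(\JJ)^\perp$, we have $\PPNJxx \JJ^\transpose \blambda^\circ = 0$. The paper already notes this identity just before the lemma: $\PPNJxx\Lgrad = \PPNJxx\grad f$. Because an orthogonal projector is nonexpansive,
\[
\vnorm{\PPNJxx \grad f(\xx)} = \vnorm{\PPNJxx \Lgrad(\xx,\blambda^\circ)} \le \vnorm{\Lgrad(\xx,\blambda^\circ)} \le \epsp .
\]

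There is no real obstacle here. The only point needing care is the identity $\JJ^\transpose\JJ^\ddagger = \PPRJT$ in the forward direction. I would verify it directly from the SVD to avoid any ambiguity about the $\ddagger$ notation, and to confirm it holds without assuming full row rank.
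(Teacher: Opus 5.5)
Your proof is correct, and the forward direction is exactly the paper's: you take the least-squares multiplier $\blambda^\star=-\JJ^\ddagger\gradf$ and use $\JJ^\transpose\JJ^\ddagger=\JJ^\dagger\JJ$ to obtain $\Lgrad(\xx,\blambda^\star)=\PPNJ\gradf$. The reverse direction differs only slightly. The paper uses least-squares optimality of $\blambda^\star$, namely $\vnorm{\PPNJ\gradf}=\vnorm{\JJ^\transpose\blambda^\star+\gradf}\le\vnorm{\JJ^\transpose\blambda^\circ+\gradf}$, whereas you use $\PPNJ\JJ^\transpose\blambda^\circ=0$ together with nonexpansiveness of the orthogonal projector. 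These are the same orthogonal-decomposition fact seen from two sides, and your version has the small advantage of not needing to cite that the pseudoinverse solution is a minimizer.
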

\begin{proof}
    For any $\xx$, let $\blambda^\star = - \JJ^\ddag \grad f$ be the pseudoinverse solution to $\min_{\blambda} \vnorm{\JJ^\transpose \blambda + \grad f}$. We have
    \begin{align*}
        \vnorm{\Lgrad(\xx, \blambda^{\star})}  = \vnorm{\JJ^\transpose \blambda^\star + \grad f}= \vnorm{ - \JJ^\transpose \JJ^\ddag \grad f + \grad f} = \vnorm{(\eye - \JJ^\dagger \JJ) \grad f} = \vnorm{\PPNJ \grad f}.
    \end{align*}
     Now, the forward implication is immediate with $\blambda^\circ = \blambda^\star$. For the reverse implication, it also follows that 
    \begin{align*}
        \vnorm{\PPNJ\gradf } = \vnorm{\JJ^\transpose \blambda^\star + \grad f} \leq  \vnorm{\JJ^\transpose \blambda^\circ + \grad f} = \vnorm{\Lgrad(\xx, \blambda^\circ)}  \leq \epsp.
    \end{align*} 
\end{proof}
The multiplier $\blambda^\star = -\JJ^\ddagger \gradf$ in the proof of \cref{lemma:termination condition equivalence} is known as the \emph{least-squares multiplier} and has been used explicitly in the literature to construct the dual update \citep[Chapter 18]{nocedal_numerical_2006}. Remarkably, \cref{lemma:termination condition equivalence} shows that any algorithm targeting the projected gradient is implicitly targeting the primal Lagrangian gradient with the corresponding least-squares multiplier. In fact, \cref{lemma:termination condition equivalence} becomes more intuitive when we note that, if $\vnorm{\Lgrad} = \vnorm{\gradf + \JJ^\transpose \blambda}$ is small, then we expect $\gradf \approx - \JJ^\transpose \blambda$, i.e., the gradient should lie approximately in $\Range(\JJ^\transpose)$ (hence approximately orthogonal to $\Null(\JJ(\xx))$).

Motivated by these observations, we adopt the termination condition
\begin{align*}
    \vnorm{\cc(\xx)} \leq \epsc \quad \text{and} \quad  \vnorm{\PPNJxx \gradf(\xx)} \leq \epsp. \tageq\label{eqn:termination condition}
\end{align*}

\paragraph{Dual Update.} Notably, the termination condition in \cref{eqn:termination condition} depends only on the primal variables and exhibits no explicit dependence on the dual variables. Indeed, the key first-order components of our method, such as the normal update \cref{eqn:exact normal update} and the reduced gradient $\bSS^\transpose \gradf$ used in the tangent computation, are also independent of the dual variables. In fact, the multipliers enter the algorithm only through the second-order Lagrangian Hessian $\LHess$ used for the tangent computation. However, the specific choice of multipliers does not affect the convergence guarantees, provided the Lagrangian Hessian remains bounded (see \cref{remark:Lagrangian Hessian boundedness}). In this sense, the method may be viewed as \emph{dual-update agnostic}: the projection onto the null space implicitly encodes the least-squares multiplier.

Nevertheless, in practice it may be beneficial to select the dual variables in a way that yields meaningful curvature for the Lagrangian. We consider two simple choices. First, recalling that the projected gradient implicitly reflects the least-squares multiplier, we may reuse the SVD factors from the normal step to compute this multiplier explicitly,
\begin{align*}
    \blambda_{k+1} = - \JJ_k^\ddagger \gradf_k
    = - \UU_k \bSigma_k^{-1}\bOmega_k^\transpose \gradf_k. 
    \tageq\label{eqn:least-squares-multipliers}
\end{align*}
This update ensures that the Lagrangian Hessian reflects the curvature of the Lagrangian evaluated at the least-squares multiplier.
Alternatively, we may set $\blambda_{k+1} = 0$, in which case
\begin{align*}
    \LHess(\xx_k, \blambda_{k+1}) = \Hessf_k,
\end{align*}
so that the tangent computation uses only the curvature of the objective function. Since the tangent step is ultimately intended to reduce the objective value (\cref{sec:direct convergence analysis}), this choice may be interpreted as favoring the curvature of the objective over that of the constraints.
Both the least-squares update \cref{eqn:least-squares-multipliers} and the choice $\blambda_{k+1}=0$ preserve the boundedness of the Lagrangian Hessian under the regularity assumptions (\cref{ass:basic assumptions,ass:jacobian regularity}) used in our analysis; see \cref{remark:Lagrangian Hessian boundedness}. Regardless of the specific dual update, once a point $\xx$ satisfying \cref{eqn:termination condition} is obtained, \cref{lemma:termination condition equivalence} implies that a pair $(\xx,\blambda)$ satisfying \cref{eqn:epsilon KKT stationary point} can be recovered by solving
\[
\min_{\blambda}\vnorm{\JJ^\transpose \blambda + \gradf}^2.
\]

Our complete method is presented in \cref{alg:direct primal-dual algorithm}.
\begin{algorithm}
\renewcommand{\baselinestretch}{1.25}\selectfont 
\begin{algorithmic}[1]
\Require{Outer termination tolerances $(\epsc, \epsp) \in (0,1)^2$, tangent termination tolerance $\tautan \in (0,1)$, curvature tolerance $\sigma > 0$, initial point $(\xx_0, \blambda_0)$, initial merit parameter $\pi_{-1} > 0$, merit parameter update factor $\rho \in (0,1)$, line search parameter $\eta \in (0,1)$, and backtracking parameter $\theta \in (0,1)$.}

\For{$k=0, 1, 2, \ldots$}
    \State Compute the economy SVD of the Jacobian: $\JJ_k = \UU_k \bSigma_k \bOmega_k^\transpose$
    \State Compute $\vv_k$ from \cref{eqn:exact normal update} and $\PPNJk$ from \cref{eqn:exact normal null space projection}

    \If{$\vnorm{\PPNJk \gradf_k} \leq \epsp$ and $\vnorm{\cc_k} \leq \epsc$}
        \State (Optional) compute $\blambda_k^\star = \arg\min_{\blambda} \vnorm{\Lgrad(\xx_k, \blambda)}^2$
        \State \Return $(\xx_k, \blambda_k^\star)$ \Comment{Terminate with $(\epsc,\epsp)$-stationary point.}
    \EndIf

    \State Update $\blambda_{k+1}$, e.g., the least-squares multiplier \cref{eqn:least-squares-multipliers} or $\blambda_{k+1} = 0$ \Comment{Optional dual update.}

    \State Set $\bgg_k = \gradf_k$, $\HH_k = \LHess(\xx_k, \blambda_{k+1})$

    \State $(\ww_k, \FLAG_k)\gets$[\cref{alg:conjugate residual}]$(\HH_k, \bgg_k, \PPNJk, \sigma, \tautan)$ \Comment{\cref{alg:conjugate residual} with \Cref{term:direct}.}

    \State Update merit parameter $\pi_k$ using \cref{eqn:direct merit parameter trial,eqn:direct merit parameter update}

    \State Set $\pp_k = \vv_k + \ww_k$
    \State Use backtracking line search to find $\alpha_k \leq 1$ satisfying \cref{eqn:line search descent criteria}

    \State $\xx_{k+1} = \xx_k + \alpha_k \pp_k$
\EndFor

\caption{Direct Primal-Dual Newton-MR Algorithm}
\label{alg:direct primal-dual algorithm}
\end{algorithmic}
\end{algorithm}

\subsection[
    Convergence Analysis of Algorithm~\ref{alg:direct primal-dual algorithm}
]{
    Convergence Analysis of \cref{alg:direct primal-dual algorithm}
} \label{sec:direct convergence analysis}
We now establish the global convergence properties of \cref{alg:direct primal-dual algorithm}. To this end, we first introduce and discuss the assumptions underpinning our analysis. Since our primary objective is to address the nonconvex setting, that is, to dispense with \cref{cond:kkt:positive-definite-Lagrangian-Hessian}, we use a strengthened version of \cref{cond:kkt:full-rank-Jacobian}.

\begin{assumption}[Jacobian Regularity] \label{ass:jacobian regularity}
    Let $\sX$ denote the set containing all iterates and trial points generated by the algorithm. There exist constants $0 < \sigma_\JJ \leq \kappa_\JJ < \infty$ such that
    \[
        \sigma_\JJ \leq \sigma_c(\xx) \leq \sigma_1(\xx) \leq \kappa_\JJ, \qquad \forall \xx \in \sX.
    \]
\end{assumption}

\cref{ass:jacobian regularity} is stronger than \cref{cond:kkt:full-rank-Jacobian}, as it additionally requires a uniform lower bound on the smallest singular value of the Jacobian. Nevertheless, assumptions of this form are standard throughout the SQP literature in the full-rank setting \citep{nocedal_numerical_2006,byrdInexactSQPMethod2008,byrdInexactNonconvexNewtonMethod2010,curtisWorstCaseComplexitySQP2022}.
Indeed, when $\sigma_c(\xx)$ is not uniformly bounded away from zero, the problem may be more naturally viewed as belonging to a rank-deficient regime, in which changes in the rank of the Jacobian are permitted and must be accounted for explicitly in the analysis.

\begin{assumption} \label{ass:basic assumptions}
On $\sX$, the following holds:
\begin{assitems}
    \item The objective $f$ and constraint mapping $\cc$ are twice continuously differentiable. Moreover, $\grad f$ and $\JJ$ are $L_f$- and $L_c$-Lipschitz continuous, respectively, and $f$ is bounded below by $\flow$.
    \label{ass:item:function constraint smoothness}

    \item The objective gradient is uniformly bounded: $\vnorm{\grad f(\xx)} \leq \kappa_g$.
    \label{ass:item:gradient is bounded}

    \item The constraints are uniformly bounded: $\vnorm{\cc(\xx)} \leq \kappa_c$.
    \label{ass:item:constraint is bounded}
\end{assitems}
\end{assumption}
While \cref{ass:basic assumptions} is relatively restrictive, such conditions are standard in the analysis of SQP methods (see, e.g., \citep{berahasSequentialQuadraticOptimization2020,curtisWorstCaseComplexitySQP2022,byrdInexactSQPMethod2008,byrdInexactNonconvexNewtonMethod2010,curtisMatrixFreeAlgorithmEquality2010}). Moreover, both the SQP literature and practical implementations suggest that SQP algorithms are significantly more robust to violations of \cref{ass:basic assumptions} than, for example, \cref{cond:kkt:positive-definite-Lagrangian-Hessian}. For this reason, and since our primary goal is to relax \cref{cond:kkt:positive-definite-Lagrangian-Hessian}, we proceed under \cref{ass:basic assumptions}.

\cref{ass:basic assumptions} omits some standard restrictions that appear in related SQP analyses. In particular, we do not impose an explicit boundedness assumption on the Lagrangian Hessian (or on the dual variables). The following remark shows that boundedness of the Lagrangian Hessian follows from \cref{ass:basic assumptions} under natural choices of the dual variables used in evaluating $\LHess(\xx,\blambda)$.

\begin{remark}[Boundedness of the Primal Lagrangian Hessian] \label{remark:Lagrangian Hessian boundedness}
\cref{ass:item:function constraint smoothness} implies that both the objective Hessian and constraint Hessians are uniformly bounded on $\sX$. Since
\begin{align*}
    \LHess(\xx,\blambda)
    =
    \Hessf(\xx)
    +
    \sum_{i=1}^c \lambda_i \grad^2 \cc_i(\xx),
\end{align*}
it follows that boundedness of the primal Lagrangian Hessian is guaranteed provided the dual variables $\{\blambda_k\}$ remain bounded.
Clearly, choosing $\blambda_{k+1} = 0$ ensures boundedness. On the other hand, for the least-squares multipliers $\blambda^\star$ defined in \cref{eqn:least-squares-multipliers}, \cref{ass:item:gradient is bounded} and \cref{ass:jacobian regularity} imply
\begin{align*}
    \vnorm{\blambda^\star(\xx)}
    &=
    \vnorm{\JJ^\ddag(\xx)\grad f(\xx)} 
    \leq
    \frac{\kappa_g}{\sigma_\JJ}.
\end{align*}
Since $\blambda_{k+1}=0$ and the least-squares multipliers are the only dual updates we consider in this section, we may therefore conclude that the Lagrangian Hessian is uniformly bounded, i.e.,
\begin{align*}
    \vnorm{\LHess(\xx,\blambda)} \leq \kappa_{\sL}.
    \tageq\label{eqn:Lagrangian Hessian boundedness}
\end{align*}
\end{remark}

We now state the main convergence result and provide brief remarks before presenting the proof.
\begin{theorem}\label{thm:direct case convergence}
Suppose \cref{ass:basic assumptions,ass:jacobian regularity} hold and let $(\epsc,\epsp)\in(0,1)^2$. Then \cref{alg:direct primal-dual algorithm} reaches a point satisfying \cref{eqn:termination condition} in at most
\[
K \defeq \left\lceil
\frac{(f_0-\flow)/\pi_{-1} + \vnorm{\cc_0}}
{\min\{\Gamma_c,\Gamma_p\}\min\{\epsc,\epsp^2\}}
\right\rceil
+1,
\]
iterations, where $\Gamma_c,\Gamma_p>0$ are constants defined in \cref{eq:direct case Gamma}.
\end{theorem}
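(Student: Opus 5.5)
The plan is to show that, at every non-terminal iteration $k$, the merit function decreases by at least a constant multiple of $\min\{\epsc,\epsp^2\}$ after scaling by $1/\pi_k$, and that the penalty parameter stays bounded. A telescoping argument on the scaled merit function $\phi(\xx;\pi)/\pi = f(\xx)/\pi + \vnorm{\cc(\xx)}$ then gives the bound.

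\textbf{Step 1 (model decrease).} Since $\vv_k = -\JJ_k^\dagger\cc_k$ solves \cref{eqn:normal component problem} exactly and $\ww_k\in\Null(\JJ_k)$, we have $\cc_k + \JJ_k\pp_k = 0$. Hence
\[
\Db(\xx_k,\pp_k;\pi_k) = \dotprod{\gradf_k,\vv_k} + \dotprod{\gradf_k,\ww_k} - \pi_k\vnorm{\cc_k}.
\]
By \cref{eqn:direct merit parameter trial,eqn:direct merit parameter update}, $\dotprod{\gradf_k,\vv_k}\le(1-\rho)\pi_k\vnorm{\cc_k}$, so
\[
\Db \le -\rho\pi_k\vnorm{\cc_k} + \dotprod{\gradf_k,\ww_k}.
\]
The tangent term is bounded using \cref{lemma:projected CR properties} with $\bbgg=\bSS_k^\transpose\gradf_k$, so that $\vnorm{\bbgg}=\vnorm{\PPNJk\gradf_k}$, and $\kappa=\kappa_\sL$ from \cref{remark:Lagrangian Hessian boundedness}.
\begin{itemize}
\item In the \SOL{} case, $\dotprod{\gradf_k,\ww_k}\le-\sigma\vnorm{\ww_k}^2$ and $\vnorm{\ww_k}\ge(\sigma/\kappa_\sL^2)\vnorm{\PPNJk\gradf_k}$.
\item In the \LPC{} case, $\dotprod{\gradf_k,\ww_k}=-\vnorm{\ww_k}^2$ and $\vnorm{\ww_k}\ge\frac{\tautan}{\sqrt{\tautan^2+\kappa_\sL^2}}\vnorm{\PPNJk\gradf_k}$.
\end{itemize}
In both cases $\dotprod{\gradf_k,\ww_k}\le -c_1\vnorm{\ww_k}^2\le -c_2\vnorm{\PPNJk\gradf_k}^2$ for explicit constants $c_1,c_2$.

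Next we need an upper bound on $\vnorm{\ww_k}$, which matters for the quadratic term in the line search. In the \LPC{} case $\vnorm{\ww_k}\le\vnorm{\bbgg}$, since residual norms are nonincreasing. In the \SOL{} case, the inequality $\sigma\vnorm{\ww_k}^2\le-\dotprod{\gradf_k,\ww_k}\le\vnorm{\PPNJk\gradf_k}\vnorm{\ww_k}$ gives $\vnorm{\ww_k}\le\vnorm{\PPNJk\gradf_k}/\sigma$. For the normal step, $\vnorm{\vv_k}\le\vnorm{\cc_k}/\sigma_\JJ$.

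\textbf{Step 2 (boundedness of $\pi_k$).} We have $\pitrial_k\le\kappa_g\vnorm{\vv_k}/((1-\rho)\vnorm{\cc_k})\le\kappa_g/((1-\rho)\sigma_\JJ)$. Therefore $\pi_k\le\bar\pi\defeq\max\{\pi_{-1},\kappa_g/((1-\rho)\sigma_\JJ)\}$ for all $k$, and $\pi_k$ is nondecreasing.

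\textbf{Step 3 (step size bound).} Insert these bounds into \cref{eqn:merit function upper bound}. The Armijo condition \cref{eqn:line search descent criteria} holds whenever
\[
\alpha \le 2(1-\eta)\frac{-\Db}{(\bar\pi L_c+L_f)\vnorm{\pp_k}^2}.
\]
This is the step I expect to be the main obstacle. One must show that $-\Db/\vnorm{\pp_k}^2$ is bounded below by a constant while $\vnorm{\pp_k}$ mixes $\vnorm{\cc_k}$ and $\vnorm{\PPNJk\gradf_k}$ terms. The first thing I would try is $\vnorm{\pp_k}^2\le2\vnorm{\vv_k}^2+2\vnorm{\ww_k}^2$, which gives $\vnorm{\ww_k}^2\lesssim-\dotprod{\gradf_k,\ww_k}$ and $\vnorm{\vv_k}^2\le\kappa_c\vnorm{\cc_k}/\sigma_\JJ^2\lesssim\vnorm{\cc_k}/\pi_k$, using $\pi_k\ge\pi_{-1}$ and $\pi_k\le\bar\pi$. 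Hence $\vnorm{\pp_k}^2\le C(-\Db)/\pi_{-1}$, the backtracking step is bounded below by $\alpha_{\min}=\min\{1,\theta\cdot\text{const}\}$, and the per-iteration decrease is
\[
\phi(\xx_k;\pi_k)-\phi(\xx_{k+1};\pi_k)\ge\eta\alpha_{\min}\bigl(\rho\pi_k\vnorm{\cc_k}+c_2\vnorm{\PPNJk\gradf_k}^2\bigr).
\]

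\textbf{Step 4 (telescoping).} Divide by $\pi_k$ and set $\psi_k(\xx)=f(\xx)/\pi_k+\vnorm{\cc(\xx)}$. Because $\pi_{k+1}\ge\pi_k$, we have
\[
\psi_{k+1}(\xx_{k+1})\le\psi_k(\xx_{k+1})+(f_{k+1}-\flow)\bigl(1/\pi_{k+1}-1/\pi_k\bigr)\le\psi_k(\xx_{k+1}),
\]
after working with $f-\flow\ge0$ (shifting $f$ by $\flow$ changes nothing in the algorithm). Summing over non-terminal iterations $k=0,\dots,K-2$ gives
\[
(f_0-\flow)/\pi_{-1}+\vnorm{\cc_0}\ \ge\ \sum_k\eta\alpha_{\min}\bigl(\rho\vnorm{\cc_k}+(c_2/\bar\pi)\vnorm{\PPNJk\gradf_k}^2\bigr).
\]
At each non-terminal iterate either $\vnorm{\cc_k}>\epsc$ or $\vnorm{\PPNJk\gradf_k}>\epsp$. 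So each summand exceeds $\min\{\Gamma_c,\Gamma_p\}\min\{\epsc,\epsp^2\}$, where $\Gamma_c=\eta\alpha_{\min}\rho$ and $\Gamma_p=\eta\alpha_{\min}c_2/\bar\pi$; these are the constants recorded in \cref{eq:direct case Gamma}. Rearranging yields the stated $K$.
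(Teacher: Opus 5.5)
Your proposal is correct and follows essentially the same route as the paper: the merit-parameter rule together with exact linearized feasibility of $\pp_k$ gives $\Db(\xx_k,\pp_k;\pi_k)\le-\gammatan\vnorm{\ww_k}^2-\rho\pi_k\vnorm{\cc_k}$, the projected CR lemma supplies $\vnorm{\ww_k}\ge\omegatan\vnorm{\PPNJk\gradf_k}$, a uniform step-size bound follows from $\vnorm{\vv_k}^2\le\kappa_c\vnorm{\cc_k}/\sigma_\JJ^2$, and the same $\phi/\pi_k$ telescoping closes the argument (your shift of $f$ by $\flow$ is equivalent to the paper's treatment of the cross terms). The only discrepancy is cosmetic: the paper uses the exact split $\vnorm{\pp_k}^2=\vnorm{\vv_k}^2+\vnorm{\ww_k}^2$ (since $\vv_k\in\Range(\JJ_k^\transpose)$ and $\ww_k\in\Null(\JJ_k)$) instead of your factor-$2$ bound, so your $\alpha_{\min}$ and hence your $\Gamma_c,\Gamma_p$ are somewhat smaller than those in \cref{eq:direct case Gamma}; also, your upper bound on $\vnorm{\ww_k}$ is never used.
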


\cref{thm:direct case convergence} implies that a primal iterate satisfying \cref{eqn:termination condition} is obtained within
$\sO(\max\{\epsc^{-1},\epsp^{-2}\})$ iterations.
Setting $\epsc=\epsp^2=\varepsilon$, the resulting complexity becomes $\sO(\varepsilon^{-2})$, matching the rate obtained in deterministic SQP frameworks such as \citep{curtisWorstCaseComplexitySQP2022,berahasSequentialQuadraticOptimization2020}. In contrast to those approaches, however, the present method does not require exact subproblem solves and does not rely on a positive definite Lagrangian Hessian proxy.
Additionally, the resulting dependence on $\epsp$ obtained is consistent with known lower bounds for first-order stationarity of nonconvex smooth optimization in the unconstrained setting, and matches the rates achieved by both gradient descent and Newton-type methods under appropriate smoothness assumptions \citep[Theorems 2.2.3 and 3.1.1]{cartisEvaluationComplexityAlgorithms2022}.

We prove \cref{thm:direct case convergence} via a sequence of auxiliary lemmas (\cref{lemma:direct normal component descent and norm properties,lemma:direct case descent of tangent component,lemma:direct primal update descent on merit function,lemma:direct merit parameter boundedness,lemma:direct line search termination}). 
We first analyze the decrease in the merit function induced by the normal update (\cref{lemma:direct normal component descent and norm properties}), followed by the contribution of the tangent update (\cref{lemma:direct case descent of tangent component}). We then show that the merit parameter defined in \cref{eqn:direct merit parameter trial,eqn:direct merit parameter update} ensures that the resulting search direction is a descent direction for the merit function (\cref{lemma:direct primal update descent on merit function}). Finally, we establish that the merit parameter remains bounded (\cref{lemma:direct merit parameter boundedness}) and that the line search terminates for a sufficiently small step size (\cref{lemma:direct line search termination}), which together yield the result in \cref{thm:direct case convergence}.

We begin with the properties of the normal update.

\begin{lemma}[Normal Component Properties] \label{lemma:direct normal component descent and norm properties}
Under \cref{ass:jacobian regularity}, the normal component $\vv_k$ defined by \cref{eqn:exact normal update} satisfies
\begin{align*}
\vnorm{\JJ_k \vv_k + \cc_k}
&= 0,
\\
\frac{\vnorm{\cc_k}}{\kappa_\JJ}
\leq
\vnorm{\vv_k}
&\leq
\frac{\vnorm{\cc_k}}{\sigma_\JJ}.
\end{align*}
\end{lemma}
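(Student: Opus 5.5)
The plan is to work directly with the economy SVD $\JJ_k = \UU_k \bSigma_k \bOmega_k^\transpose$ from \cref{eqn:SVD of jacobian}. \cref{ass:jacobian regularity} provides the rank structure we need: since $\sigma_c(\xx_k) \geq \sigma_\JJ > 0$, the Jacobian has full row rank $r = c$. Hence $\UU_k \in \real^{c\times c}$ is square orthogonal, so $\UU_k\UU_k^\transpose = \eye$. All singular values also lie in $[\sigma_\JJ, \kappa_\JJ]$.

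For the feasibility identity, substitute \cref{eqn:exact normal update} into $\JJ_k\vv_k$:
\[
\JJ_k \vv_k = -\UU_k\bSigma_k\bOmega_k^\transpose\bOmega_k\bSigma_k^{-1}\UU_k^\transpose\cc_k = -\UU_k\UU_k^\transpose \cc_k = -\cc_k .
\]
This uses $\bOmega_k^\transpose\bOmega_k = \eye$ together with the square orthogonality of $\UU_k$, and it gives $\vnorm{\JJ_k\vv_k + \cc_k} = 0$. Full row rank is the only point where this step needs care. Without it, we would only get $\UU_k\UU_k^\transpose\cc_k$, the projection onto $\Range(\JJ_k)$, and the residual would not vanish.

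For the norm bounds, use the orthonormality of the columns of $\bOmega_k$ and the orthogonality of $\UU_k$:
\[
\vnorm{\vv_k} = \vnorm{\bSigma_k^{-1}\UU_k^\transpose\cc_k}, \qquad \vnorm{\UU_k^\transpose\cc_k} = \vnorm{\cc_k}.
\]
Since $\bSigma_k^{-1}$ is diagonal with entries in $[1/\kappa_\JJ, 1/\sigma_\JJ]$, this yields
\[
\frac{\vnorm{\cc_k}}{\kappa_\JJ} \leq \vnorm{\vv_k} \leq \frac{\vnorm{\cc_k}}{\sigma_\JJ}.
\]
As an alternative route to the lower bound, the identity $\cc_k = -\JJ_k\vv_k$ gives $\vnorm{\cc_k} \leq \kappa_\JJ\vnorm{\vv_k}$ directly.

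I expect no real obstacle here; the argument is routine linear algebra.
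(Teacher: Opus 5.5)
Your proof is correct and follows essentially the same route as the paper: the zero residual comes from $\JJ_k\JJ_k^\dagger = \eye$ under full row rank, which you verify explicitly through $\UU_k\UU_k^\transpose = \eye$. The norm bounds come from $\vnorm{\vv_k} = \vnorm{\bSigma_k^{-1}\UU_k^\transpose\cc_k}$ with the singular values confined to $[\sigma_\JJ,\kappa_\JJ]$, and you also make explicit that the lower bound needs $\vnorm{\UU_k^\transpose\cc_k} = \vnorm{\cc_k}$, which the paper leaves implicit in ``follows similarly.''
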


\begin{proof}
    The equality holds immediately from \cref{ass:jacobian regularity}, since $\JJ^\dagger_k$ acts as a right inverse, i.e., $\JJ_k\vv_k + \cc_k = - \JJ_k\JJ^\dagger_k \cc_k + \cc_k=\zero$. To show the inequalities, we note that 
    \begin{align*}
        \vnorm{\JJ^\dagger_k \cc_k} = \vnorm{\bSigma_k^{-1} \UU_k^\transpose \cc_k} \leq \frac{1}{\sigma_\JJ} \vnorm{\UU_k^\transpose \cc_k} \leq \frac{1}{\sigma_{\JJ}} \vnorm{\cc_k},
    \end{align*}
    and the lower bound follows similarly.
\end{proof}

Next, we consider the properties of the tangent update. In particular, we show that the tangent search direction satisfies key descent and norm bounds regardless of the value of $\FLAG$ returned by the projected CR algorithm (\cref{alg:conjugate residual}).

\begin{lemma}[Descent Properties of Tangent Component] \label{lemma:direct case descent of tangent component}
    In \cref{alg:direct primal-dual algorithm}, under \cref{ass:basic assumptions}, the call to \cref{alg:conjugate residual} with \Cref{term:direct} returns a tangent search direction $\ww_k$ satisfying
    \begin{align*}
        \dotprod{\gradf_k, \ww_k} &\leq - \gammatan \vnorm{\ww_k}^2, \tageq\label{eqn:exact tangent component descent}\\
        \vnorm{\ww_k} &\geq \omegatan \vnorm{\PPNJk \gradf_k }, \tageq\label{eqn:exact tangent component norm bound}
    \end{align*}
    where
    \begin{align*}
        \gammatan \defeq \min\left\{1, \sigma \right\},
        \qquad
        \omegatan \defeq \min\left\{\frac{\sigma}{\kappa_{\sL}^2}, \frac{\tautan}{\sqrt{\tautan^2 + \kappa_{\sL}^2}} \right\}.
    \end{align*}
\end{lemma}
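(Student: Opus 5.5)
The plan is to apply \cref{lemma:projected CR properties} with $\HH = \HH_k = \LHess(\xx_k,\blambda_{k+1})$, $\bgg = \gradf_k$, $\PP = \PPNJk = \bSS_k\bSS_k^\transpose$ (so $\ZZ = \bSS_k$), and $\tau = \tautan$. The argument then splits into the two cases $\FLAG_k = \SOL$ and $\FLAG_k = \LPC$.

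First, the hypotheses. Since $\bSS_k$ has orthonormal columns, $\vnorm{\HHb} = \vnorm{\bSS_k^\transpose \HH_k \bSS_k} \leq \vnorm{\HH_k} \leq \kappa_{\sL}$ by \cref{remark:Lagrangian Hessian boundedness}, so we may take $\kappa = \kappa_{\sL}$. The norm bounds are stated in terms of $\vnorm{\bbgg}$. Here
\[
\vnorm{\bbgg} = \vnorm{\bSS_k^\transpose \gradf_k} = \vnorm{\bSS_k\bSS_k^\transpose \gradf_k} = \vnorm{\PPNJk \gradf_k},
\]
which converts them directly into \cref{eqn:exact tangent component norm bound}.

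\textbf{Case $\FLAG_k = \SOL$.} The returned direction is $\ww_k = \dd^{(s)}$, where $s = t+1$ is the first index at which \cref{eqn:tangent component termination condition} holds. The LPC test must have failed at $\brrii$ for $i = 0,\ldots,t$: at $i=0$ via \cref{line:early LPC termination}, and at later $i$ via the termination block, which checks SOL before LPC, so each earlier iterate passed the curvature test. One should also confirm that $t \leq g-1$. This holds because CR on a grade-$g$ Krylov space reaches an exact solution by step $g$ (the residual becomes zero and the SOL test then holds trivially), so termination happens no later than that. Then \cref{eqn:SOL second order descent} gives $\dotprod{\gradf_k,\ww_k} \leq -\sigma\vnorm{\ww_k}^2 \leq -\gammatan\vnorm{\ww_k}^2$, and \cref{eqn:SOL step length} gives $\vnorm{\ww_k} \geq (\sigma/\kappa_{\sL}^2)\vnorm{\PPNJk\gradf_k} \geq \omegatan \vnorm{\PPNJk\gradf_k}$.

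\textbf{Case $\FLAG_k = \LPC$.} Here $\ww_k = \rrtt$, and LPC is detected either at $t=0$ or before the SOL condition holds. The ordering in \Cref{term:direct} guarantees exactly this. Then \cref{eqn:LPC step descent} gives $\dotprod{\gradf_k,\ww_k} = -\vnorm{\ww_k}^2 \leq -\gammatan\vnorm{\ww_k}^2$, since $\gammatan \leq 1$. For the norm bound, when $t>0$, \cref{eqn:LPC step length} gives the $\tautan/\sqrt{\tautan^2+\kappa_{\sL}^2}$ bound. When $t=0$, we have $\vnorm{\rrzero} = \vnorm{\bbgg}$, which is at least as large, since that factor is below $1$.

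The main obstacle is the bookkeeping around termination. One must verify that the hypotheses of \cref{lemma:projected CR properties} (no LPC through iteration $t$, and $t \leq g-1$) match exactly how \Cref{term:direct} orders its checks. One must also handle the degenerate case $\bbgg = 0$. In that case the LPC test at $t=0$ holds trivially, with $0 \leq 0$, so $\ww_k = 0$ and both inequalities hold trivially.
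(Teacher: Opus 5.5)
Your proposal is correct and follows essentially the same route as the paper. Both split on $\FLAG$, apply \cref{lemma:projected CR properties} with $\ZZ=\bSS_k$ and $\kappa=\kappa_{\sL}$, and convert the bounds via $\vnorm{\bSS_k^\transpose\gradf_k}=\vnorm{\PPNJk\gradf_k}$ and the definitions of $\gammatan$ and $\omegatan$; your extra checks ($t\le g-1$, the $t=0$ LPC exit, and $\bbgg=0$) are sound and make explicit what the paper's shorter proof leaves implicit.
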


\begin{proof}
    Suppose the call to \cref{alg:conjugate residual} returns $\FLAG=\SOL$. Then, \cref{eqn:Lagrangian Hessian boundedness} together with \cref{eqn:SOL second order descent,eqn:SOL step length} from \cref{lemma:projected CR properties} imply
    \begin{align*}
        \dotprod{\gradf_k, \ww_k} &\leq - \sigma \vnorm{\ww_k}^2, \\
        \vnorm{\ww_k} \geq \frac{\sigma}{\kappa_{\sL}^2} \vnorm{\bSS^\transpose_k \gradf_k} &= \frac{\sigma}{\kappa_{\sL}^2} \vnorm{\PPNJk \gradf_k},
    \end{align*}
    where the equality follows from orthogonality of $\bSS_k$. 
    On the other hand, suppose $\FLAG=\LPC$. Then \cref{eqn:Lagrangian Hessian boundedness} as well as \cref{eqn:LPC step descent,eqn:LPC step length} from \cref{lemma:projected CR properties} imply 
    \begin{align*}
        \dotprod{\ww_k, \gradf_k} &= - \vnorm{\ww_k}^2,\\
        \vnorm{\ww_k} &\geq \frac{\tautan}{\sqrt{\tautan^2 + \kappa_{\sL}^2}} \vnorm{\PPNJk \gradf_k },
    \end{align*}
    where we again use the orthogonality of $\bSS_k$. Combining the above bounds establishes the result.
\end{proof}
\cref{lemma:direct case descent of tangent component} reveals the utility of \cref{eqn:termination condition} as the outer termination condition, since the descent due to the tangent component is directly proportional to the projected gradient.

We now combine \cref{lemma:direct case descent of tangent component,lemma:direct normal component descent and norm properties} with the merit parameter update \cref{eqn:direct merit parameter trial,eqn:direct merit parameter update} to show that the resulting primal update is a descent direction for the merit function \cref{eqn:merit function}.

\begin{lemma}[Descent on the Merit Function] \label{lemma:direct primal update descent on merit function}
    Under \cref{ass:jacobian regularity,ass:basic assumptions}, the primal search direction $\pp_k$ and merit parameter $\pi_k$ computed in \cref{alg:direct primal-dual algorithm} satisfy
    \begin{align*}
        \Db(\xx_k, \pp_k; \pi_k)
        \leq
        -\gammatan \vnorm{\ww_k}^2
        - \rho \pi_k \vnorm{\cc_k},
    \end{align*}
    where $\Db$ is defined in \cref{lemma:descent lemma}.
\end{lemma}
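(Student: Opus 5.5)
We expand $\Db(\xx_k,\pp_k;\pi_k)$ directly from its definition in \cref{lemma:descent lemma} with $\pp_k = \vv_k + \ww_k$:
\[
\Db(\xx_k,\pp_k;\pi_k) = \dotprod{\gradf_k,\vv_k} + \dotprod{\gradf_k,\ww_k} + \pi_k\bigl(\vnorm{\cc_k + \JJ_k\vv_k + \JJ_k\ww_k} - \vnorm{\cc_k}\bigr).
\]
Since the projected CR call returns $\ww_k \in \Range(\PPNJk) = \Null(\JJ_k)$ regardless of $\FLAG_k$, we have $\JJ_k\ww_k = 0$. By \cref{lemma:direct normal component descent and norm properties}, $\cc_k + \JJ_k\vv_k = 0$. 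Hence the constraint term reduces to $-\pi_k\vnorm{\cc_k}$. The tangent term is bounded by \cref{eqn:exact tangent component descent} from \cref{lemma:direct case descent of tangent component}, which gives $\dotprod{\gradf_k,\ww_k} \le -\gammatan\vnorm{\ww_k}^2$. It remains to show
\[
\dotprod{\gradf_k,\vv_k} - \pi_k\vnorm{\cc_k} \le -\rho\pi_k\vnorm{\cc_k},
\]
that is, $\dotprod{\gradf_k,\vv_k} \le (1-\rho)\pi_k\vnorm{\cc_k}$.

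We split into two cases according to the trial parameter \cref{eqn:direct merit parameter trial}.

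\emph{Case $\vnorm{\cc_k} = 0$.} By the upper bound in \cref{lemma:direct normal component descent and norm properties}, $\vv_k = 0$, so both sides vanish and the inequality holds with equality.

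\emph{Case $\vnorm{\cc_k} > 0$.} The update \cref{eqn:direct merit parameter update} gives $\pi_k \ge \pitrial_k = \dotprod{\gradf_k,\vv_k}/\bigl((1-\rho)\vnorm{\cc_k}\bigr)$. Multiplying by $(1-\rho)\vnorm{\cc_k} > 0$ yields exactly the required inequality. This holds whatever the sign of $\dotprod{\gradf_k,\vv_k}$: if it is negative, then $\pi_k > 0$ (since $\pi_k \ge \pi_{-1} > 0$) and the inequality is trivially true.

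Combining the three pieces gives the claim. The argument is elementary; the only point needing care is the justification that $\JJ_k\ww_k = 0$ in both the $\SOL$ and $\LPC$ branches. This follows because the returned vector is $\bSS_k\bddtt$ or $\bSS_k\brrtt$, and both lie in $\Range(\bSS_k) = \Null(\JJ_k)$. We also need $\pi_k > 0$, which follows from the monotone update and $\pi_{-1} > 0$.
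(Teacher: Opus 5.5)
Your proposal is correct and matches the paper's proof step for step: expand $\Db$ using $\JJ_k\ww_k=0$ and $\cc_k+\JJ_k\vv_k=0$, bound the tangent term via the tangent descent lemma, and handle $\dotprod{\gradf_k,\vv_k}$ by the case split on $\vnorm{\cc_k}$ together with the merit parameter update. Your separate remark about the sign of $\dotprod{\gradf_k,\vv_k}$ is harmless but unnecessary, since multiplying $\pitrial_k \le \pi_k$ by $(1-\rho)\vnorm{\cc_k}>0$ already covers every sign.
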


\begin{proof}
From \cref{lemma:descent lemma} and $\ww_k \in \Null(\JJ_k)$, we have
\begin{align*}
    \Db(\xx_k, \pp_k; \pi_k)
    =
    \dotprod{\grad f_k, \vv_k}
    +
    \dotprod{\grad f_k, \ww_k}
    +
    \pi_k\big(\vnorm{\cc_k + \JJ_k\vv_k} - \vnorm{\cc_k}\big).
\end{align*}

Applying \cref{lemma:direct normal component descent and norm properties,lemma:direct case descent of tangent component} gives
\begin{align*}
    \Db(\xx_k, \pp_k; \pi_k)
    \leq
    -\gammatan \vnorm{\ww_k}^2
    + \dotprod{\grad f_k, \vv_k}
    - \pi_k \vnorm{\cc_k}.
\end{align*}

If $\vnorm{\cc_k} = 0$, then $\vv_k = 0$ (see \cref{eqn:exact normal update}), and the result follows immediately. Otherwise, rearranging \cref{eqn:direct merit parameter trial} and using \cref{eqn:direct merit parameter update} yields
\[
\dotprod{\grad f_k, \vv_k} \leq (1-\rho)\pi_k \vnorm{\cc_k}.
\]
Substituting into the previous bound gives
\begin{align*}
    \Db(\xx_k, \pp_k; \pi_k)
    &\leq
    -\gammatan \vnorm{\ww_k}^2
    + (1-\rho)\pi_k \vnorm{\cc_k}
    - \pi_k \vnorm{\cc_k} \\
    &=
    -\gammatan \vnorm{\ww_k}^2
    - \rho \pi_k \vnorm{\cc_k}.
\end{align*}
\end{proof}

\cref{lemma:direct primal update descent on merit function} implies that the primal update computed in \cref{alg:direct primal-dual algorithm} is a descent direction for the merit function \cref{eqn:merit function}, ensuring that the Armijo line search in \cref{eqn:line search descent criteria} is well defined. The next step in our analysis is to show that both the merit parameter and the step size selected by the line search remain bounded.

\begin{lemma}[Merit Parameter Boundedness]\label{lemma:direct merit parameter boundedness}
    Under \cref{ass:jacobian regularity,ass:basic assumptions}, the penalty parameter selected in \cref{alg:direct primal-dual algorithm} is uniformly bounded, i.e., there exists $0 < \pib  < \infty$ such that 
    $$\pi_{-1} \leq \pi_k \leq \pib, \quad \forall k.$$
\end{lemma}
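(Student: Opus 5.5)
The plan is to bound the trial parameter $\pitrial_k$ uniformly in $k$, and then read off the bound on $\pi_k$ from the update rule. The lower bound is immediate. By \cref{eqn:direct merit parameter update}, $\pi_k = \max\{\pitrial_k, \pi_{k-1}\} \geq \pi_{k-1}$, so the sequence $\{\pi_k\}$ is monotonically nondecreasing. Induction then gives $\pi_k \geq \pi_{-1}$ for all $k$. For the upper bound, it is enough to produce a constant $\pi^\star$, independent of $k$, with $\pitrial_k \leq \pi^\star$ for every $k$. Unrolling the max recursion then gives $\pi_k \leq \max\{\pi_{-1}, \pi^\star\} \defeq \pib$.

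To bound $\pitrial_k$, I split into two cases according to \cref{eqn:direct merit parameter trial}.

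\emph{Case $\vnorm{\cc_k} = 0$.} Here $\pitrial_k = 0$ and there is nothing to prove.

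\emph{Case $\vnorm{\cc_k} \neq 0$.} I apply Cauchy--Schwarz to the numerator and then the upper bound on the normal step from \cref{lemma:direct normal component descent and norm properties}:
\[
\dotprod{\grad f_k, \vv_k} \leq \vnorm{\grad f_k}\,\vnorm{\vv_k} \leq \kappa_g \frac{\vnorm{\cc_k}}{\sigma_\JJ}.
\]
The bound $\vnorm{\grad f_k} \leq \kappa_g$ comes from \cref{ass:item:gradient is bounded}, and $\vnorm{\vv_k} \leq \vnorm{\cc_k}/\sigma_\JJ$ comes from \cref{ass:jacobian regularity} via \cref{lemma:direct normal component descent and norm properties}. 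Dividing by $(1-\rho)\vnorm{\cc_k}$, the factor $\vnorm{\cc_k}$ cancels and
\[
\pitrial_k \leq \frac{\kappa_g}{(1-\rho)\sigma_\JJ} \defeq \pi^\star.
\]
This gives $\pib = \max\{\pi_{-1}, \kappa_g/((1-\rho)\sigma_\JJ)\} < \infty$.

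The only step needing care is the cancellation of $\vnorm{\cc_k}$ in the ratio. It depends on the normal step scaling linearly with $\vnorm{\cc_k}$, and that scaling in turn depends on the uniform lower bound $\sigma_\JJ$ on the smallest singular value of the Jacobian. Without that uniform bound, $\pitrial_k$ could blow up as $\vnorm{\cc_k} \to 0$. Apart from this point the argument is routine.
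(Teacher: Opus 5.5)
Your proposal is correct and matches the paper's proof: you bound $\pitrial_k$ by $\kappa_g/((1-\rho)\sigma_\JJ)$ using Cauchy--Schwarz, the gradient bound, and the upper bound on $\vnorm{\vv_k}$ from \cref{lemma:direct normal component descent and norm properties}, and you obtain both bounds on $\pi_k$ from the max update with the same $\pib = \max\{\kappa_g/((1-\rho)\sigma_\JJ), \pi_{-1}\}$. The paper leaves the $\vnorm{\cc_k}=0$ case and the monotone lower bound implicit in the update rule; you state them explicitly.
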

\begin{proof}
    Using \cref{eqn:direct merit parameter trial}, together with \cref{ass:item:gradient is bounded} and \cref{lemma:direct normal component descent and norm properties}, we obtain
    \begin{align*}
        \pitrial_k
        =
        \frac{\dotprod{\grad f_k, \vv_k}}{(1-\rho)\vnorm{\cc_k}}
        \leq
        \frac{\vnorm{\grad f_k}\,\vnorm{\vv_k}}{(1-\rho)\vnorm{\cc_k}}
        \leq
        \frac{\kappa_g}{(1-\rho)\sigma_{\JJ}}.
    \end{align*}
    Define
    \begin{align*}
        \pib \defeq \max\left\{\frac{\kappa_g}{(1-\rho)\sigma_{\JJ}}, \pi_{-1}\right\}.
    \end{align*}
    Then the update rule in \cref{eqn:direct merit parameter update} implies
    \begin{align*}
        0 < \pi_{-1} \leq \pi_k \leq \pib < \infty.
    \end{align*}
\end{proof}

\begin{lemma}[Line Search Termination] \label{lemma:direct line search termination}
    Under \cref{ass:basic assumptions,ass:jacobian regularity}, there exists $\alphab > 0$ such that, for all iterations of \cref{alg:direct primal-dual algorithm}, the step size selected by the line search satisfies $\alpha_k \geq \alphab$.
\end{lemma}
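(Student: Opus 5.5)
The plan is to combine the merit function upper bound \cref{eqn:merit function upper bound} from \cref{lemma:descent lemma} with the descent estimate of \cref{lemma:direct primal update descent on merit function}. The goal is to show that the Armijo condition \cref{eqn:line search descent criteria} holds for every $\alpha$ below an explicit threshold that does not depend on $k$. Backtracking then returns $\alpha_k \geq \min\{1, \theta \cdot \text{threshold}\}$.

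By \cref{eqn:merit function upper bound}, the condition \cref{eqn:line search descent criteria} is implied whenever
\[
\frac{\alpha(\pi_k L_c + L_f)}{2}\vnorm{\pp_k}^2 \leq -(1-\eta)\Db(\xx_k,\pp_k;\pi_k).
\]
\cref{lemma:direct merit parameter boundedness} bounds the left-hand coefficient by $(\pib L_c + L_f)/2$. \cref{lemma:direct primal update descent on merit function} gives $-\Db \geq \gammatan\vnorm{\ww_k}^2 + \rho\pi_{-1}\vnorm{\cc_k}$, where I also use $\pi_k \geq \pi_{-1}$. It remains to bound $\vnorm{\pp_k}^2$ by a multiple of this right-hand side.

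First I write $\vnorm{\pp_k}^2 = \vnorm{\vv_k}^2 + \vnorm{\ww_k}^2$, which holds by orthogonality since $\vv_k \in \Range(\JJ_k^\transpose)$ and $\ww_k \in \Null(\JJ_k)$. The tangent part is already in the right form. The normal part is handled by \cref{lemma:direct normal component descent and norm properties} together with \cref{ass:item:constraint is bounded}:
\[
\vnorm{\vv_k}^2 \leq \frac{\vnorm{\cc_k}^2}{\sigma_\JJ^2} \leq \frac{\kappa_c}{\sigma_\JJ^2}\vnorm{\cc_k}.
\]
This converts the quadratic dependence on $\vnorm{\cc_k}$ into the linear term that the descent bound supplies. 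This conversion is the only slightly delicate step, and the boundedness of $\vnorm{\cc}$ on $\sX$ is exactly what makes it work. Hence
\[
\vnorm{\pp_k}^2 \leq M\big(\gammatan\vnorm{\ww_k}^2 + \rho\pi_{-1}\vnorm{\cc_k}\big), \qquad M \defeq \max\Big\{\frac{1}{\gammatan}, \frac{\kappa_c}{\rho\pi_{-1}\sigma_\JJ^2}\Big\}.
\]

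It follows that the Armijo condition holds for all
\[
\alpha \leq \hat\alpha \defeq \frac{2(1-\eta)}{M(\pib L_c + L_f)}.
\]
If $\pp_k = 0$, the condition holds trivially with $\alpha_k = 1$. Otherwise, backtracking from $1$ by the factor $\theta$ stops no later than the first trial value below $\hat\alpha$, so we may take $\alphab \defeq \min\{1, \theta\hat\alpha\}$. The remaining point to check is that the trial points lie in $\sX$, so that the smoothness constants and bounds apply there. This holds by the definition of $\sX$ in \cref{ass:jacobian regularity}, which includes all trial points.
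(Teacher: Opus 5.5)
Your proposal is correct and follows the paper's argument essentially step for step. It uses the same ingredients: the merit-function upper bound, the orthogonal splitting $\vnorm{\pp_k}^2=\vnorm{\vv_k}^2+\vnorm{\ww_k}^2$, the conversion $\vnorm{\vv_k}^2\leq(\kappa_c/\sigma_\JJ^2)\vnorm{\cc_k}$ via bounded constraints, and backtracking giving $\alphab=\min\{1,\theta\hat\alpha\}$. The one cosmetic difference is that your constant $M$ matches the two descent coefficients term by term, rather than using the paper's separate $\min\{\gammatan,\pi_{-1}\rho\}$ and $\max\{1,\kappa_c/\sigma_\JJ^2\}$; this gives a slightly larger, but equally $k$-independent, threshold.
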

\begin{proof}
Let $\Delta_k(\alpha) \defeq \phi(\xx_k+\alpha\pp_k; \pi_k) - \phi(\xx_k; \pi_k) - \eta \alpha \Db_k$. Applying \cref{eqn:merit function upper bound}, we have
\begin{align*}
    \Delta_k(\alpha) 
    &\leq \alpha(1-\eta)  \Db_k + \frac{\alpha^2}{2} \left( \pi_k L_c + L_f\right) \vnorm{\pp_k}^2. 
\end{align*}
Additionally, from $\vnorm{\pp_k}^2 = \vnorm{\ww_k}^2 + \vnorm{\vv_k}^2$, \cref{lemma:direct primal update descent on merit function}, the boundedness of the merit parameter (\cref{lemma:direct merit parameter boundedness}), the upper bound on the normal update (\cref{lemma:direct normal component descent and norm properties}), and the upper bound on the constraints (\cref{ass:item:constraint is bounded}), we have
\begin{align*}
    \Delta_k(\alpha) 
    &\leq -\alpha(1-\eta) (\gammatan \vnorm{\ww_k}^2 +\pi_{k}\rho\vnorm{\cc_k}) + \frac{\alpha^2}{2} \left( \pi_k L_c + L_f\right) \left( \vnorm{\ww_k}^2 + \vnorm{\vv_k}^2 \right) \\
    &\leq -\alpha(1-\eta) (\gammatan \vnorm{\ww_k}^2 +\pi_{-1}\rho\vnorm{\cc_k}) + \frac{\alpha^2}{2} \left( \pib L_c + L_f\right) \left( \vnorm{\ww_k}^2 + \frac{1}{\sigma_{\JJ}^2} \vnorm{\cc_k}^{2} \right) \\
    &\leq -\alpha(1-\eta) (\gammatan \vnorm{\ww_k}^2 +\pi_{-1}\rho\vnorm{\cc_k}) + \frac{\alpha^2}{2} \left( \pib L_c + L_f\right) \left( \vnorm{\ww_k}^2 + \frac{\kappa_c}{\sigma_{\JJ}^2} \vnorm{\cc_k} \right) \\
    &\leq -\alpha(1-\eta)\min\{\gammatan, \pi_{-1} \rho \}(\vnorm{\ww_k}^2 +\vnorm{\cc_k}) + \frac{\alpha^2}{2}\left( \pib L_c + L_f\right) \max \left\{1, \frac{\kappa_c}{\sigma_{\JJ}^2} \right\}(\vnorm{\ww_k}^2 + \vnorm{\cc_k})  \\
    &= \alpha\left(-(1-\eta)\min\{\gammatan, \pi_{-1} \rho \} + \frac{\alpha}{2}\left( \pib L_c + L_f\right) \max \left\{1, \frac{\kappa_c}{\sigma_{\JJ}^2} \right\}   \right)\left(\vnorm{\ww_k}^2 +\vnorm{\cc_k} \right).
\end{align*}
Hence, $\Delta_k(\alpha) \leq 0$ whenever
\begin{align*}
    \alpha \leq
    \frac{2(1-\eta)\min\{ \gammatan, \pi_{-1} \rho \}}
    {\left( \pib L_c + L_f\right) \max \left\{1, \kappa_c/\sigma_{\JJ}^2 \right\}}.
\end{align*}
Since the line search uses backtracking, it may undershoot this threshold by at most a factor $\theta$. Therefore, the step size returned by \cref{alg:direct primal-dual algorithm} satisfies
\begin{align}
    \label{eq:alphab}
    \alpha_k \geq \min\left\{
    1,\;
    \theta
    \frac{2(1-\eta)\min\{ \gammatan, \pi_{-1} \rho \}}
    {\left( \pib L_c + L_f\right) \max \left\{1, \kappa_c/\sigma_{\JJ}^2 \right\}}
    \right\}
    \defeq \alphab.
\end{align}
\end{proof}

We are now ready to prove \cref{thm:direct case convergence}.

\begin{proof}[Proof of \cref{thm:direct case convergence}]
    Define 
    \begin{align}
    \label{eq:direct case Gamma}
        \Gamma_c \defeq \alphab \eta \rho, \quad \text{and} \quad \Gamma_p \defeq \frac{\alphab \eta \gammatan \omegatan^2}{\pib}, 
    \end{align}
    where $\gammatan$ and $\omegatan$ are as in \cref{lemma:direct case descent of tangent component} and $\alphab$ is as in \cref{eq:alphab}.
    Suppose that the termination condition \cref{eqn:termination condition} fails to hold for iterations $k = 0,\ldots,K-1$. We partition the iterations up to $K-1$ into two sets 
    \begin{align*}
        \sK_c = \left\{ k \in [K-1] \mid \vnorm{\cc_k} > \epsc \right\}, \qquad \sK_p = [K-1]\setminus \sK_c.
    \end{align*}
    Suppose $k \in \sK_c$. 
    By \cref{eqn:line search descent criteria} and \cref{lemma:direct line search termination,lemma:direct primal update descent on merit function}, we have
    \begin{align*}
        \frac{\phi(\xx_{k+1}; \pi_k) - \phi(\xx_k; \pi_k)}{\pi_k} 
        &\leq \frac{\alpha_k \eta}{\pi_k}\Db(\xx_k, \pp_k; \pi_k) \\
        &\leq -\alphab \eta \rho \vnorm{\cc_k} \\ 
        &\leq -\alphab \eta \rho \epsc = - \Gamma_c \epsc.
    \end{align*}
    On the other hand, for $k \in \sK_p$, we have $\vnorm{\PPNJk \gradf_k} > \epsp $. Applying \cref{lemma:direct primal update descent on merit function,lemma:direct merit parameter boundedness,lemma:direct line search termination}, the lower bound on $\vnorm{\ww_k}$ from \cref{lemma:direct case descent of tangent component}, and the line search criteria \cref{eqn:line search descent criteria}, we have
    \begin{align*}
        \frac{\phi(\xx_{k+1}; \pi_k) - \phi(\xx_k; \pi_k)}{\pi_k} 
        &\leq \frac{\alpha_k \eta}{\pi_k}\Db(\xx_k, \pp_k; \pi_k) \\
        &\leq -\frac{\alphab \eta \gammatan}{\pib} \vnorm{\ww_k}^2 \\ 
        &\leq - \frac{\alphab \eta \gammatan \omegatan^2 \epsp^2}{\pib} = - \Gamma_p \epsp^2.
    \end{align*}
    Since $\sK_c$ and $\sK_p$ partition $[K-1]$, the above bounds give
    \begin{align*}
        \sum_{k=0}^{K-1} \frac{\phi(\xx_{k+1}; \pi_k) - \phi(\xx_k; \pi_k)}{\pi_k} &= \sum_{k \in \sK_c} \frac{\phi(\xx_{k+1}; \pi_k) - \phi(\xx_k; \pi_k)}{\pi_k} + \sum_{k \in \sK_p} \frac{\phi(\xx_{k+1}; \pi_k) - \phi(\xx_k; \pi_k)}{\pi_k} \\
         &\leq -|\sK_c| \Gamma_c \epsc - |\sK_p| \Gamma_p \epsp^2 \\
        &\leq
        -\min\{\Gamma_c,\Gamma_p\}
        \left(
            |\sK_c|\epsc + |\sK_p|\epsp^2
        \right) \\
        &\leq
        -\min\{\Gamma_c,\Gamma_p\} \min\{\epsc,\epsp^2\}
        \left(
            |\sK_c| + |\sK_p|
        \right) \\
        &= -K\min\{\Gamma_c, \Gamma_p\} \min\{\epsc,\epsp^2\}.
    \end{align*}
    On the other hand, from the definition of the merit function \cref{eqn:merit function},
    \begin{align*}
        \sum_{k=0}^{K-1} \frac{\phi(\xx_{k+1}; \pi_k) - \phi(\xx_k; \pi_k)}{\pi_k} 
        &= \sum_{k=0}^{K-1} \frac{f_{k+1}}{\pi_k} - \frac{f_k}{\pi_k} +  \vnorm{\cc_{k+1}} - \vnorm{\cc_k}  \\
        &= \sum_{k=0}^{K-1} \frac{f_{k+1}}{\pi_{k+1}} - \frac{f_{k}}{\pi_k} + f_{k+1}\left(\frac{1}{\pi_{k}} -\frac{1}{\pi_{k+1}}\right) + \vnorm{\cc_{k+1}} - \vnorm{\cc_k}.         
    \end{align*}
    Applying a telescoping argument, the monotonicity of $\pi_{k}$, $f_k \geq \flow$ and $\vnorm{\cc_k} \geq 0$, we obtain
    \begin{align*}
        \sum_{k=0}^{K-1} \frac{\phi(\xx_{k+1}; \pi_k) - \phi(\xx_k; \pi_k)}{\pi_k} 
        &\geq \frac{f_K}{\pi_K} - \frac{f_0}{\pi_0} + \flow \left(\frac{1}{\pi_0} - \frac{1}{\pi_K}\right) + \vnorm{\cc_{K}} - \vnorm{\cc_0} \\
        &\geq \frac{\flow}{\pi_K} - \frac{f_0}{\pi_0} + \flow \left(\frac{1}{\pi_0} - \frac{1}{\pi_K}\right) - \vnorm{\cc_0} \\
        &\geq  \frac{1}{\pi_{-1}}\left(\flow - f_0 \right) - \vnorm{\cc_0}.
    \end{align*}
    Combining the lower and upper bounds, we deduce
    \begin{align*}
        K \leq \frac{(f_0 - \flow)/\pi_{-1} + \vnorm{\cc_0}}{\min\{\Gamma_c, \Gamma_p\} \min\{\epsc,\epsp^2\}}.
    \end{align*}
    This contradicts the definition of $K$ in \cref{thm:direct case convergence}.
\end{proof}

\section{Our Algorithm: Indirect Normal Computation} \label{sec:indirect normal approach}
We now extend the framework of \cref{sec:direct algorithm statement} to the \emph{indirect} setting, suitable for problems with a large number of constraints, where forming and factorizing $\JJ$ is impractical. In this regime, the normal component problem \cref{eqn:normal component problem} is solved inexactly using only Jacobian- and Jacobian-transpose-vector products. Consequently, a complete orthonormal basis for $\Range(\JJ^\transpose)$ is not readily available, and the projection $\PPNJ$ required for the tangent computation cannot be formed explicitly.
Nevertheless, we show that the projection can be approximated by combining information from the inexact normal solve with iterative range-finding techniques. Crucially, we derive practical termination conditions for the inner solvers that ensure the approximation is sufficiently accurate to preserve the convergence guarantees established for the direct approach.
In \cref{sec:indirect algorithm setup}, we motivate this construction and highlight the key differences from the direct case. The corresponding convergence analysis is presented in \cref{sec:indirect convergence analysis}.

\subsection{Algorithm and Setup} \label{sec:indirect algorithm setup}

We now describe the indirect algorithm (\cref{alg:indirect primal-dual algorithm}). Much of the underlying setup remains the same as in the direct case: our goal is to compute a primal search direction that is a descent direction for the merit function \cref{eqn:merit function}. An Armijo \citep{armijo_line_search_1966} line search is then used to select a step size that produces sufficient decrease in the merit function according to \cref{eqn:line search descent criteria}. The key differences from the direct case in \cref{alg:direct primal-dual algorithm} are twofold. First, we no longer solve the normal component problem exactly. Second, we replace the exact null space projector with an approximate projection, $\hPP \approx \PPNJ$, with corresponding adjustments to the tangent computation and outer termination conditions.

To motivate the modifications required in the indirect setting, we revisit the upper bound on the directional derivative derived in \cref{lemma:descent lemma}. For a search direction decomposed as $\pp = \ww + \vv$, we have
\begin{align*}
    \Db(\xx, \pp; \pi) &= \dotprod{\gradf, \pp} + \pi (\vnorm{\JJ \pp + \cc} - \vnorm{\cc}) \\ 
    &\leq \dotprod{\gradf, \ww} + \dotprod{\gradf, \vv} + \pi(\vnorm{\JJ\ww} + \vnorm{\JJ\vv + \cc} - \vnorm{\cc}). 
    \tageq\label{eqn:indirect case direction derivative Jw bound}
\end{align*}
Focusing on the normal component terms in \cref{eqn:indirect case direction derivative Jw bound}, we see that any inexact normal update for which the residual, $\|\JJ\vv + \cc\|$, is small relative to the constraint violation, $\|\cc\|$, ensures that the normal-component terms provide descent in \cref{eqn:indirect case direction derivative Jw bound}, provided that $\pi$ is sufficiently large.

The picture is more complicated for the tangent component. In particular, the upper bound in \cref{eqn:indirect case direction derivative Jw bound} includes an additional $\vnorm{\JJ\ww}$ term, which is absent in the direct case because $\ww \in \Null(\JJ)$. Consequently, even if the descent conditions from the direct case (i.e., \cref{lemma:direct case descent of tangent component,lemma:direct normal component descent and norm properties}) continue to hold, there may be no choice of merit parameter that guarantees $\pp$ is a descent direction for the merit function. This observation reveals the central difficulty in the indirect setting: the approximate projector we construct, $\hPP \approx \PPNJ$, must control the quantity $\|\JJ\ww\|$ within the tangent iterations.

Our approach to computing the approximate projector $\hPP$ is to employ an auxiliary \textit{range-finding} algorithm that can incorporate range-space information from the inexact normal solve. The algorithm identifies an orthonormal basis $\hbOmega$ such that $\Range(\hbOmega) \subseteq \Range(\JJ^\transpose)$. We then define 
\[\hPP \defeq \eye - \hbOmega\hbOmega^\transpose. \tageq\label{eqn:approximate projector formula}\]
The resulting operator $\hPP$ remains an orthogonal projection and, crucially, \textit{overestimates} the null space in the sense that $\Null(\JJ) \subseteq \Range(\hPP)$. 

In the remainder of this section, we describe the key details of the indirect algorithm. We begin with a discussion of the inexact normal component update algorithm and the relevant termination conditions. We then analyze how the approximate projection, $\hPP$, affects the tangent computation. Specifically, we isolate a simple condition that the approximate projector must satisfy to attain descent in the tangent component, given in \cref{eqn:null space gradient termination condition}. Subsequently, we consider how the outer termination conditions and the dual variable update must be modified in the indirect case. With these conditions on the approximate projector in place, we then discuss a concrete instance (\cref{example:Krylov Range-Finder}) of a range-finding algorithm suitable to our setting. Finally, we present the complete algorithm in \cref{alg:indirect primal-dual algorithm}.

\paragraph{Normal Component (Inexact Computation).}
We solve \cref{eqn:normal component problem} inexactly using the least-squares minimal residual (LSMR) method \citep{fongLSMRIterativeAlgorithm2011}. LSMR is a Krylov subspace method, closely related to LSQR \citep{paigeLSQRAlgorithmSparse1982a}, based on the Golub-Kahan bidiagonalization \citep{golub_matrix_2013} applied to $\JJ$. The bidiagonalization process uses Jacobian-vector and Jacobian-transpose-vector products to iteratively construct a basis for the Krylov subspace $\sK_t(\JJ^\transpose\JJ, \JJ^\transpose\cc)$, thereby avoiding the need to form $\JJ$ explicitly. Using this Krylov basis, LSMR produces a sequence of iterates approximating the solution of \cref{eqn:normal component problem} via the subproblem
\begin{align*}
    \vvtt
    =
    \argmin_{\vv}
    \vnorm{\JJ^\transpose\JJ \vv + \JJ^\transpose \cc}
    \qquad
    \text{s.t.}
    \qquad
    \vv \in \sK_t(\JJ^\transpose\JJ, \JJ^\transpose \cc).
    \tageq\label{eqn:indirect normal component computation}
\end{align*}
From the subproblem in \cref{eqn:indirect normal component computation}, it is clear that LSMR is equivalent to applying the CR algorithm to the normal equations associated with \cref{eqn:normal component problem}. This observation motivates our use of LSMR rather than LSQR, since CR is known to be significantly more stable than CG (recall that LSQR corresponds to CG applied to the normal equations) for positive semidefinite systems \citep{limConjugateDirectionMethods2024}. Moreover, this interpretation allows us to exploit several favorable properties of CR to obtain guarantees for $\vvtt$ analogous to those in \cref{lemma:direct normal component descent and norm properties}.

Computing an approximate solution to \cref{eqn:normal component problem} using LSMR is straightforward. As we show in \cref{lemma:inexact normal component properties}, the iteration can be terminated once the relative residual condition
\begin{align*}
    \vnorm{\JJ \vvtt + \cc}
    \leq
    \taunorm \vnorm{\cc},
    \tageq\label{eqn:inexact normal component termination condition}
\end{align*}
is satisfied, where $\taunorm \in [0,1)$ is a user-chosen parameter. This condition is readily available within the LSMR iterations and is well posed since, under \cref{ass:jacobian regularity}, the normal component problem \cref{eqn:normal component problem} admits a zero-residual solution. The resulting inexact normal update is summarized in \cref{alg:inexact normal component}. Note that in \cref{alg:inexact normal component} we allow for optional storage of the Krylov vectors, $\tbOmega$, generated by the LSMR procedure. These vectors are orthonormal, satisfy $\Range(\tbOmega) \subseteq \Range(\JJ^\transpose)$, and can be utilized to help construct $\hPP$.

\begin{algorithm} 
\renewcommand{\baselinestretch}{1.25}\selectfont 
\begin{algorithmic}[1]
    \Require{$\taunorm \in (0,1)$, Jacobian-vector and Jacobian-transpose-vector product oracles, $\vv \mapsto \JJ\vv$ and $\uu \mapsto \JJ^\transpose \uu$.}
    \State $\tbOmega = [\,]$
    \For{$t=1,2,\ldots$}
        \State Apply one step of LSMR to compute the iterate $\vvtt$ for \cref{eqn:indirect normal component computation} and the  Krylov vector $\tbomega_t$.
        \State $\tbOmega \gets [\tbOmega, \tbomega_t]$. \Comment{Optional: store the Krylov basis $\tbOmega \subseteq \Range(\JJ^\transpose)$.}
        \If{$\vvtt$ satisfies \cref{eqn:inexact normal component termination condition} with $\taunorm$} \Comment{Residual available within LSMR.}
            \State \Return{$\vvtt,\tbOmega$}
        \EndIf
    \EndFor
\caption{Inexact Normal Component}
\label{alg:inexact normal component}
\end{algorithmic}
\end{algorithm}

Once the normal step is obtained, the merit parameter can be updated in a manner analogous to the direct case. Specifically, at each iteration we define
\begin{align*}
    \pitrial_k = 
    \begin{cases}
    \displaystyle
    \frac{\dotprod{\grad f_k, \vv_k}}
         {(1-\rho)\gammanorm\vnorm{\cc_k}} 
    & \text{if} \ \vnorm{\cc_k} \neq 0, \\[2ex]
    0 
    & \text{if} \ \vnorm{\cc_k} = 0,
    \end{cases}
    \tageq\label{eqn:indirect merit parameter trial}
\end{align*}
where $\rho \in (0,1)$ is a user-chosen parameter and $\gammanorm \defeq 1 - \taunorm$. We then update the merit parameter as in \cref{eqn:direct merit parameter update}. Using properties of the LSMR iterates, we can bound $\pi_k$ in a similar way to \cref{lemma:direct merit parameter boundedness}.

\paragraph{Tangent Component.} 
We now investigate the conditions on the approximate projector that ensure the tangent component achieves descent. In the absence of $\PPNJ$, we reformulate \cref{eqn:inexact tangent component CR subproblem} to make use of the approximate projection $\hPP$. Specifically, letting $\hPP = \bSSh\bSSh^\transpose$, we consider a conjugate residual formulation analogous to \cref{eqn:inexact tangent component CR subproblem} with 
\begin{align}
\label{eqn:indirect inexact tangent component CR subproblem}
\tHH = \bSSh^\transpose \LHess \bSSh, \qquad \text{and} \qquad \tbgg = \bSSh^\transpose \gradf.
\end{align}
The final $\FLAG=\SOL{}$ and $\FLAG=\LPC{}$ search directions can be formed as $\ww = \bSSh \tddtt$ or $\ww = \bSSh \trrtt_\tangent$, respectively.

Unlike the direct case, the above formulation no longer corresponds to a Krylov approximation to \cref{eqn:tangent component problem}. In particular, $\tbgg = \bSSh^\transpose\gradf$ need not equal $\bSSh^\transpose\Lgrad$ unless $\bSSh = \bSS$. Nevertheless, we will show that many of the descent properties established in the direct case (see \cref{lemma:direct case descent of tangent component}) still hold for the search direction $\ww$ produced by projected CR with \cref{eqn:indirect inexact tangent component CR subproblem}; see \cref{lemma:indirect case descent of tangent component}. By construction, \cref{eqn:indirect inexact tangent component CR subproblem} produces a direction $\ww \in \Range(\hPP)$. Thus, when $\hPP$ approximates $\PPNJ$ well, we may expect $\ww$ to lie close to $\Null(\JJ)$. As discussed earlier, controlling the quantity $\vnorm{\JJ\ww}$ is crucial for guaranteeing descent of the merit function. Indeed, inspecting \cref{eqn:indirect case direction derivative Jw bound} shows that the term $\vnorm{\JJ\ww}$ can be balanced against the descent due to $\dotprod{\gradf,\ww}$, provided that
\begin{align*}
    \vnorm{\JJ\ww} \leq -\frac{\taunull}{\pi}\dotprod{\gradf,\ww},
    \tageq\label{eqn:null space termination condition}
\end{align*}
where $\taunull \in (0,1)$ is a user-chosen null-space tolerance and $\pi$ is the current merit parameter. This condition is well posed since, by \cref{lemma:projected CR properties}, we expect $\dotprod{\gradf,\ww} \leq 0$. In particular, \cref{eqn:null space termination condition} is automatically satisfied in the exact projection case $\hPP=\PPNJ$.

The primary difficulty arises from the fact that $\hPP$ must be chosen \textit{prior to} the computation of the tangent component. We address this difficulty by choosing $\hPP$ to enforce \cref{eqn:null space termination condition} for the \textit{first} CR iterate, i.e.,
\begin{align*}
\vnorm{\JJ\hPP\gradf}
\le
\frac{\taunull}{\pi}
\dotprod{\gradf,\hPP\gradf}.
\tageq\label{eqn:null space gradient termination condition}
\end{align*}
Note that both $\ddone$ and $\rrzero$ are negative multiples of $\hPP\gradf$, so \cref{eqn:null space gradient termination condition} is sufficient to enforce \cref{eqn:null space termination condition} for the first possible $\FLAG=\SOL{}$ and $\FLAG=\LPC{}$ exits.
We maintain \cref{eqn:null space termination condition} across CR iterations with a simple backtracking safeguard. In particular, for each potential $\FLAG = \SOL{}$ iterate, $\ddtt$, if \cref{eqn:null space termination condition} fails to hold, we terminate early and revert to the previous iterate\footnote{\cref{lemma:projected CR properties} shows that even in the worst case, when \cref{eqn:null space termination condition} holds only for the first iterate, both $\FLAG=\SOL$ and $\FLAG=\LPC$ directions retain sufficient descent properties. See also \citep{Smee2025FirstishOM}.}, $\ddttm$. 
By \cref{eqn:null space gradient termination condition}, the first candidate satisfies \cref{eqn:null space termination condition}; thereafter, the safeguard only advances past iterates that satisfy this condition. Hence, if the current iterate fails the test, the previous iterate is valid. 

Once \cref{eqn:null space termination condition} is satisfied, the termination conditions are the usual SOL/LPC checks outlined in \cref{sec:CR algorithm background} with some minor modification. In particular, if \cref{eqn:tangent component termination condition} is satisfied then $\ddtt$ can be returned. On the other hand, if the LPC condition \cref{eqn:LPC condition} is triggered, we test whether \cref{eqn:null space termination condition} holds for $\rrtt$. If this test passes, then $\rrtt$ is a valid search direction. Otherwise, $\ddtt$ serves as a suitable search direction since the LPC condition has not been triggered through iteration $t-1$ (i.e., \cref{lemma:projected CR properties} is applicable) and $\ddtt$ passed \cref{eqn:null space termination condition}. We incorporate this termination scheme into \cref{alg:conjugate residual} via \Cref{term:indirect}. 
\begin{terminationblockalg}[ht]{Indirect Termination Block}[term:indirect]
    \Require{Current merit parameter $\pi$, LPC tolerance $\sigma>0$, tangent tolerance $\tautan>0$, and null space tolerance $\taunull \in (0,1)$.}
    \If{$\vnorm{\JJ\ddtt} > -\dfrac{\taunull}{\pi}\dotprod{\bgg,\ddtt}$} \Comment{Will not trigger at $t=1$, due to \cref{eqn:null space gradient termination condition}.}
        \State \Return{$\ww=\ddttm$, $\FLAG=\SOL$}
        \Comment{Current iterate violates \cref{eqn:null space termination condition}; return previous iterate.} \label{line:null-space-termination-test}
    \ElsIf{\cref{eqn:tangent component termination condition} is satisfied with $\tau=\tautan$}
        \State \Return{$\ww=\ddtt$, $\FLAG=\SOL$}
        \Comment{Solution identified.} \label{line:SOL termination}
    \ElsIf{\cref{eqn:LPC condition} is satisfied}
        \If{$\vnorm{\JJ\rrtt}
        >
        -\dfrac{\taunull}{\pi}\dotprod{\bgg,\rrtt}$}
            \State \Return{$\ww=\ddtt$, $\FLAG=\SOL$}
            \Comment{Residual candidate violates \cref{eqn:null space termination condition}.} \label{line:LPC detection termination residual not tangent}
        \Else
            \State \Return{$\ww=\rrtt$, $\FLAG=\LPC$} \label{line:LPC detection termination residual is tangent}
        \EndIf
    \Else
        \State \textbf{continue}
    \EndIf
\end{terminationblockalg}

\paragraph{Outer Termination Condition.}
Another consequence of using an inexact projection $\hPP$ is that the primal stationarity test 
$\vnorm{\PPNJ\gradf} \leq \epsp$ can no longer be evaluated directly. Indeed, since $\PPNJ$ is unavailable, the termination condition \cref{eqn:termination condition} is not explicitly computable. Fortunately, the construction of $\hPP$ guarantees that it provides an \emph{over-estimate} of the null space in the sense that $\Null(\JJ) \subseteq \Range(\hPP)$. This immediately implies the bound
\[
\vnorm{\PPNJ \gradf} \leq \vnorm{\hPP \gradf}.
\]
For this reason we adopt the computable proxy for the tangent termination condition as
\begin{align*}
    \vnorm{\hPP\gradf} \leq \epsp. \tageq\label{eqn:indirect primal termination condition}
\end{align*}
The overall outer termination condition (in place of \cref{eqn:termination condition}) for the indirect algorithm is therefore
\begin{align*}
    \vnorm{\cc} \leq \epsc, 
    \quad \text{and} \quad 
    \vnorm{\hPP\gradf} \leq \epsp.
    \tageq\label{eqn:indirect termination condition}
\end{align*}
This termination proxy has implications for the tangent-step computation, which we discuss in the following remark.

\begin{remark}
    \label{rem:tangent zero}
    If $\|\hPP\gradf(\xx)\|$ becomes small, i.e., the problem becomes close to primal stationary, the benefit of computing a tangent step diminishes. Indeed, by \cref{lemma:projected CR properties},
    \[
    \dotprod{\gradf(\xx),\ww}
    \in
    \bigO{-\vnorm{\hPP\gradf(\xx)}^2},
    \]
    so the descent generated by the tangent update scales proportionally to $\|\hPP\gradf(\xx)\|^2$. Therefore, whenever $\|\hPP\gradf(\xx)\| \leq \epsp$, we set $\ww=0$ and skip the tangent computation. If the constraints remain $\epsc$-infeasible, the iteration then proceeds using only the normal update.
\end{remark}

\paragraph{Dual Update.}

In the absence of a direct factorization of $\JJ$, the least-squares multipliers used in \cref{eqn:least-squares-multipliers} are no longer available in closed form for constructing the Lagrangian Hessian. Nevertheless, the flexibility of our framework allows the least-squares multipliers to be replaced by an approximation. For example, applying LSMR \citep{fongLSMRIterativeAlgorithm2011} to the dual least-squares problem, results in the subproblem
\begin{align*}
    \blambda^{(t)}
    =
    \argmin_{\blambda \in \sK(\JJ \JJ^\transpose, \JJ \gradf)}
    \vnorm{\JJ \JJ^\transpose \blambda + \JJ \gradf}.
    \tageq\label{eqn:LSMR least squares multipliers}
\end{align*}
The solution to this subproblem can be computed using only Jacobian-vector and Jacobian-transpose-vector products. Importantly, we do not require a stringent inexactness tolerance on this dual update. As discussed in \cref{sec:direct algorithm statement}, the multipliers only affect the curvature model used in the tangent computation and do not directly influence the descent properties of the algorithm. Indeed, the zero multipliers $\blambda_{k+1}=0$ remain a valid choice.

Recalling \cref{remark:Lagrangian Hessian boundedness}, a key requirement for the dual update is that the dual variables are bounded so that, in turn, the Lagrangian Hessian remains bounded across the iterations. Fortunately, the LSMR approximation to the least-squares dual update preserves this property. Indeed, since LSMR is equivalent to MINRES applied to the normal equations, we can utilize monotonicity properties of the MINRES iterates. In particular, by \cref{ass:jacobian regularity}, the matrix $\JJ\JJ^\transpose$ is positive definite, and hence \citep[Theorem 3.11]{liu_minres_2022} implies that the iterates produced by \cref{eqn:LSMR least squares multipliers} satisfy
\begin{align*}
    \vnorm{\blambda^{(t)}}
    \leq
    \vnorm{(\JJ\JJ^\transpose)^{-1}\JJ \gradf}
    =
    \vnorm{\JJ^\ddagger \gradf}
    =
    \vnorm{\blambda^\star}.
\end{align*}
Therefore, by \cref{remark:Lagrangian Hessian boundedness}, the LSMR dual iterates $\blambda^{(t)}$ and the corresponding Lagrangian Hessians remain uniformly bounded. We therefore take the bound in \cref{eqn:Lagrangian Hessian boundedness} to hold throughout this section.

\paragraph{Inexact Projection.}

We now turn to the construction of an approximate projector \cref{eqn:approximate projector formula} satisfying \cref{eqn:null space gradient termination condition}. One subtlety is that, as the range space basis $\hbOmega$ is expanded, the right-hand side of \cref{eqn:null space gradient termination condition} decreases. Indeed, by orthogonality of the projection,
\[
\dotprod{\gradf, \hPP \gradf} = \vnorm{\hPP \gradf}^2,
\]
which is clearly nonincreasing as $\hbOmega$ expands.
This could suggest that \cref{eqn:null space gradient termination condition} becomes increasingly stringent as the projector improves. However, this issue is avoided because the tangent step is disabled once $\|\hPP\gradf\|$ becomes small; see \cref{rem:tangent zero}. Indeed, the range-finding procedure only needs to continue while a nonzero tangent computation is required, in which case $\| \hPP \gradf \| \geq \epsp$. Therefore, the right-hand side of \cref{eqn:null space gradient termination condition} is bounded below, and the effective worst-case termination threshold is given by a uniform bound
\begin{align*}
    \vnorm{\JJ\hPP\gradf}
    \le
    \frac{\taunull}{\pi}\epsp^2.
\end{align*}
Our framework is flexible with respect to the specific range-finding procedure used to construct $\hbOmega$. Any method that produces a partial orthonormal basis for $\Range(\JJ^\transpose)$ and a corresponding projector that satisfies \cref{eqn:null space gradient termination condition} is sufficient. For instance, randomized range-finding methods \citep{martinssonRandomizedNumericalLinear2021,halkoFindingStructureRandomness2010} provide one possible approach, and may be attractive when block Jacobian-transpose-vector products can be applied efficiently. Thus, the appropriate range-finder may depend not only on the structure of the constraint Jacobian, but also on low-level computational considerations. Here, we focus on one particularly natural choice: a Krylov range-finder that targets the component of $\gradf$ lying in $\Range(\JJ^\transpose)$ using only Jacobian- and Jacobian-transpose-vector products.

\begin{example}[Krylov Range-Finder] \label{example:Krylov Range-Finder}

The Krylov range-finder constructs a (partial) basis for $\Range(\JJ^\transpose)$ from the Krylov vectors
\[
    \sK_t(\JJ^\transpose \JJ, \JJ^\transpose \JJ \gradf)
    \subseteq \Range(\JJ^\transpose).
\]
In particular, a Golub-Kahan bidiagonalization \citep{golub_matrix_2013} can be applied to form an orthonormal basis for the Krylov subspace, i.e.,
\[
    \hbOmegatt
    =
    \texttt{ortho}\bigl(\sK_t(\JJ^\transpose \JJ, \JJ^\transpose \JJ \gradf)\bigr),
\]
using only Jacobian-vector and Jacobian-transpose-vector products. The approximate projection is then computed as
\[
    \hPPtt = \eye - \hbOmegatt(\hbOmegatt)^\transpose.
\]
The key advantage of this construction is that the Krylov space is generated from $\JJ^\transpose\JJ\gradf$ and therefore directly targets the component of $\gradf$ contained in $\Range(\JJ^\transpose)$. This aligns the range-finding process with \cref{eqn:null space gradient termination condition}, rather than attempting to approximate the entire range space uniformly.

To make this precise, observe that the corresponding projection problem can be formulated as
\begin{align*}
    \hbOmegatt (\hbOmegatt)^\transpose \gradf
    =
    \zz^{(t)}
    &=
    \argmin_{\zz \in \sK_t(\JJ^\transpose \JJ, \JJ^\transpose \JJ \gradf)}
    \vnorm{\zz - \gradf} \\
    &=
    \argmin_{\zz \in \sK_t(\JJ^\transpose \JJ, \JJ^\transpose \JJ \gradf)}
    \vnorm{\zz - \PPRJT \gradf},
\end{align*}
with
\begin{align*}
    \hPPtt \gradf = \gradf - \zz^{(t)}.
\end{align*}

Any vector $\zz \in \sK_t(\JJ^\transpose \JJ, \JJ^\transpose \JJ \gradf)$ can be expressed as
\[
    \zz
    =
    q_{t-1}(\JJ^\transpose \JJ)\JJ^\transpose \JJ \PPRJT \gradf,
\]
where $q_{t-1}$ is a polynomial of degree at most $t-1$. Setting $p_t(\lambda) = 1 -  q_{t-1}(\lambda)\lambda$ gives
\begin{align*}
    \PPRJT \gradf - \zz
    =
    p_t(\JJ^\transpose\JJ) \PPRJT \gradf.
\end{align*}
Moreover, $p_t$ is a polynomial of degree at most $t$ satisfying $p_t(0)=1$. Hence
\begin{align*}
    \vnorm{\PPRJT\gradf-\zz^{(t)}}
    &=
    \min_{\zz \in \sK_t(\JJ^\transpose \JJ, \JJ^\transpose \JJ \gradf)}
    \vnorm{\zz - \PPRJT \gradf} \\
    &=
    \min_{\substack{p \in \sP_t\\ p(0)=1}}
    \vnorm{p(\JJ^\transpose\JJ)\PPRJT \gradf},
\end{align*}
where $\sP_t$ is the set of polynomials of degree at most $t$. This polynomial characterization can be used to bound $\|\JJ \hPPtt \gradf\|$ directly. In particular,
\begin{align*}
    \vnorm{\JJ \hPP^{(t)} \gradf}
    &=
    \vnorm{\JJ(\gradf -  \zz^{(t)})} \\
    &= \vnorm{\JJ(\PPRJT\gradf -  \zz^{(t)})} \\
    &\leq
    \vnorm{\JJ} \vnorm{\PPRJT\gradf -  \zz^{(t)}} \\
    &=
    \sigma_1
    \min_{\substack{p \in \sP_t\\ p(0)=1}}
    \vnorm{p(\JJ^\transpose\JJ) \PPRJT \gradf} \\
    &\leq
    \sigma_1
    \min_{\substack{p \in \sP_t\\ p(0)=1}}
    \max_{\lambda \in [\sigma_c^2, \sigma_1^2]}
    |p(\lambda)|\vnorm{\PPRJT \gradf},
\end{align*}
where on the final line we use the fact that the spectrum of $\JJ^\transpose \JJ$, restricted to $\Range(\JJ^\transpose)$, is contained in $[\sigma_c^2,\sigma_1^2]$. Applying a standard minimax polynomial bound, e.g., \citep[Theorem 3.1.1]{greenbaumIterativeMethodsSolving1997}, gives
\begin{align*}
    \vnorm{\JJ \hPPtt \gradf}
    \leq
    2\sigma_1
    \left(
        \frac{\sigma_1/\sigma_c - 1}
        {\sigma_1/\sigma_c + 1}
    \right)^{t}
    \vnorm{\PPRJT\gradf}.
\end{align*}
The rate indicated by this bound is geometric, so the number of Krylov iterations required to satisfy \cref{eqn:null space gradient termination condition} scales logarithmically with the desired projection accuracy. Thus, the Krylov range-finder need not identify the full range space $\Range(\JJ^\transpose)$; it can terminate as soon as the induced projection is sufficiently accurate. Moreover, in exact arithmetic, finite termination occurs once the Krylov space contains the active range space component $\PPRJT\gradf$.

Finally, this Krylov construction naturally accommodates warm starts. A warm-start basis $\tbOmega$, for example obtained from the inexact normal solve, can be incorporated by orthogonalizing the Krylov vectors away from $\Range(\tbOmega)$. Specifically, define
\[
    \tPP = \eye-\tbOmega\tbOmega^\transpose,
\]
and
\[
    \tJJ = \JJ\tPP.
\]
We then construct the Krylov basis as
\[
    \hbOmegatt
    =
    \texttt{ortho}\bigl(\sK_t( \tJJ^\transpose\tJJ, \tJJ^\transpose\tJJ\gradf)\bigr).
\]
The projection problem is given by
\[
    \zz^{(t)}
    =
    \argmin_{\zz \in \sK_t( \tJJ^\transpose\tJJ, \tJJ^\transpose\tJJ\gradf)}
    \vnorm{\zz - \gradf}.
\]
This construction ensures that $\zz^{(t)} \in \Range(\JJ^\transpose)$ and $\tbOmega^\transpose \hbOmegatt = 0$. The final approximate projector is then obtained by augmenting the Krylov basis with the warm-start basis:
\begin{align*}
    \hPP = \eye - \tbOmega\tbOmega^\transpose - \hbOmegatt(\hbOmegatt)^\transpose.
\end{align*}
\end{example}

\paragraph{The Algorithm.}

We summarize the developments of this section in \cref{alg:indirect primal-dual algorithm}.

\begin{algorithm}[ht]
\renewcommand{\baselinestretch}{1.25}\selectfont
\begin{algorithmic}[1]
\Require{Termination tolerances $(\epsc,\epsp)\in(0,1)^2$, normal and null space tolerances $\taunorm,\taunull\in(0,1)$, tangent tolerance $\tautan > 0$, curvature tolerance $\sigma>0$, initial point $(\xx_0,\blambda_0)$, initial merit parameter $\pi_{-1}>0$, merit update factor $\rho\in(0,1)$, line search parameter $\eta\in(0,1)$, and backtracking factor $\theta\in(0,1)$.}

\For{$k=0,1,2,\ldots$}

    \State $(\vv_k,\tbOmega_k)=$ [\cref{alg:inexact normal component}]($\JJ_k,\taunorm$).
    \Comment{Inexact normal component solve.}

    \State Compute $\pi_k$ according to \cref{eqn:indirect merit parameter trial,eqn:direct merit parameter update}.

    \State Apply range-finder (e.g., \cref{example:Krylov Range-Finder}) to identify $\hPP_k = \eye - \hbOmega_k\hbOmega_k^\transpose$ satisfying \cref{eqn:null space gradient termination condition} or \cref{eqn:indirect primal termination condition}.
    \If{\cref{eqn:indirect termination condition} is satisfied}
        \State \Return $\xx_k$
        \Comment{Terminate with an $(\epsc,\epsp)$-stationary point.}
    \EndIf

    \If{$\vnorm{\hPP_k\gradf_k}\le\epsp$} \Comment{See \cref{rem:tangent zero}}
        \State $\ww_k=0$
    \Else
        \State Update $\blambda_{k+1}$, e.g., using LSMR as in \cref{eqn:LSMR least squares multipliers}.
        \State Set $\bgg_k=\gradf_k$ and $\HH_k=\LHess(\xx_k,\blambda_{k+1})$.
        \State $(\ww_k, \FLAG_k)\gets$[\cref{alg:conjugate residual}]$(\HH_k,\bgg_k,\hPP_k,\pi_k,\sigma,\tautan,\taunull)$.
        \Comment{\cref{alg:conjugate residual} with \Cref{term:indirect}.}
    \EndIf

    \State $\pp_k=\vv_k+\ww_k$.
    \State Use backtracking line search to find $\alpha_k \leq 1$ satisfying \cref{eqn:line search descent criteria}
    \State $\xx_{k+1}=\xx_k+\alpha_k\pp_k$.

\EndFor

\caption{Indirect Primal-Dual Newton-MR Algorithm}
\label{alg:indirect primal-dual algorithm}
\end{algorithmic}
\end{algorithm}

\subsection{Convergence Analysis of Algorithm~\ref{alg:indirect primal-dual algorithm}}  \label{sec:indirect convergence analysis}
We now establish the global convergence properties of \cref{alg:indirect primal-dual algorithm}.

\begin{theorem} \label{thm:indirect case convergence}
    Suppose \cref{ass:basic assumptions,ass:jacobian regularity} hold and let $(\epsc,\epsp)\in(0,1)^2$. Then \cref{alg:indirect primal-dual algorithm} reaches a point satisfying \cref{eqn:termination condition} in at most 
    \[ K \defeq \left\lceil \frac{(f_0 - \flow)/\pi_{-1} + \vnorm{\cc_0}}{\min\{\Gamma_c', \Gamma_p'\} \min\{\epsc,\epsp^2\}} \right\rceil + 1, \]
    iterations, where $\Gamma_c',\Gamma_p'>0$ are constants defined in \cref{eq:indirect case Gamma}.
\end{theorem}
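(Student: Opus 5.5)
We will mirror the proof of \cref{thm:direct case convergence}, replacing each auxiliary lemma by an indirect analogue. The plan has four ingredients: normal-step properties, tangent-step descent, merit-function descent, and boundedness of the merit parameter and step size. These feed into the same telescoping argument. Note also that once \cref{eqn:indirect termination condition} holds, \cref{eqn:termination condition} holds too, since $\vnorm{\PPNJ\gradf}\le\vnorm{\hPP\gradf}$.

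\emph{Normal component.} For the LSMR output $\vv_k$, \cref{eqn:inexact normal component termination condition} gives $\vnorm{\JJ_k\vv_k+\cc_k}-\vnorm{\cc_k}\le-\gammanorm\vnorm{\cc_k}$. For the norm bound, note that LSMR is CR/MINRES applied to the positive definite (on $\Range(\JJ^\transpose)$) system $\JJ^\transpose\JJ\vv=-\JJ^\transpose\cc$. The monotonicity result \citep[Theorem 3.11]{liu_minres_2022} then gives $\vnorm{\vv_k}\le\vnorm{\JJ_k^\dagger\cc_k}\le\vnorm{\cc_k}/\sigma_\JJ$. With \cref{eqn:indirect merit parameter trial} this yields $\pitrial_k\le\kappa_g/((1-\rho)\gammanorm\sigma_\JJ)$, so $\pi_{-1}\le\pi_k\le\pib'$ uniformly.

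\emph{Tangent component.} If $\ww_k\neq0$, \cref{term:indirect} returns either a SOL iterate $\ddss$ with the LPC test passed through iteration $s-1$, or an LPC residual. In both cases \cref{lemma:projected CR properties} applies with $\ZZ=\bSSh_k$, giving $\dotprod{\gradf_k,\ww_k}\le-\gammatan\vnorm{\ww_k}^2$ and $\vnorm{\ww_k}\ge\omegatan\vnorm{\hPP_k\gradf_k}$. One case needs care: when the SOL branch reverts to $\ddttm$, we use the step-length monotonicity bound from \cref{eqn:SOL step length} for $s=t-1\ge1$. This is valid because the first iterate always passes, by \cref{eqn:null space gradient termination condition}.

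In addition, every returned direction satisfies \cref{eqn:null space termination condition} with $\pi=\pi_k$, by the construction of \cref{term:indirect}. The first iterate passes by \cref{eqn:null space gradient termination condition}; the safeguard only advances past iterates that pass; and the LPC residual is returned only after it is tested. This bookkeeping is the main obstacle. We must check each return line of \cref{term:indirect}, including the $t=0$ early LPC exit of \cref{alg:conjugate residual}, where $\rrzero=-\hPP\gradf$ is covered by \cref{eqn:null space gradient termination condition}. We must also confirm that the merit parameter used inside CR is the same $\pi_k$ used in the line search, which holds since $\pi_k$ is computed before the tangent step.

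\emph{Merit descent and step size.} Plug these bounds into \cref{eqn:indirect case direction derivative Jw bound}. The term $\pi_k\vnorm{\JJ_k\ww_k}\le-\taunull\dotprod{\gradf_k,\ww_k}$ is absorbed into the tangent descent, and the merit update gives $\dotprod{\gradf_k,\vv_k}\le(1-\rho)\gammanorm\pi_k\vnorm{\cc_k}$. Together these yield
\[
\Db(\xx_k,\pp_k;\pi_k)\le-(1-\taunull)\gammatan\vnorm{\ww_k}^2-\rho\gammanorm\pi_k\vnorm{\cc_k}.
\]
If $\ww_k=0$ the same bound holds trivially. Next, $\vnorm{\pp_k}^2\le2(\vnorm{\ww_k}^2+\vnorm{\vv_k}^2)$, since $\ww_k,\vv_k$ need no longer be orthogonal. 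With $\vnorm{\vv_k}^2\le\kappa_c\vnorm{\cc_k}/\sigma_\JJ^2$, the argument of \cref{lemma:direct line search termination} gives $\alpha_k\ge\alphab'$, with $\gammatan$ replaced by $(1-\taunull)\gammatan$, $\rho$ by $\rho\gammanorm$, $\pib$ by $\pib'$, and an extra factor $2$.

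\emph{Conclusion.} Set $\Gamma_c'=\alphab'\eta\rho\gammanorm$ and $\Gamma_p'=\alphab'\eta(1-\taunull)\gammatan\omegatan^2/\pib'$. Partition the non-terminated iterations by whether $\vnorm{\cc_k}>\epsc$. Otherwise termination failure forces $\vnorm{\hPP_k\gradf_k}>\epsp$, so the tangent step is computed and $\vnorm{\ww_k}\ge\omegatan\epsp$. Repeating the telescoping sum of $(\phi(\xx_{k+1};\pi_k)-\phi(\xx_k;\pi_k))/\pi_k$ verbatim from the proof of \cref{thm:direct case convergence} then yields the claimed bound on $K$.
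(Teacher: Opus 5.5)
Your proposal is correct and follows the paper's proof essentially step for step. It uses the same normal-step and merit-parameter bounds, and the same case analysis of \Cref{term:indirect}, including the early $t=0$ LPC exit and the reversion to $\ddttm$ with $t\ge 2$. It also reaches the same descent bound \cref{eqn:indirect case directional derivative upper bound}, the same step-size constant with the factor-$2$ loss from non-orthogonality of $\vv_k$ and $\ww_k$, and identical $\Gamma_c',\Gamma_p'$. The only cosmetic difference is the partition: you split iterations by $\vnorm{\cc_k}>\epsc$, as in the direct proof, whereas the paper splits by $\vnorm{\hPP_k\gradf_k}>\epsp$; both work because the merit-descent bound holds whether or not the tangent step is skipped.
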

\cref{thm:indirect case convergence} guarantees that \cref{alg:indirect primal-dual algorithm} identifies a point $\xx_k$ satisfying \cref{eqn:termination condition}. If dual variables are required to form a pair satisfying \cref{eqn:epsilon KKT stationary point}, they can be obtained by approximately solving
\begin{align*}
    \min_{\blambda} \vnorm{\JJ^\transpose \blambda + \gradf},
\end{align*}
by \cref{lemma:termination condition equivalence}. Note that this problem need only be solved inexactly to a tolerance $\vnorm{\JJ^\transpose_k \blambda_k + \gradf_k} \leq \epsp$. Additionally, we remark that the overall dependence on $\epsp$ and $\epsc$ obtained in \cref{thm:indirect case convergence} matches that of \cref{thm:direct case convergence}.

We begin the analysis with the descent properties of the inexactly computed normal component.

\begin{lemma}[Normal Component Properties] \label{lemma:inexact normal component properties}
Under \cref{ass:jacobian regularity,ass:basic assumptions}, the output $\vv_k$ of \cref{alg:inexact normal component} within \cref{alg:indirect primal-dual algorithm} satisfies
\begin{align*}
    \vnorm{\JJ_k \vv_k + \cc_k}
    &\leq
    (1-\gammanorm) \vnorm{\cc_k},
    \tageq\label{eqn:inexact normal component descent}\\
    \nu_l \vnorm{\cc_k}
    &\leq
    \vnorm{\vv_k}
    \leq
    \nu_u \vnorm{\cc_k}.
    \tageq\label{eqn:inexact normal component norm bounds}
\end{align*}
where
\[
\gammanorm = 1-\taunorm,
\qquad
\nu_u \defeq \frac{1}{\sigma_{\JJ}},
\qquad
\nu_l \defeq \frac{\sigma_{\JJ}}{\kappa_{\JJ}^2}.
\]
\end{lemma}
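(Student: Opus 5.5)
The first inequality \cref{eqn:inexact normal component descent} is immediate from the termination rule of \cref{alg:inexact normal component}. The algorithm returns $\vv_k = \vvtt$ only once \cref{eqn:inexact normal component termination condition} holds, that is, $\vnorm{\JJ_k\vv_k+\cc_k}\le \taunorm\vnorm{\cc_k} = (1-\gammanorm)\vnorm{\cc_k}$. The rule is reached in finitely many steps because, under \cref{ass:jacobian regularity}, $\JJ_k\JJ_k^\dagger = \eye$. Hence the least-squares problem \cref{eqn:normal component problem} has a zero-residual solution $\vv^\star = -\JJ_k^\dagger\cc_k$, and LSMR (CR on the normal equations) reaches it in at most $c$ steps in exact arithmetic. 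The case $\cc_k = 0$ is trivial, since then $\JJ_k^\transpose\cc_k=0$ and $\vv_k=0$; so assume $\cc_k\neq 0$ below.

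For the upper bound in \cref{eqn:inexact normal component norm bounds}, I will use the monotonicity of MINRES/CR iterates, as in the dual-update discussion. LSMR is MINRES applied to the positive semidefinite system $\JJ_k^\transpose\JJ_k\vv = -\JJ_k^\transpose\cc_k$ with right-hand side in $\Range(\JJ_k^\transpose)$. No negative curvature arises here, so \citep[Theorem 3.11]{liu_minres_2022} gives $\vnorm{\vvtt}\le \vnorm{(\JJ_k^\transpose\JJ_k)^\dagger\JJ_k^\transpose\cc_k} = \vnorm{\JJ_k^\dagger\cc_k}$. That is, the iterate norms increase monotonically up to the norm of the minimum-norm solution. \cref{lemma:direct normal component descent and norm properties} then gives $\vnorm{\JJ_k^\dagger\cc_k}\le\vnorm{\cc_k}/\sigma_\JJ$, which yields $\nu_u = 1/\sigma_\JJ$.

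For the lower bound, I will use an explicit Krylov characterization rather than the residual condition. The residual route gives $\vnorm{\JJ_k\vv_k}\ge(1-\taunorm)\vnorm{\cc_k}$, hence $\vnorm{\vv_k}\ge \gammanorm\vnorm{\cc_k}/\kappa_\JJ$, but that constant carries $\gammanorm$, whereas the stated $\nu_l=\sigma_\JJ/\kappa_\JJ^2$ does not. The stated constant matches the first CR iterate. As in the proof of \cref{eqn:SOL step length} in \cref{lemma:projected CR properties}, with $\AA=\JJ_k^\transpose\JJ_k$ and $\bb=\JJ_k^\transpose\cc_k$, the first iterate is $\vv^{(1)} = -\frac{\dotprod{\bb,\AA\bb}}{\vnorm{\AA\bb}^2}\bb$. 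Since $\bb\in\Range(\JJ_k^\transpose)$ and $\AA$ restricted there has spectrum in $[\sigma_\JJ^2,\kappa_\JJ^2]$, we get $\dotprod{\bb,\AA\bb}/\vnorm{\AA\bb}^2\ge 1/\kappa_\JJ^2$. Combined with $\vnorm{\bb}\ge\sigma_\JJ\vnorm{\cc_k}$, this gives $\vnorm{\vv^{(1)}}\ge \sigma_\JJ\vnorm{\cc_k}/\kappa_\JJ^2$. Monotonicity, $\vnorm{\vvtt}\ge\vnorm{\vv^{(1)}}$ for all $t\ge1$ (again \citep[Theorem 3.11]{liu_minres_2022}, valid because the restricted operator is positive definite so LPC never triggers), then extends the bound to the returned iterate.

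The main obstacle is justifying the MINRES results for LSMR cleanly. I need the identification of LSMR iterates with MINRES/CR on the normal equations, which is the subproblem \cref{eqn:indirect normal component computation}. I also need to handle the semidefiniteness of $\JJ_k^\transpose\JJ_k$ when $c<d$: the argument should pass to the reduced operator on $\Range(\JJ_k^\transpose)$, where it is $\sigma_\JJ^2$-strongly positive definite and contains the whole Krylov subspace, so the monotonicity and first-iterate bounds apply verbatim.
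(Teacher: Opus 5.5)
Your proposal is correct and follows the paper's proof essentially step by step: the residual bound comes from the LSMR stopping rule, the upper bound from CR/MINRES iterate monotonicity capped at $\vnorm{\JJ_k^\dagger\cc_k}$, and the lower bound from the explicit first CR iterate combined with monotonicity. The only cosmetic difference is how zero-curvature breakdown is excluded: you restrict $\JJ_k^\transpose\JJ_k$ to $\Range(\JJ_k^\transpose)$, while the paper argues by contradiction that zero curvature would force $\JJ_k\vvtt+\cc_k=0$; both rest on the invertibility of $\JJ_k\JJ_k^\transpose$.
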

\begin{proof}
To simplify notation, we drop the outer iteration index and work with the inner iterates of \cref{alg:inexact normal component}. The termination condition \cref{eqn:inexact normal component termination condition} gives the first claim.
We now prove the norm bounds. Since $\JJ^\transpose \JJ$ is positive semidefinite and the LSMR subproblem in \cref{eqn:indirect normal component computation} is equivalent to CR applied to the normal equations, we appeal to the monotonicity of the CR iterates \citep{liu_minres_2022}, provided no zero-curvature direction occurs in $\sK_t(\JJ^\transpose\JJ,\JJ^\transpose\cc)$, i.e.,
\[
\dotprod{\rrii_\normal,\JJ^\transpose\JJ\rrii_\normal}>0, \quad \text{where} \quad \rrii_\normal = \JJ^\transpose\JJ\vvii + \JJ^\transpose\cc.
\]

If $\cc=0$, then $\vv^{(0)}=0$ and the result is trivial. Otherwise $\cc\neq 0$ and the method performs at least one iteration. Suppose a zero-curvature direction occurs at some $t>0$ prior to \cref{eqn:inexact normal component termination condition}. Then
\begin{align*}
0
&=
\dotprod{\rrtt_\normal,\JJ^\transpose\JJ\rrtt_\normal}
=
\vnorm{\JJ\rrtt_\normal}^2
=
\vnorm{\JJ\JJ^\transpose(\JJ\vvtt+\cc)}^2,
\end{align*}
which implies $\JJ^\transpose(\JJ\vvtt+\cc)=0$, hence $\rrtt_\normal=0$ and $\vvtt=-\JJ^\dagger\cc$. In this case, the termination condition \cref{eqn:inexact normal component termination condition} is necessarily satisfied, which is a contradiction. Hence, zero curvature cannot occur before termination.

Let $g_\normal$ denote the grade of $\JJ^\transpose\cc$ with respect to $\JJ^\transpose\JJ$. Since $\Range(\JJ^\transpose)=\Range(\JJ^\transpose\JJ)$, \citep[Fact 1]{limConjugateDirectionMethods2024} and \citep[Theorem 3.11]{liu_minres_2022} yield
\begin{align*}
    \vnorm{\vv_1} \leq \cdots \leq \vnorm{\vv_t} \leq \cdots \leq \vnorm{\vv^{(g_\normal)}} = \vnorm{\JJ^\dagger \cc}.
\end{align*}

The upper bound in \cref{eqn:inexact normal component norm bounds} follows as in \cref{lemma:direct normal component descent and norm properties}. For the lower bound, explicitly computing the first CR step gives
\begin{align*}
\vnorm{\vv_1}
&=
\frac{\dotprod{\JJ^\transpose\cc,\JJ^\transpose\JJ\JJ^\transpose\cc}}
     {\vnorm{\JJ^\transpose\JJ\JJ^\transpose\cc}^2}
\,\vnorm{\JJ^\transpose\cc} \\
&=
\frac{\dotprod{\JJ\JJ^\transpose\cc,\JJ\JJ^\transpose\cc}}
     {\dotprod{\JJ\JJ^\transpose\cc,\JJ\JJ^\transpose\JJ\JJ^\transpose\cc}}
\,\vnorm{\JJ^\transpose\cc}
\;\geq\;
\frac{\sigma_{\JJ}}{\kappa_{\JJ}^2}\vnorm{\cc},
\end{align*}
where we used \cref{ass:jacobian regularity} in the final inequality.
\end{proof}

Using \cref{lemma:inexact normal component properties}, we can establish boundedness of the merit parameter in a manner similar to \cref{lemma:direct merit parameter boundedness}.
\begin{lemma}[Merit Parameter Boundedness] \label{lemma:indirect case merit parameter boundedness}
    Under \cref{ass:jacobian regularity,ass:basic assumptions}, the penalty parameter selected in \cref{alg:indirect primal-dual algorithm} is uniformly bounded, i.e., there exists $0 < \pit < \infty$ such that $$\pi_{-1} \leq \pi_k \leq \pit, \quad  \forall k.$$
\end{lemma}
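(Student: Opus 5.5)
The proof mirrors \cref{lemma:direct merit parameter boundedness}: bound the trial value \cref{eqn:indirect merit parameter trial} uniformly in $k$, then use that the update \cref{eqn:direct merit parameter update} takes a running maximum.

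\textbf{Lower bound.} Since \cref{eqn:direct merit parameter update} sets $\pi_k = \max\{\pitrial_k,\pi_{k-1}\}$, the sequence $\{\pi_k\}$ is nondecreasing. Hence $\pi_k \geq \pi_{-1}$ for every $k$.

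\textbf{Upper bound on the trial value.} If $\vnorm{\cc_k}=0$, then $\pitrial_k=0$ and there is nothing to bound. Otherwise, Cauchy--Schwarz, \cref{ass:item:gradient is bounded}, and the upper bound in \cref{eqn:inexact normal component norm bounds} from \cref{lemma:inexact normal component properties} give
\[
\pitrial_k \leq \frac{\vnorm{\gradf_k}\vnorm{\vv_k}}{(1-\rho)\gammanorm\vnorm{\cc_k}} \leq \frac{\kappa_g \nu_u}{(1-\rho)\gammanorm} = \frac{\kappa_g}{(1-\rho)(1-\taunorm)\sigma_{\JJ}}.
\]
This bound is finite because $\taunorm<1$ implies $\gammanorm>0$.

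\textbf{Conclusion.} Define
\[
\pit \defeq \max\left\{\frac{\kappa_g}{(1-\rho)(1-\taunorm)\sigma_{\JJ}},\ \pi_{-1}\right\}.
\]
An induction on $k$ using the max-update then shows $\pi_k \leq \pit$ for all $k$.

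The only step that needs care is the upper bound $\vnorm{\vv_k}\leq \vnorm{\cc_k}/\sigma_{\JJ}$ for the inexact LSMR iterate. The exact pseudoinverse formula used in the direct case is no longer available, so this bound instead rests on the monotonicity of the CR/LSMR iterates toward $\JJ^\dagger\cc$. That monotonicity was already established in \cref{lemma:inexact normal component properties}, so here it can simply be invoked.
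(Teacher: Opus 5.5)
Your proposal is correct and matches the paper's proof. The paper likewise bounds the trial value using Cauchy--Schwarz, $\vnorm{\gradf_k}\le\kappa_g$, and $\vnorm{\vv_k}\le\nu_u\vnorm{\cc_k}$ from the inexact normal component lemma, sets $\pit$ to the maximum of this bound and $\pi_{-1}$, and concludes from the monotone max-update. Your explicit handling of the lower bound and of the case $\vnorm{\cc_k}=0$ just fills in details the paper leaves implicit.
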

\begin{proof}
    Applying \cref{lemma:inexact normal component properties,ass:basic assumptions} to \cref{eqn:indirect merit parameter trial} yields
    \begin{align*}
        \frac{\dotprod{\gradf_k,\vv_k}}
        {(1-\rho)\gammanorm \vnorm{\cc_k}}
        \leq
        \frac{\vnorm{\gradf_k}\vnorm{\vv_k}}
        {(1-\rho)\gammanorm \vnorm{\cc_k}}
        \leq
        \frac{\kappa_g \nu_u}
        {(1-\rho)\gammanorm}.
    \end{align*}
    Define
    \begin{align*}
        \pit \defeq
        \max\left\{
        \frac{\kappa_g \nu_u}
        {(1-\rho)\gammanorm},
        \pi_{-1}
        \right\}.
    \end{align*}
    The merit parameter update \cref{eqn:direct merit parameter update} then gives the desired bounds.
\end{proof}

Next we consider the descent and null space properties of the tangent step.

\begin{lemma}[Descent Properties of Tangent Component] \label{lemma:indirect case descent of tangent component}
    Suppose \cref{ass:basic assumptions} holds and
    \[
        \vnorm{\hPP_k \gradf_k} > \epsp,
    \]
    in \cref{alg:indirect primal-dual algorithm}. The call to \cref{alg:conjugate residual} with \Cref{term:indirect} returns a tangent search direction $\ww_k$ satisfying
    \begin{align*}
        \dotprod{\gradf_k, \ww_k} &\leq - \gammatan \vnorm{\ww_k}^2, \tageq\label{eqn:tangent component descent}\\
        \vnorm{\ww_k} &\geq \omegatan \vnorm{\hPP_k \gradf_k }, \tageq\label{eqn:tangent component norm bound} \\
        \vnorm{\JJ_k \ww_k} &\leq -\frac{\taunull}{\pi_k}\dotprod{\gradf_k, \ww_k}. \tageq\label{eqn:tangent component null space bound}
    \end{align*}
    The constants, $\gammatan$ and $\omegatan$, are as in \cref{lemma:direct case descent of tangent component}.
\end{lemma}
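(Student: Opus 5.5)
The plan is a case analysis over the four exit lines of \Cref{term:indirect}, together with the early exit at \cref{line:early LPC termination}. In every case I apply \cref{lemma:projected CR properties} with $\HH = \HH_k$, $\bgg = \gradf_k$, $\PP = \hPP_k = \bSSh_k\bSSh_k^\transpose$ and $\kappa = \kappa_{\sL}$. This choice of $\kappa$ is valid because $\vnorm{\HHb} = \vnorm{\bSSh_k^\transpose \HH_k \bSSh_k} \leq \kappa_{\sL}$ by \cref{eqn:Lagrangian Hessian boundedness}, which applies since the LSMR multipliers are bounded. We also have $\bbgg = \bSSh_k^\transpose\gradf_k$, so $\vnorm{\bbgg} = \vnorm{\hPP_k\gradf_k}$ by orthonormality of the columns of $\bSSh_k$. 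Once the lemma applies, \cref{eqn:tangent component descent,eqn:tangent component norm bound} follow exactly as in \cref{lemma:direct case descent of tangent component}. The SOL-type outputs use \cref{eqn:SOL second order descent,eqn:SOL step length}, giving constants $\sigma$ and $\sigma/\kappa_{\sL}^2$. The LPC-type outputs use \cref{eqn:LPC step descent,eqn:LPC step length}, giving constants $1$ and $\tautan/\sqrt{\tautan^2+\kappa_{\sL}^2}$. Taking the minimum of each pair yields $\gammatan$ and $\omegatan$.

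The cases run as follows.
\begin{enumerate}
\item \textbf{Early LPC exit at $t=0$.} The output is $\rrzero = -\hPP_k\gradf_k$. Descent and the norm bound hold with constant $1$ by the $t=0$ part of the lemma. For \cref{eqn:tangent component null space bound}, note that $\dotprod{\gradf_k,\rrzero} = -\vnorm{\hPP_k\gradf_k}^2$, so the required inequality is exactly \cref{eqn:null space gradient termination condition}. The range-finder enforces that condition here, because $\vnorm{\hPP_k\gradf_k} > \epsp$ rules out the alternative exit \cref{eqn:indirect primal termination condition}.
\item \textbf{\cref{line:SOL termination}.} The output is $\ww = \ddtt$. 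LPC has not been triggered for $i = 0,\ldots,t-1$, so the SOL part of the lemma applies with $s = t$. The null-space bound holds because the test on \cref{line:null-space-termination-test} was passed.
\item \textbf{\cref{line:null-space-termination-test}.} The output is $\ww = \ddttm$, and $t \geq 2$ because the test cannot fail at $t=1$. The SOL bounds hold with $s = t-1$. The null-space bound holds because $\ddttm$ passed the same test at the previous iteration, or, when $t-1 = 1$, because $\dd^{(1)}$ is a positive multiple of $-\hPP_k\gradf_k$ and so inherits \cref{eqn:null space gradient termination condition}.
\item \textbf{\cref{line:LPC detection termination residual not tangent}.} The output is $\ddtt$. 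LPC was first detected at iteration $t$, so no LPC occurred through $t-1$ and the SOL bounds hold with $s = t$. The null-space bound holds because $\ddtt$ passed the first test in the block.
\item \textbf{\cref{line:LPC detection termination residual is tangent}.} The output is $\rrtt$, returned when LPC is detected before \cref{eqn:tangent component termination condition} is satisfied. The LPC part of the lemma applies, and \cref{eqn:tangent component null space bound} is precisely the test that was just passed.
\end{enumerate}

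The main obstacle is the bookkeeping for which iterates the hypothesis ``LPC not triggered through iteration $t$'' of \cref{lemma:projected CR properties} covers. The lemma gives SOL bounds for $s = 1,\ldots,t+1$ whenever $\brrii$ passes the curvature test for $i = 0,\ldots,t$. Reaching iteration $t$ of the loop means LPC was not triggered for $i = 0,\ldots,t-1$, which covers $\ddtt$. Each case above checks this index. Two smaller points also need care. The loop must terminate before the grade, since at the grade $\HHb\brrtt = 0$ and \cref{eqn:tangent component termination condition} then holds. The sign conditions must also be consistent: in every case \cref{eqn:tangent component descent} gives $\dotprod{\gradf_k,\ww_k} \le 0$, so the right-hand side of \cref{eqn:tangent component null space bound} is nonnegative.
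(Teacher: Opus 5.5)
Your proposal is correct and follows essentially the same route as the paper's proof. Like the paper, you run a case analysis over the exits of \Cref{term:indirect}, plus the early exit at \cref{line:early LPC termination}. You apply \cref{lemma:projected CR properties} at the right iteration index for the descent and norm bounds. You obtain \cref{eqn:tangent component null space bound} either from \cref{eqn:null space gradient termination condition} or from the null-space test that was passed.

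You also make explicit three points the paper leaves implicit, all correctly: the range-finder must have enforced \cref{eqn:null space gradient termination condition} because $\vnorm{\hPP_k\gradf_k} > \epsp$ excludes the alternative, $\kappa_{\sL}$ bounds $\vnorm{\HHb}$, and the loop terminates by the grade.
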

\begin{proof}
    First consider the case $\FLAG=\SOL$. If \Cref{term:indirect} returns on
    \cref{line:LPC detection termination residual not tangent,line:SOL termination},
    then \cref{eqn:tangent component null space bound} holds by construction.
    Moreover, since LPC has not been detected up to iteration $t-1$,
    \cref{lemma:projected CR properties} together with
    \cref{eqn:Lagrangian Hessian boundedness} yields
    \begin{align*}
        \dotprod{\gradf_k,\ww_k}
        &\leq -\sigma \vnorm{\ww_k}^2,
        \\
        \vnorm{\ww_k}
        &\geq
        \frac{\sigma}{\kappa_{\sL}^2}
        \vnorm{\bSSh_k^\transpose \gradf_k}
        =
        \frac{\sigma}{\kappa_{\sL}^2}
        \vnorm{\hPP_k\gradf_k},
    \end{align*}
    where the equality follows from the orthogonality of $\bSSh_k$.
    Alternatively, if termination occurs on
    \cref{line:null-space-termination-test}, then
    \cref{eqn:null space gradient termination condition} guarantees that
    $t\ge 2$. Since the null-space test \cref{eqn:null space termination condition} was not triggered at iteration
    $t-1$, the returned direction
    $\ww_k=\ddttm_k$ satisfies
    \cref{eqn:tangent component null space bound}. The descent and norm
    bounds above continue to hold because LPC was not detected up to
    iteration $t-2$.

    Next consider the case $\FLAG=\LPC$.
    Since either \cref{eqn:tangent component termination condition} did not
    trigger or $t=0$, \cref{lemma:projected CR properties} implies
    \begin{align*}
        \dotprod{\gradf_k,\ww_k}
        &= -\vnorm{\ww_k}^2,
        \\
        \vnorm{\ww_k}
        &\geq
        \frac{\tautan}
        {\sqrt{\tautan^2+\kappa_{\sL}^2}}
        \vnorm{\bSSh_k^\transpose \gradf_k}
        =
        \frac{\tautan}
        {\sqrt{\tautan^2+\kappa_{\sL}^2}}
        \vnorm{\hPP_k\gradf_k}.
    \end{align*}
    If termination occurs on \cref{line:LPC detection termination residual is tangent}, then \cref{eqn:tangent component null space bound} holds by construction. The only other case to consider is if the LPC condition is triggered at $t=0$ (i.e., on \cref{line:early LPC termination} of \cref{alg:conjugate residual}). However, in this case \cref{eqn:tangent component null space bound} follows directly from \cref{eqn:null space gradient termination condition}.

    Collecting the above bounds establishes
    \cref{eqn:tangent component descent,eqn:tangent component norm bound,eqn:tangent component null space bound}.
\end{proof}

Next, we show that the tolerances enforced on the normal and tangent components, together with the choice of merit parameter, guarantee descent of the merit function. Subsequently, we show that the line search criterion \cref{eqn:line search descent criteria} is satisfied for a small enough step size.

\begin{lemma}[Descent on Merit Function]\label{lemma:indirect primal update descent on merit function}
    Under \cref{ass:jacobian regularity,ass:basic assumptions}, the primal search direction $\pp_k$ and merit parameter $\pi_k$ computed in \cref{alg:indirect primal-dual algorithm} satisfy 
    \begin{align*}
        \Db(\xx_k, \pp_k; \pi_k) \leq -(1- \taunull)\gammatan\vnorm{\ww_k}^2 - \gammanorm \rho \pi_k \vnorm{\cc_k}, \tageq\label{eqn:indirect case directional derivative upper bound}
    \end{align*}
    where $\Db$ is defined in \cref{lemma:descent lemma}.
\end{lemma}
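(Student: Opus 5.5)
We start from the definition of $\Db$ in \cref{lemma:descent lemma} together with the triangle-inequality splitting \cref{eqn:indirect case direction derivative Jw bound}. With $\pp_k = \vv_k + \ww_k$ this gives
\[
\Db(\xx_k,\pp_k;\pi_k) \leq \dotprod{\gradf_k,\ww_k} + \pi_k\vnorm{\JJ_k\ww_k} + \dotprod{\gradf_k,\vv_k} + \pi_k\bigl(\vnorm{\JJ_k\vv_k+\cc_k} - \vnorm{\cc_k}\bigr).
\]
We bound the tangent group (the first two terms) and the normal group (the last two terms) separately.

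\textbf{Tangent group.} There are two cases, according to the branch taken in \cref{alg:indirect primal-dual algorithm}.
\begin{itemize}
\item If $\vnorm{\hPP_k\gradf_k}\le\epsp$, then $\ww_k=0$ and both tangent terms vanish. The claimed bound $-(1-\taunull)\gammatan\vnorm{\ww_k}^2$ is then also zero, so this case is trivial.
\item Otherwise, \cref{lemma:indirect case descent of tangent component} applies. By \cref{eqn:tangent component null space bound}, $\pi_k\vnorm{\JJ_k\ww_k}\le -\taunull\dotprod{\gradf_k,\ww_k}$. Hence the tangent group is at most $(1-\taunull)\dotprod{\gradf_k,\ww_k}$. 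Since $1-\taunull>0$, \cref{eqn:tangent component descent} bounds this by $-(1-\taunull)\gammatan\vnorm{\ww_k}^2$.
\end{itemize}

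One point needs care here. The merit parameter $\pi$ used inside \Cref{term:indirect} and in \cref{eqn:null space gradient termination condition} must be the same $\pi_k$ that appears in $\Db$. This holds because \cref{alg:indirect primal-dual algorithm} updates $\pi_k$ right after the normal solve, before the range-finder and the CR call.

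\textbf{Normal group.} If $\vnorm{\cc_k}=0$, then LSMR returns $\vv_k=0$ and the normal group is zero. Otherwise, \cref{eqn:inexact normal component descent} gives
\[
\pi_k\bigl(\vnorm{\JJ_k\vv_k+\cc_k}-\vnorm{\cc_k}\bigr)\le -\gammanorm\pi_k\vnorm{\cc_k}.
\]
Next, the update \cref{eqn:direct merit parameter update} gives $\pi_k\ge\pitrial_k$. Rearranging the trial value \cref{eqn:indirect merit parameter trial} then yields $\dotprod{\gradf_k,\vv_k}\le(1-\rho)\gammanorm\pi_k\vnorm{\cc_k}$. Adding the two bounds, the normal group is at most
\[
(1-\rho)\gammanorm\pi_k\vnorm{\cc_k} - \gammanorm\pi_k\vnorm{\cc_k} = -\rho\gammanorm\pi_k\vnorm{\cc_k}.
\]

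Summing the tangent and normal bounds gives \cref{eqn:indirect case directional derivative upper bound}.

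The main obstacle is the bookkeeping in the tangent group: confirming that \cref{eqn:tangent component null space bound} holds with exactly the $\pi_k$ used in the merit function, and that every exit of \Cref{term:indirect} is covered. Both are handled by \cref{lemma:indirect case descent of tangent component} and the ordering of steps in the algorithm.
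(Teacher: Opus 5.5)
Your proof is correct and follows essentially the same argument as the paper. Both use the same triangle-inequality splitting, use \cref{eqn:tangent component null space bound} to absorb $\pi_k\vnorm{\JJ_k\ww_k}$ into $(1-\taunull)\dotprod{\gradf_k,\ww_k}$, combine the merit parameter rule with \cref{eqn:inexact normal component descent} for the normal terms, and treat $\ww_k=0$ separately. The only differences are that you organize the argument by tangent and normal term groups rather than by the algorithm's branch, and that you explicitly handle $\vnorm{\cc_k}=0$, which the paper's proof leaves implicit.
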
 
\begin{proof}
Recall from \cref{lemma:descent lemma} that
\begin{align*}
    \Db(\xx_k,\pp_k;\pi_k)
    &= \dotprod{\gradf_k,\pp_k}
    + \pi_k\big(\vnorm{\JJ_k\pp_k+\cc_k}-\vnorm{\cc_k}\big) \\
    &\leq
    \dotprod{\gradf_k,\vv_k}
    + \dotprod{\gradf_k,\ww_k}
    + \pi_k \vnorm{\JJ_k\ww_k}
    + \pi_k\big(\vnorm{\JJ_k\vv_k+\cc_k}-\vnorm{\cc_k}\big).
\end{align*}

\paragraph{Case 1: $\vnorm{\hPP_k\gradf_k} > \epsp$.}
In this case, \cref{alg:indirect primal-dual algorithm} produces both $\vv_k$ and $\ww_k$ satisfying
\cref{eqn:inexact normal component descent,eqn:tangent component descent,eqn:tangent component null space bound}.
Moreover, the merit parameter update (\cref{eqn:indirect merit parameter trial,eqn:direct merit parameter update}) implies
\begin{align}
\dotprod{\gradf_k,\vv_k}
\leq \pi_k \gammanorm (1-\rho)\vnorm{\cc_k}.
\label{eqn:merit parameter lower bound full rank}
\end{align}

Substituting these bounds gives
\begin{align*}
    \Db(\xx_k,\pp_k;\pi_k)
    &\leq
    \pi_k \gammanorm(1-\rho)\vnorm{\cc_k} + \dotprod{\gradf_k,\ww_k}
    - \taunull \dotprod{\gradf_k,\ww_k} - \pi_k \gammanorm \vnorm{\cc_k} \\
    &\leq
    -(1-\taunull)\gammatan \vnorm{\ww_k}^2
    - \pi_k \gammanorm \rho \vnorm{\cc_k}.
\end{align*}

\paragraph{Case 2: $\vnorm{\hPP_k\gradf_k} \leq \epsp$.}
Here, $\ww_k=0$, and \cref{eqn:inexact normal component descent} together with
\cref{eqn:merit parameter lower bound full rank} yield
\begin{align*}
    \Db(\xx_k,\pp_k;\pi_k)
    &\leq
    \dotprod{\gradf_k,\vv_k}
    + \pi_k\big(\vnorm{\JJ_k\vv_k+\cc_k}-\vnorm{\cc_k}\big) \\
    &\leq
    \pi_k \gammanorm(1-\rho)\vnorm{\cc_k}
    - \pi_k \gammanorm \vnorm{\cc_k} \\
    &=
    -\pi_k \gammanorm \rho \vnorm{\cc_k}.
\end{align*}

This coincides with \cref{eqn:indirect case directional derivative upper bound}, since $\ww_k=0$.
\end{proof}

\begin{lemma}[Line Search Termination] \label{lemma:indirect line search termination}
    Under \cref{ass:basic assumptions,ass:jacobian regularity}, there exists $\alphat > 0$, such that, for all iterations of \cref{alg:indirect primal-dual algorithm}, the step size selected by the line search satisfies $\alpha_k \geq \alphat$.
\end{lemma}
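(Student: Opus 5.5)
The plan mirrors the proof of \cref{lemma:direct line search termination}. Define
\[
\Delta_k(\alpha) \defeq \phi(\xx_k+\alpha\pp_k;\pi_k)-\phi(\xx_k;\pi_k)-\eta\alpha\Db_k .
\]
Applying \cref{eqn:merit function upper bound} from \cref{lemma:descent lemma} gives
\[
\Delta_k(\alpha)\le \alpha(1-\eta)\Db_k+\tfrac{\alpha^2}{2}(\pi_kL_c+L_f)\vnorm{\pp_k}^2 .
\]
For the linear term, I will use \cref{lemma:indirect primal update descent on merit function} together with $\pi_k\ge\pi_{-1}$ from \cref{lemma:indirect case merit parameter boundedness}. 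This bounds $(1-\eta)\Db_k$ above by
\[
-(1-\eta)\min\{(1-\taunull)\gammatan,\ \gammanorm\rho\pi_{-1}\}\bigl(\vnorm{\ww_k}^2+\vnorm{\cc_k}\bigr).
\]
Both cases ($\ww_k=0$ or not) are covered, since that lemma holds in either case.

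The main difference from the direct case is in the quadratic term. Now $\vv_k$ need not lie in $\Range(\JJ_k^\transpose)$ orthogonally to $\ww_k$ in a guaranteed way: $\ww_k\in\Range(\hPP_k)$ while $\vv_k\in\Range(\JJ_k^\transpose)$, and $\Range(\hPP_k)$ overestimates the null space. So I will not claim $\vnorm{\pp_k}^2=\vnorm{\ww_k}^2+\vnorm{\vv_k}^2$. Instead I will use the crude bound
\[
\vnorm{\pp_k}^2\le 2\vnorm{\ww_k}^2+2\vnorm{\vv_k}^2 .
\]
Then I apply \cref{eqn:inexact normal component norm bounds} to get $\vnorm{\vv_k}^2\le\nu_u^2\vnorm{\cc_k}^2\le \nu_u^2\kappa_c\vnorm{\cc_k}$, using \cref{ass:item:constraint is bounded}. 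With $\pi_k\le\pit$, the quadratic term is then at most
\[
\alpha^2(\pit L_c+L_f)\max\{1,\kappa_c/\sigma_\JJ^2\}\bigl(\vnorm{\ww_k}^2+\vnorm{\cc_k}\bigr).
\]
Strictly, orthogonality does in fact hold here: $\Range(\hbOmega)\subseteq\Range(\JJ^\transpose)$ only gives $\Range(\hPP)\supseteq\Null(\JJ)$, so $\hPP\vv\neq0$ in general and orthogonality is not guaranteed. The factor $2$ route avoids the issue entirely.

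Collecting terms, $\Delta_k(\alpha)\le0$ whenever
\[
\alpha\le \frac{(1-\eta)\min\{(1-\taunull)\gammatan,\ \gammanorm\rho\pi_{-1}\}}{(\pit L_c+L_f)\max\{1,\kappa_c/\sigma_\JJ^2\}}.
\]
Backtracking with factor $\theta$ from $\alpha=1$ then gives $\alpha_k\ge\alphat$, where $\alphat$ is the minimum of $1$ and $\theta$ times this threshold. The only step requiring care is the handling of $\vnorm{\pp_k}^2$ without orthogonality. The rest follows by direct substitution of the earlier lemmas, and the resulting constants depend only on the assumption constants and the algorithm parameters, uniformly in $k$.
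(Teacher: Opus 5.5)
Your proof is correct and follows the paper's argument essentially step for step: you use the same $\Delta_k(\alpha)$ bound, the same factor-$2$ bound on $\vnorm{\pp_k}^2$ in place of assuming $\vv_k \perp \ww_k$, and arrive at the same threshold and the same $\alphat$ (note that $\kappa_c/\sigma_\JJ^2 = \kappa_c\nu_u^2$). The only blemish is the sentence beginning ``Strictly, orthogonality does in fact hold here,'' which contradicts its own conclusion; it should say that orthogonality does \emph{not} hold in general.
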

\begin{proof}
    Similarly to the proof of \cref{lemma:direct line search termination}, we have
    \begin{align*}
        \Delta_k(\alpha) \leq \alpha(1-\eta)\Db_k + \frac{\alpha^2}{2} \left(\pi_kL_c+L_f \right) \vnorm{\pp_k}^2.
    \end{align*}
    Since $\vv_k$ and $\ww_k$ may no longer be orthogonal, we bound  
    \[\vnorm{\pp_k}^2 \leq 2\vnorm{\vv_k}^2+2\vnorm{\ww_k}^2.\] 
    This bound, along with
    \cref{lemma:indirect primal update descent on merit function,lemma:indirect case merit parameter boundedness,lemma:inexact normal component properties}, yields
    \begin{align*}
        \Delta_k(\alpha) &\leq -\alpha(1-\eta) \left( (1-\taunull)\gammatan\vnorm{\ww_k}^2 + \pi_k\gammanorm\rho\vnorm{\cc_k} \right) + \alpha^2 \left( \pi_kL_c+L_f \right) \left(
        \vnorm{\ww_k}^2+\vnorm{\vv_k}^2 \right) \\
        &\leq -\alpha(1-\eta) \left((1-\taunull)\gammatan\vnorm{\ww_k}^2 + \pi_{-1}\gammanorm\rho\vnorm{\cc_k} \right) + \alpha^2 \left( \pit L_c+L_f \right) \left( \vnorm{\ww_k}^2+\nu_u^2\vnorm{\cc_k}^2 \right).
    \end{align*}
    By \cref{ass:item:constraint is bounded}, we have
    \[
        \nu_u^2\vnorm{\cc_k}^2 \leq \kappa_c\nu_u^2\vnorm{\cc_k},
    \]
    and therefore
    \[
        \vnorm{\ww_k}^2+\nu_u^2\vnorm{\cc_k}^2 \leq \max\{1,\kappa_c\nu_u^2\}
        \left(\vnorm{\ww_k}^2+\vnorm{\cc_k}\right).
    \]
    Moreover,
    \begin{align*}
        &(1-\taunull)\gammatan\vnorm{\ww_k}^2 + \pi_{-1}\gammanorm\rho\vnorm{\cc_k} \\
        &\qquad \geq \min\left\{ (1-\taunull)\gammatan, \pi_{-1}\gammanorm\rho \right\} \left( \vnorm{\ww_k}^2+\vnorm{\cc_k} \right).
    \end{align*}
    Combining the above estimates gives
    \begin{align*}
        \Delta_k(\alpha) \leq \alpha \Bigg( -(1-\eta) \min\left\{(1-\taunull)\gammatan, \pi_{-1}\gammanorm\rho \right\} + \alpha
        \left(\pit L_c+L_f \right) \max\left\{ 1,\kappa_c\nu_u^2 \right\} \Bigg) \left( \vnorm{\ww_k}^2+\vnorm{\cc_k} \right).
    \end{align*}
    Hence, the line-search criterion is satisfied for any step size $\alpha$ such that
    \[
        \alpha \leq \frac{(1-\eta) \min\left\{ (1-\taunull)\gammatan, \pi_{-1}\gammanorm\rho \right\}}{\left(\pit L_c+L_f\right)\max\left\{1,\kappa_c\nu_u^2\right\}}.
    \]
    The line search in \cref{alg:indirect primal-dual algorithm} either terminates with a unit step size or, after backtracking, returns a step length that undershoots the above bound by at most a factor $\theta$. Consequently,
    \begin{align}
        \label{eq:alphat}
        \alphat \defeq \min\left\{ 1, \frac{\theta(1-\eta) \min\left\{ (1-\taunull)\gammatan, \pi_{-1}\gammanorm\rho \right\} }{ \left( \pit L_c+L_f \right) \max\left\{ 1,\kappa_c\nu_u^2 \right\} } \right\},
    \end{align}
    which establishes the result.
\end{proof}

Finally, we are ready to prove \cref{thm:indirect case convergence}. 

\begin{proof}[Proof of \cref{thm:indirect case convergence}]
Define
\begin{align}
\label{eq:indirect case Gamma}
    \Gamma_c' \defeq \alphat\eta\rho\gammanorm,
    \qquad
    \Gamma_p' \defeq \frac{\alphat\eta(1-\taunull)\gammatan\omegatan^2}{\pit},
\end{align}
where $\gammatan$ and $\omegatan$ are as in
\cref{lemma:direct case descent of tangent component},
$\gammanorm$ is as in
\cref{lemma:inexact normal component properties},
and $\alphat$ is as in \cref{eq:alphat}.
Suppose that the termination conditions in
\cref{eqn:indirect termination condition}
fail to hold for iterations $k=0,\ldots,K-1$.
Partition the iteration indices into
\[
    \sK_p \defeq \left\{k\in[K-1]\mid \vnorm{\hPP_k\gradf_k}>\epsp\right\},
    \qquad
    \sK_c \defeq [K-1]\setminus \sK_p.
\]

First consider an iteration $k\in\sK_c$. Since the algorithm has not
terminated, we must have
\[
    \vnorm{\cc_k}>\epsc,
    \qquad
    \vnorm{\hPP_k\gradf_k}\leq\epsp.
\]
Hence, $\ww_k=\mathbf{0}$ and the decrease is entirely due to the
normal component $\vv_k$. By the line search criterion
\cref{eqn:line search descent criteria}, together with
\cref{lemma:indirect line search termination,lemma:indirect primal update descent on merit function},
we obtain
\begin{align*}
    \frac{\phi(\xx_{k+1};\pi_k)-\phi(\xx_k;\pi_k)}{\pi_k}
    &\leq \frac{\alpha_k\eta}{\pi_k}\Db(\xx_k,\pp_k;\pi_k) \\
    &\leq -\alphat\eta\rho\gammanorm\vnorm{\cc_k} \\
    &\leq -\alphat\eta\rho\gammanorm\epsc \\
    &= -\Gamma_c'\epsc.
\end{align*}

Now consider an iteration $k\in\sK_p$. In this case, we use the decrease contributed by the tangent component, since $\vnorm{\cc_k}$ may already satisfy $\vnorm{\cc_k} \leq \epsc$. Applying
\cref{lemma:indirect primal update descent on merit function,lemma:indirect line search termination,lemma:indirect case merit parameter boundedness,lemma:indirect case descent of tangent component},
together with the line search criterion
\cref{eqn:line search descent criteria}, yields
\begin{align*}
    \frac{\phi(\xx_{k+1};\pi_k)-\phi(\xx_k;\pi_k)}{\pi_k}
    &\leq \frac{\alpha_k\eta}{\pi_k}\Db(\xx_k,\pp_k;\pi_k) \\
    &\leq -\frac{\alphat\eta(1-\taunull)\gammatan}{\pit}\vnorm{\ww_k}^2 \\
    &\leq -\frac{\alphat\eta(1-\taunull)\gammatan\omegatan^2}{\pit}
    \vnorm{\hPP_k\gradf_k}^2 \\
    &\leq -\Gamma_p'\epsp^2.
\end{align*}

Since $\sK_c$ and $\sK_p$ partition $[K-1]$, it follows that
\begin{align*}
    \sum_{k=0}^{K-1}\frac{\phi(\xx_{k+1};\pi_k)-\phi(\xx_k;\pi_k)}{\pi_k}
    &= \sum_{k\in\sK_c}\frac{\phi(\xx_{k+1};\pi_k)-\phi(\xx_k;\pi_k)}{\pi_k}
    + \sum_{k\in\sK_p}\frac{\phi(\xx_{k+1};\pi_k)-\phi(\xx_k;\pi_k)}{\pi_k} \\
    &\leq -|\sK_c|\Gamma_c'\epsc - |\sK_p|\Gamma_p'\epsp^2 \\
    &\leq -K\min\{\Gamma_c',\Gamma_p'\}\min\{\epsc,\epsp^2\}.
\end{align*}

On the other hand, by the same telescoping argument used in the proof
of \cref{thm:direct case convergence}, we have
\begin{align*}
    \sum_{k=0}^{K-1}\frac{\phi(\xx_{k+1};\pi_k)-\phi(\xx_k;\pi_k)}{\pi_k}
    \geq -\vnorm{\cc_0} + \frac{\flow-f_0}{\pi_{-1}}.
\end{align*}

Combining the upper and lower bounds gives
\[
    K
    \leq
    \frac{(f_0-\flow)/\pi_{-1}+\vnorm{\cc_0}}
    {\min\{\Gamma_c',\Gamma_p'\}\min\{\epsc,\epsp^2\}},
\]
which contradicts the definition of $K$ in \cref{thm:indirect case convergence}.
\end{proof}

\section{Numerical Experiments} \label{sec:numerical}
We now compare the practical performance of our primal-dual Newton methods against augmented Lagrangian and SQP-based baselines on a number of nonconvex problems and nonlinear constraints.

\begin{itemize}
\item \textbf{Primal-Dual Newton (PDN, our method).}
For our numerical experiments, we implement the direct variant of our method, \cref{alg:direct primal-dual algorithm}, using both least-squares multipliers and the zero-dual update. Several practical modifications are incorporated to improve robustness.
First, we employ a forward-tracking line search for negative-curvature steps. In particular, if a unit step satisfies \cref{eqn:line search descent criteria}, the step size is increased until the condition fails and the largest acceptable step is taken; see, e.g., \citep[Algorithm 3]{liu_newton-mr_2023}. This often improves the exploitation of negative curvature in practice \citep{liu_newton-mr_2023,curtisExploitingNegativeCurvature2019,gouldExploitingNegativeCurvature2000}. 
Second, we deactivate individual step components once their associated optimality conditions have been satisfied. In particular, the normal step is disabled once the iterate is sufficiently feasible, while the tangent step is disabled once primal stationarity is attained. Each step is reactivated if its corresponding optimality condition is subsequently violated. This allows the algorithm to focus computational effort on the remaining unsatisfied optimality condition rather than continuing to reduce a quantity that is already below tolerance. An important consequence is that, when the normal step is disabled, the merit parameter is no longer updated. This prevents the accumulation of unnecessarily large parameter values when the constraint violation is already small.

Most parameters can be assigned typical values: $\epsp=\epsc=10^{-5}$, $\eta=10^{-4}$, $\theta=0.5$, $\pi_{-1}=10^{-6}$, and $\rho=0.1$. Although the theory assumes $\sigma>0$, we found that setting $\sigma=0$ performs well in practice.
The only parameter that typically requires tuning is $\tautan$, which controls the accuracy of the tangent solve. Empirically, only a moderate number of inner iterations are needed for good performance, though the appropriate choice of $\tautan$ depends on problem conditioning. 
We emphasize that this relatively simple tuning requirement contrasts with previous inexact SQP methods, whose performance depends on a considerably larger collection of coupled inner-solver parameters and termination criteria.

\item \textbf{Classical SQP Approach.}
To benchmark our method, we implemented the inexact SQP algorithm of \citep{byrdInexactNonconvexNewtonMethod2010}, which is well suited to the large-scale, deterministic, nonconvex, full-rank setting considered here. Although \citet{byrdInexactNonconvexNewtonMethod2010} use GMRES as the inner solver, they also report that MINRES produced similar performance. Motivated by this observation, and by the substantial memory overhead of GMRES in large-scale settings, we use CR for the inner solves. This choice is consistent with the equivalence between MINRES and CR discussed in \cref{sec:CR algorithm background}, while also simplifying the implementation and aligning the comparison solver with the CR-based inner solves used by our method. Our CR implementation incorporates safeguards that increase regularization and restart the iteration if CR breaks down numerically or fails to terminate within (d+c) iterations. Motivated by \citep{curtis_inexact_2021}, we also replace the Hessian regularization in \citep{byrdInexactNonconvexNewtonMethod2010} with the blended scheme
\[
\bar{\HH}=10^{-i}\HH + (1-10^{-i})\eye,
\]
where \(i\) is initialized at \(0\) and updated by \(i \gets i+1\) whenever the inner solve is restarted. We found this modification to improve stability on the ill-conditioned problems in \cref{sec:PINN}.

For the nonlinear least-squares problem in \cref{sec:NLLS}, we use the parameters from \citep{byrdInexactNonconvexNewtonMethod2010}, except that we set \(\theta=10^{-3}\), where $\theta$ is a tolerance for sufficiently positive curvature in the search direction, since the original implementation assumes access to the full Lagrangian Hessian. For the machine learning problems in \cref{sec:PINN}, we found the original settings too restrictive, likely due to the severe ill-conditioning, and instead used
\[
\kappa=10^{-1},\qquad
\epsilon=10^{-1},\qquad
\beta=10^3,\qquad
\tau=0.2,\qquad
\theta=10^{-6},\qquad
\psi=10^4,\qquad
\sigma=\tau(1-\epsilon).
\]
Additionally, we initialized the merit parameter to \(\pi_{-1}=10^{-4}\).

\item \textbf{Augmented Lagrangian (ALM).}
We also compare against an augmented Lagrangian algorithm. Specifically, we consider a classical approach (\citep[Chapter 17]{nocedal_numerical_2006}), which consists of applying an inner solver to the unconstrained penalized Lagrangian objective 
\begin{align*}
    \sL_\rho (\xx; \blambda) = f(\xx) + \dotprod{\cc(\xx) , \blambda} + \frac{\rho}{2} \vnorm{\cc(\xx)}^2,
\end{align*}
for some positive penalty parameter $\rho$. Once the penalized objective is close to optimal (as measured by $\vnorm{\grad_x \sL_\rho(\xx;\blambda)}$), the dual variables and penalty parameters can be updated. Following \citep[Algorithm 1]{basir_physics_2022}, and to avoid over-prioritizing feasibility, we update the penalty parameter only when two conditions are met: (i) constraint optimality is not yet satisfied, and (ii) the constraint violation has not decreased sufficiently relative to the previous update. For the outer-loop hyperparameters (initial penalty, penalty growth rate, and tolerance on constraint reduction), we use the choices in \citep[Algorithm 1]{basir_physics_2022}.
For the inner solver, we use L-BFGS with a zoom line search enforcing the Wolfe descent and curvature conditions; we use standard defaults (line search constants $10^{-4}$ and $0.9$, respectively, and memory size $20$). 
We observed that the initial relative tolerance parameter for the subproblem solver, $\tau_{\mathrm{ALM}}$, can have a significant effect on the optimization dynamics, so we tune this parameter on a per-problem basis. For completeness, we state the ALM method in \cref{alg:alm-lbfgs}.

\end{itemize}

All methods are implemented in JAX \citep{jax2018github} and compared using wall-clock time rather than iteration count. While not perfect, this metric provides a more meaningful comparison because the methods access the objective and constraint oracles in different ways. Each run is terminated as soon as either the prescribed optimality and feasibility tolerances are satisfied or the maximum time budget is exceeded. 

The reported ``stationarity" measures differ slightly across methods: for PDN we use the null-space projected gradient norm $\vnorm{\PPNJxx \grad f(\xx)}$, while for ALM and SQP we use the primal Lagrangian gradient norm $\vnorm{\Lgrad(\xx,\blambda)}$. These quantities provide comparable termination criteria under \cref{lemma:termination condition equivalence}. Unless otherwise stated, all experiments are run on a local CPU, except for the Helmholtz PINN experiment, which is run on a cluster GPU.

We manually tune the inner tolerance parameters $\tautan$ (for PDN) and the relative subproblem tolerance $\tau_{\mathrm{ALM}}$ (for ALM). In practice, this is done by sweeping over a range of values and selecting those that yield a reasonable trade-off between constraint satisfaction, primal stationarity, and early termination.

The code for our experiments can be found in our GitHub \href{https://github.com/oscar-99/PrimalDualNewtonRelease}{repository}.

\subsection{Constrained Nonlinear Least Squares}
\label{sec:NLLS}
In this experiment, we consider a constrained nonlinear least squares problem
\[
    \min_{\ww \in \real^d,\; b \in \mathbb{R}}
    \
    \frac{1}{2N}\sum_{i=1}^N
    \left(
    \sigma(\dotprod{\zz_i,\ww} + b) - y_i
    \right)^2 \
    \text{s.t.} \
    \AA \ww = \bbeta, \  \vnorm{\ww}^2 = \rho^2,
\]
where $\{(\zz_i, y_i)\}_{i=1}^N$ represents the dataset, and $\sigma(t) = (1+e^{-t})^{-1}$ is the sigmoid function. The linear constraint matrix is generated as 
$\AA_{ij} \sim \sN(0,1) $, while the right-hand side $\bbeta$ and radius
$\rho^2$ are chosen from a random reference weight vector $\ww_{\mathrm{ref}} \sim \sN(0, \eye/d)$ via
\[
\bbeta = \AA \ww_{\mathrm{ref}},
\quad
\rho^2 = \|\ww_{\mathrm{ref}}\|_2^2,
\]
so that $\ww_{\mathrm{ref}}$ is feasible by construction. We generate the initial point via $(\ww_0, b_0) \sim \sN(0, \eye)$. This problem forms a nonconvex objective with nonlinear constraints, but is relatively well behaved (i.e., well defined solution, full rank Jacobian) and is suitable for testing the functionality of our method. 

In \cref{fig:nls_mnist_alg_comparison}, we consider a problem with relatively few constraints, while in \cref{fig:nls_fashion200_alg_comparison} we consider a significantly larger number of constraints. In both cases, our method outperforms the competitor algorithms, despite the additional cost of Jacobian factorization. The performance of the zero-dual and least-squares dual variants of PDN is also very similar.

In \cref{fig:fashion_regularisation_comparison}, we study PDN (least-squares dual) in the same setting as \cref{fig:nls_fashion200_alg_comparison}, but with Hessian regularization at each iteration, i.e., $\HH \leftarrow \HH + \lambda \eye$. This experiment highlights the importance of detecting and exploiting negative curvature rather than suppressing it through regularization. As shown in \cref{fig:fashion_regularisation_comparison}, regularization degrades performance, and when $\lambda$ is sufficiently large (e.g., $\lambda=0.1,1,10$) to prevent negative curvature detection, the algorithm suffers significantly. This example shows how regularizing away negative curvature can distort crucial curvature information and harm convergence.

\begin{figure}[ht]
    \centering
    \includegraphics[width=\linewidth]{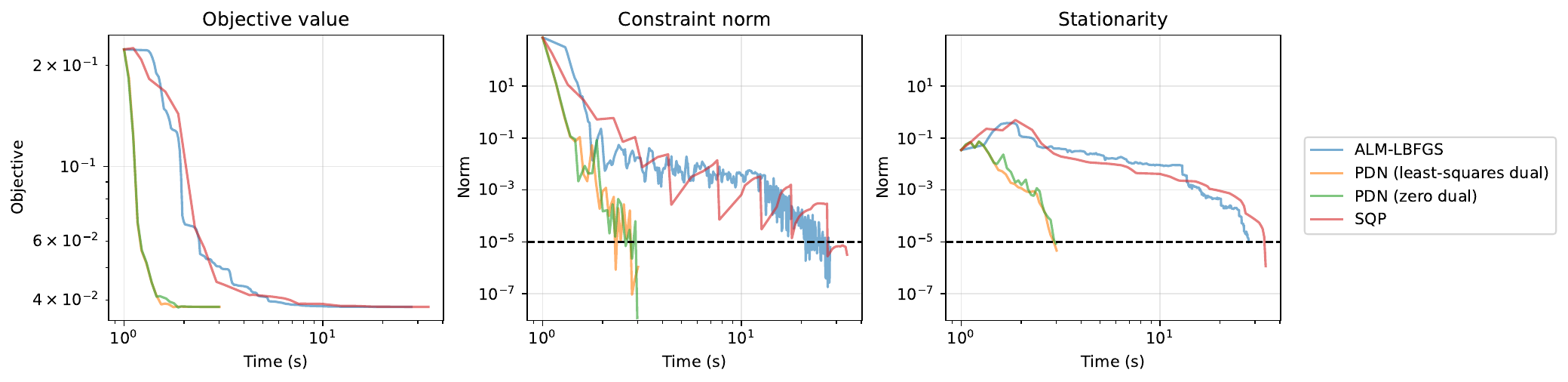}
    \caption{Comparative performance of the algorithms for the experiment in \cref{sec:NLLS} on the \texttt{MNIST} dataset (odd versus even labels) with 20 linear constraints, i.e., $d+1 = 785$ and $c = 21$. The PDN methods clearly outperform the comparison algorithms across all performance metrics, satisfying the termination conditions approximately an order of magnitude faster.}
    \label{fig:nls_mnist_alg_comparison}
\end{figure}

\begin{figure}[ht]
    \centering
    \includegraphics[width=\linewidth]{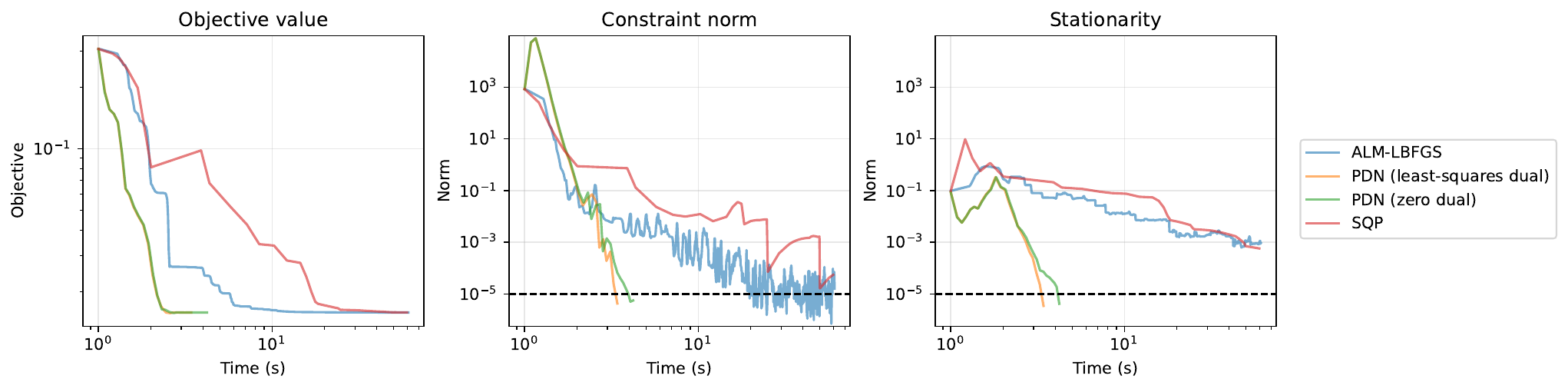}
    \caption{Comparative performance of the algorithms for the experiment in \cref{sec:NLLS} on the \texttt{FashionMNIST} dataset (odd versus even labels) with 200 linear constraints, i.e., $d+1 = 785$ and $c = 201$. The PDN methods clearly outperform the comparison algorithms across all termination metrics and, in particular, are the only methods to satisfy the primal stationarity termination condition within the time limit.}
    \label{fig:nls_fashion200_alg_comparison}
\end{figure}

\begin{figure}[ht]
    \centering
    \includegraphics[width=\linewidth]{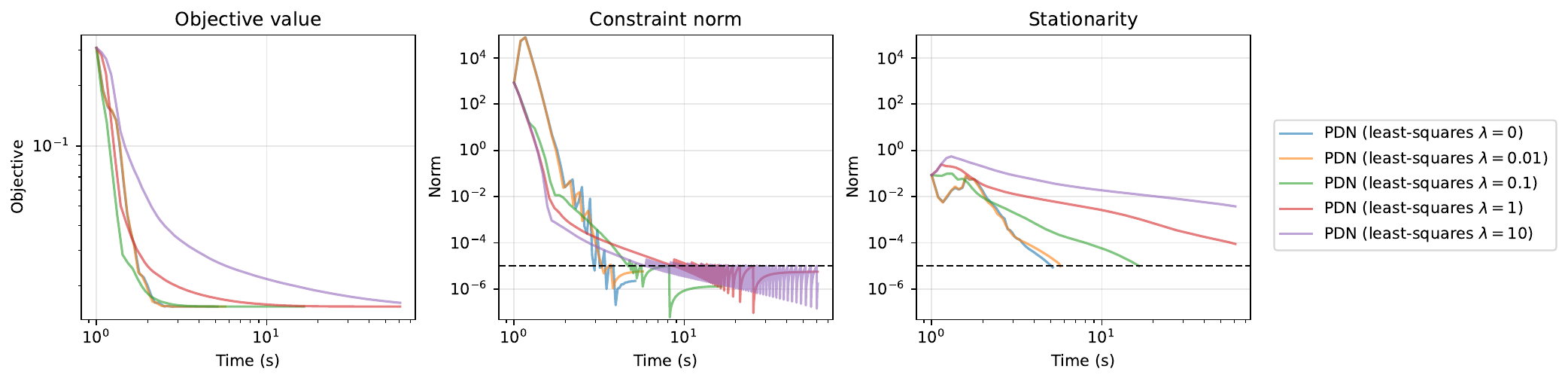}
    \caption{Comparison of PDN with least-squares multipliers under different levels of Lagrangian Hessian regularization in the setting of \cref{fig:nls_fashion200_alg_comparison}. As the regularization strength increases, performance deteriorates, particularly with respect to the primal stationarity metric. This demonstrates that excessive Hessian regularization can be detrimental to convergence.}
    \label{fig:fashion_regularisation_comparison}
\end{figure}

\subsection{Physics-Informed Neural Networks (PINNs)}
\label{sec:PINN}
\paragraph{Introduction to Constrained PINNs.}
Consider a boundary value problem of the form
\begin{align*}
    \sF[u](\zz) &= 0, \quad \zz \in \sD, \\
    \sB[u](\zz) &= 0, \quad \zz \in \partial\sD,
    \tageq\label{eqn:general PDE}
\end{align*}
where $u : \sD \subset \real^m \to \real$ is a scalar field and $\zz \in \sD$ denotes the spatial (or spatio-temporal) coordinate. The operator $\sF[\cdot]$ represents a (generally nonlinear) differential operator governing the PDE in the interior of the domain $\sD$, while $\sB[\cdot]$ is a differential operator that encodes the boundary conditions imposed on $\partial\sD$.

PINNs, originating in \citep{dissanayakeNeuralnetworkbasedApproximationsSolving1994} and popularized by \citet{raissi_physics_informed_2019}, approximate the solution to \cref{eqn:general PDE} using a neural network $\hat u_{\xx}:\real^m \to \real$ with parameters $\xx \in \real^d$. Given interior collocation points
$\sZ_{\tint}=\{\zz_i\}_{i=1}^{N_{\tint}}\subset \sD$ and boundary collocation
points $\sZ_{\bndry}=\{\zz_i\}_{i=1}^{N_{\bndry}}\subset \partial\sD$,
automatic differentiation is used to evaluate the PDE and boundary residuals directly.
A standard PINN formulation trains the network by minimizing a weighted sum of squared residuals,
\begin{align*}
    \sL(\xx)
    =
    \frac{\lambda_\sF}{N_{\tint}}
    \sum_{\zz \in \sZ_{\tint}}
    (\sF[\hat u_{\xx}](\zz))^2
    +
    \frac{\lambda_\sB}{N_{\bndry}}
    \sum_{\zz \in \sZ_{\bndry}}
    (\sB[\hat u_{\xx}](\zz))^2 .
    \tageq\label{eqn:weighted PINN objective}
\end{align*}
This formulation enables training directly from the governing equations without labeled solution data, although data-misfit terms can also be incorporated.

A drawback of this penalized formulation is that the PDE and boundary residuals may have different magnitudes and optimization dynamics, making the weights $\lambda_\sF$ and $\lambda_\sB$ problem-dependent tuning parameters. In particular, the classical PINN approach can fail to properly enforce boundary conditions, which is crucial for obtaining physically meaningful solutions \citep{wangUnderstandingMitigatingGradient2020,basir_investigating_2023}.
Moreover, in many PDE settings a reliable validation set for parameter tuning is not available \textit{a priori}. From an optimization viewpoint, \cref{eqn:weighted PINN objective} corresponds to a quadratic penalty method applied with the boundary conditions as equality constraints, which need not be enforced accurately with finite penalty weights \citep{nocedal_numerical_2006}. 
These limitations have motivated constrained PINN formulations in which the PDE residual is minimized subject to satisfaction of the sampled boundary conditions. Several recent works have explored this perspective, most commonly through augmented Lagrangian methods
\citep{basir_physics_2022,son_enhanced_2023,basirAdaptiveAugmentedLagrangian2023,huConditionallyAdaptiveAugmented2025}.
Correspondingly, such problems provide a useful test bed for our method: even for simple elliptic PDEs, the resulting optimization problems have nonconvex objectives, with highly nonlinear constraints that exhibit significant ill-conditioning in the Jacobian.

\paragraph{Problem Setup.}
We consider two two-dimensional elliptic benchmark problems with manufactured solutions. The first is a relatively benign Dirichlet Poisson equation $\Delta u = g$ on $\sD=[0,\pi]^2$, with true solution $u^\star_{\mathrm{poi}}(z_1,z_2)=\cos(z_1)\sin(z_2).$
The constrained PINN formulation is
\[
    \min_{\xx}
    \frac{1}{N_{\tint}}
    \sum_{\zz_i\in\sZ_{\tint}}
    \left(
        \Delta \hat u_{\xx}(\zz_i) - g_{\mathrm{poi}}(\zz_i)
    \right)^2
    \quad
    \text{s.t.}
    \quad
    \hat u_{\xx}(\zz_i)=h_{\mathrm{poi}}(\zz_i),
    \quad
    \zz_i\in\sZ_{\bndry}, \tageq\label{eqn:poisson pinn problem}
\]
where $\Delta$ is the Laplacian and
$g_{\mathrm{poi}}$ and $h_{\mathrm{poi}}$ are manufactured from $u^\star_{\mathrm{poi}}$.

The second is a more challenging Dirichlet Helmholtz equation
$\Delta u+k^2u=g$ on $\sD=[-1,1]^2$, with wavenumber $k=1$ and true solution $ u^\star_{\mathrm{helm}}(z_1,z_2) = \sin(\pi z_1)\sin(4\pi z_2)$. The corresponding constrained PINN formulation is
\[
    \min_{\xx}
    \frac{1}{N_{\tint}}
    \sum_{\zz_i\in\sZ_{\tint}}
    \left(
        \Delta \hat u_{\xx}(\zz_i)
        + k^2 \hat u_{\xx}(\zz_i)
        - g_{\mathrm{helm}}(\zz_i)
    \right)^2
    \quad
    \text{s.t.}
    \quad
    \hat u_{\xx}(\zz_i)=h_{\mathrm{helm}}(\zz_i),
    \quad
    \zz_i\in\sZ_{\bndry}, \tageq\label{eqn:Helmholtz pinn problem}
\]
where
$g_{\mathrm{helm}}$ and 
$h_{\mathrm{helm}}$ are manufactured from $u^\star_{\mathrm{helm}}$. This pair of examples lets us test the method on both a simple, well-behaved elliptic problem and a more oscillatory Helmholtz problem, which is known to be more difficult for PINN training \citep{wangUnderstandingMitigatingGradient2020}.

In all experiments, we use a fully connected neural network with three hidden layers of width $30$, tanh activations, domain-normalized inputs, and Xavier initialization \citep{glorotUnderstandingDifficultyTraining2010}. The interior and boundary collocation sets are generated using a Sobol sequence once at the start of training and are fixed throughout optimization. We fix the number of interior collocation points to $N_{\tint}=512$. Since our method is designed primarily for the case where the constraint Jacobian has full rank, the number of boundary collocation points must be chosen with some care. In particular, increasing $N_{\bndry}$ strengthens enforcement of the boundary conditions, but can also lead to rank deficiency in the constraint Jacobian. We monitor rank deficiency using the singular values already computed in the SVD-based normal step. Singular values below $\sigma_{\tol}=10^{-10}\max\{\sigma_1,1\}$
are treated as numerically zero, and the same tolerance is used to truncate the SVD when computing the normal step and least-squares dual update. We found that this truncation sufficiently stabilized the iterations.
For the Poisson problem, we use $N_{\bndry}=32$, for which we found that the constraint Jacobian remained full rank along the optimization trajectory. 
For the Helmholtz problem, more boundary information is needed, so we use $N_{\bndry}=128$. This results in significant rank deficiency in the boundary Jacobian.

In the Helmholtz experiments, we also use a simple merit-parameter shrinkage
heuristic. Once $\vnorm{\cc_k}\leq\epsc$, we shrink the merit parameter according to
\[
    \pi_{k+1} = \max\{0.9\pi_k,\pi_{-1}\}.
\]
This prevents the merit parameter from remaining unnecessarily large after the constraints have been enforced, which can otherwise slow progress toward objective optimality. Although this heuristic is not covered by the monotone merit parameter argument used in the convergence theory, we observe empirically that it does not destabilize the method in these experiments.

\paragraph{Results and Discussion.}
In \cref{fig:poisson_comparison,fig:helmholtz_comparison}, we compare the performance of our method in terms of objective, constraint norm and primal stationarity. Meanwhile, in \cref{fig:poisson error heatmap,fig:helmholtz error heatmap} we compare the pointwise error of the identified solution with the true solution. Note that the SQP method exhibited significantly larger errors than the other methods and was therefore omitted from this comparison to preserve the scale. Finally, in \cref{tab:helmholtz_final_test_error,tab:poisson_final_test_error} we report the final relative $L_2$ and $L_\infty$ test errors. 

In \cref{fig:poisson_comparison}, we see that our method outperforms the alternatives on both the stationarity metric and the constraint norm; indeed, the PDN variants are the only methods to satisfy the termination conditions before the time limit. In \cref{fig:poisson error heatmap} we see that this stronger optimization performance, particularly on the constraints, translates into a noticeably more accurate recovered solution. Indeed, the PDN solutions exhibit significantly smaller error near the boundary, despite the very limited boundary data used in this example. This is reflected in the final test metrics in \cref{tab:poisson_final_test_error}, where the PDN variants attain the best overall errors. Although ALM reaches a slightly lower objective value, i.e., the objective in \cref{eqn:poisson pinn problem}, this does not translate into improved test accuracy. This suggests that a lower objective value alone is not sufficient in this setting: accurate enforcement of the boundary conditions is also critical for obtaining a high-quality solution.

For the Helmholtz problem, our method retains a clear performance advantage on the termination metrics (\cref{fig:helmholtz_comparison}) compared with the ALM and SQP methods, particularly on the constraint norm. This is despite the rank-deficient setting, suggesting that the PDN framework is reasonably robust to violation of the full-rank assumption when combined with rank truncation and the merit-parameter shrink modification. In this example, the PDN variants also outperform the competing methods on the objective value itself, which is likely a consequence of the merit shrink strategy allowing the method to refocus on objective reduction once feasibility has been sufficiently improved. This stronger performance on the optimization and termination metrics is reflected in the recovered solution quality: \cref{fig:helmholtz error heatmap} shows notably better performance for the PDN methods, particularly near the boundary. In contrast, ALM appears to struggle to enforce the constraints, even with the larger number of collocation points used in this problem. Consistent with these visual observations, in \cref{tab:helmholtz_final_test_error} we observe that the PDN methods also achieve the best overall test errors. 

Until this point the zero-dual and least-squares variants of PDN have performed relatively similarly on the termination metrics; however, in this case we see that the least-squares dual variant performs better on the primal stationarity metric in particular. This could be due to the more accurate second-order information obtained when the Lagrangian Hessian is evaluated using the least-squares multipliers. In the large-scale, ill-conditioned setting considered here, that additional curvature accuracy appears to be more important, leading to more effective reduction of the primal optimality measure. It is also plausible that, when combined with the merit-parameter shrink strategy, this more accurate curvature information allows the method to make more sustained progress on the primal objective once feasibility has been largely enforced.

\begin{figure}[ht]
    \centering
    \includegraphics[width=\linewidth]{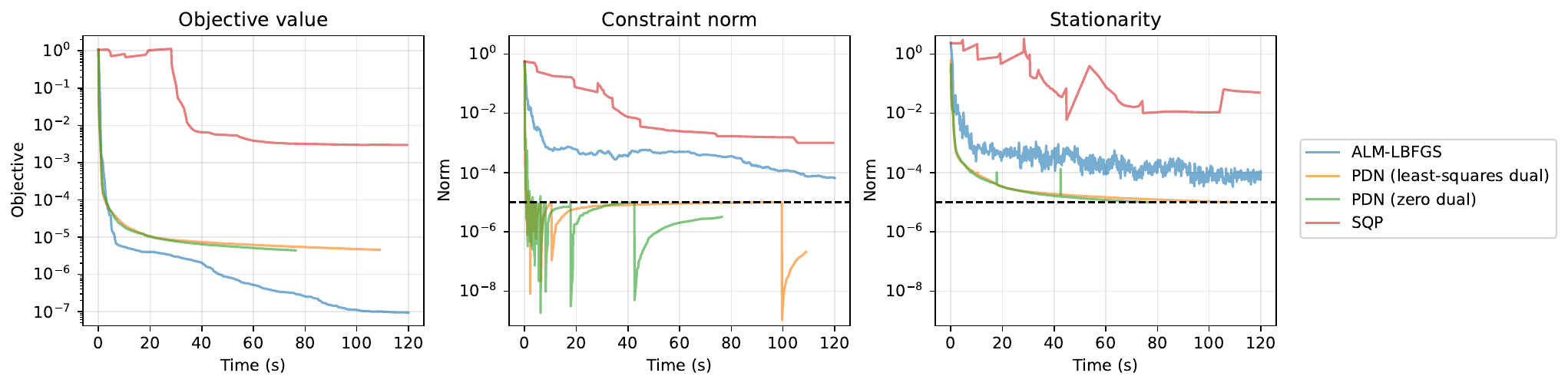}
    \caption{Comparative performance of the algorithms for the Poisson PINN experiment in \cref{sec:PINN}. The PDN methods are the only methods to satisfy the termination conditions within the time limit. They reduce constraint violations to levels orders of magnitude lower than those reached by the comparison methods, and do so in significantly less time.}
    \label{fig:poisson_comparison}
\end{figure}

\begin{figure}[ht]
    \centering
    \includegraphics[width=\linewidth]{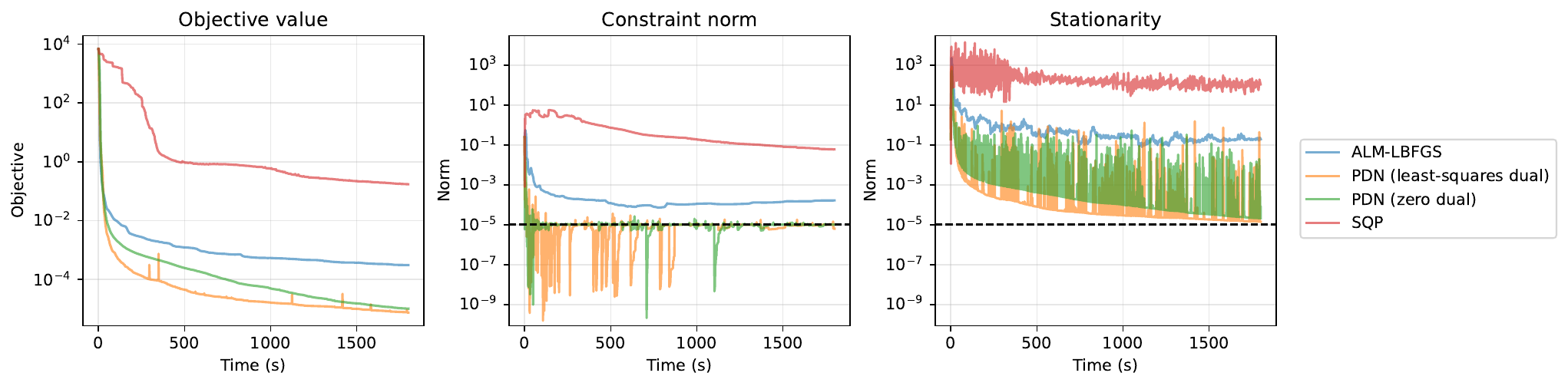}
    \caption{Comparative performance of the algorithms for the Helmholtz PINN experiment in \cref{sec:PINN}. The PDN methods outperform the comparison methods across all performance metrics. In particular, they reduce constraint violations to levels orders of magnitude lower than those reached by the comparison methods, and do so in significantly less time.}
    \label{fig:helmholtz_comparison}
\end{figure}

\begin{figure}[ht]
    \centering
    \includegraphics[width=\linewidth]{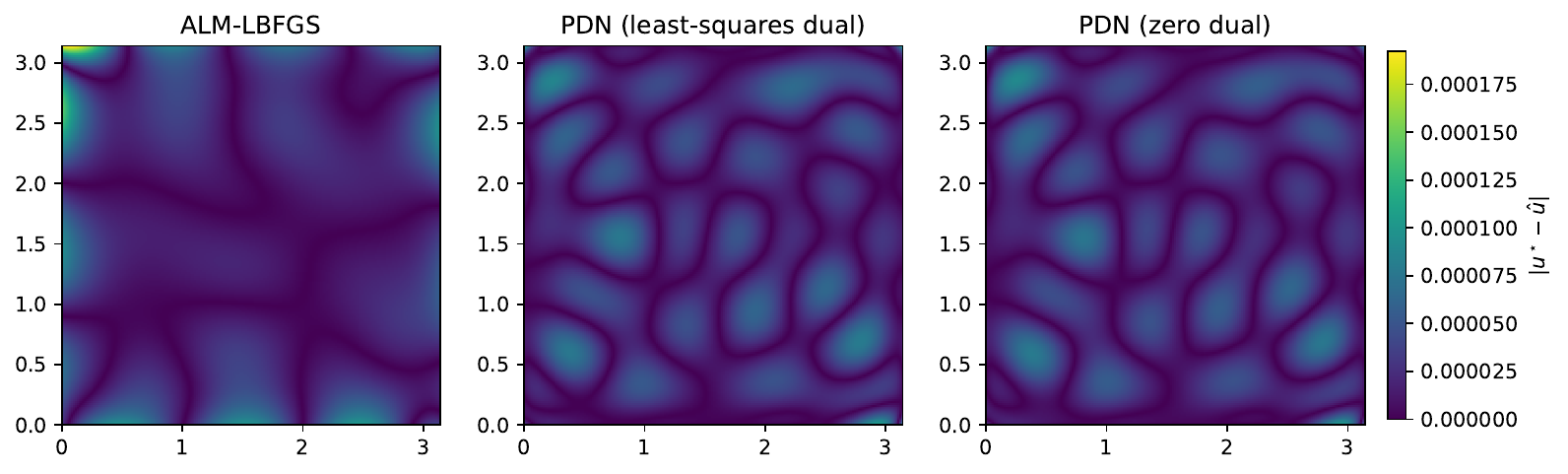}
    \caption{Heatmap of pointwise absolute error, $|\uu^\star - \hat{\uu}|$, for the Poisson PINN experiment in \cref{sec:PINN}. SQP is omitted to preserve the color scale. Compared to our method, ALM exhibits noticeably larger errors along the boundary.}
    \label{fig:poisson error heatmap}
\end{figure}

\begin{figure}[ht]
    \centering
    \includegraphics[width=\linewidth]{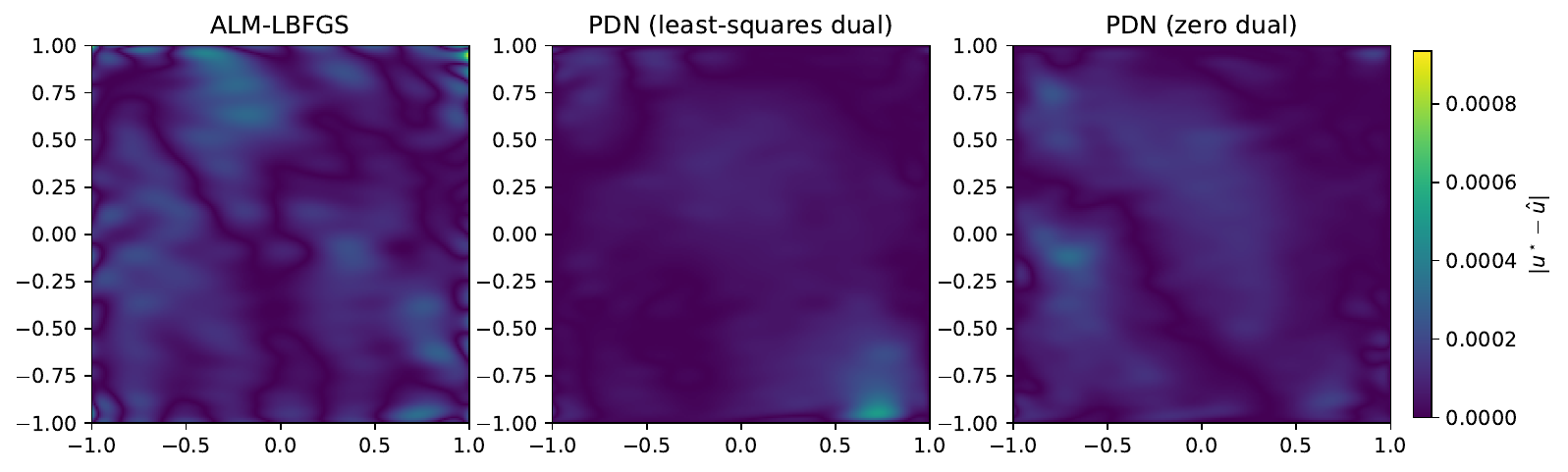}
    \caption{Heatmap of pointwise absolute error, $|\uu^\star - \hat{\uu}|$, for the Helmholtz PINN experiment in \cref{sec:PINN}. SQP is omitted to preserve the color scale. ALM performs notably worse near the boundary than our method.}
    \label{fig:helmholtz error heatmap}
\end{figure}

\begin{table}[H]
    \centering
    \begin{tabular}{|c|c|c|c|c|}
        \hline
         & \textbf{ALM-LBFGS} & \textbf{PDN (least-squares dual)} & \textbf{PDN (zero dual)} & \textbf{SQP} \\
         \hline
         $\vnorm{\hat{\uu}-\uu^\star}/\vnorm{\uu^\star}$ & $6.311\times 10^{-5}$ & $5.655\times 10^{-5}$ & $\mathbf{5.591\times 10^{-5}}$ & $6.363\times 10^{-3}$ \\
         \hline
         $\vnorm{\hat{\uu}-\uu^\star}_\infty$ & $1.925\times 10^{-4}$ & $1.090\times 10^{-4}$ & $\mathbf{1.064\times 10^{-4}}$ & $1.035\times 10^{-2}$ \\
         \hline
    \end{tabular}
    \caption{Final $L_2$ relative and $L_\infty$ test errors for the Poisson PINN example, evaluated on a uniform $64\times64$ test grid, where $\hat{\uu}$ and $\uu^\star$ denote the neural-network predictions and exact solution values on the test grid, respectively. Bold indicates the best-performing method for each metric.}
    \label{tab:poisson_final_test_error}
\end{table}

\begin{table}[H]
    \centering
    \begin{tabular}{|c|c|c|c|c|}
        \hline
         & \textbf{ALM-LBFGS} & \textbf{PDN (least-squares dual)} & \textbf{PDN (zero dual)} & \textbf{SQP} \\
         \hline
         $\vnorm{\hat{\uu}-\uu^\star}/\vnorm{\uu^\star}$ & $2.390\times 10^{-4}$ & $\mathbf{1.382\times 10^{-4}}$ & $1.687\times 10^{-4}$ & $5.484\times 10^{-2}$ \\
         \hline
         $\vnorm{\hat{\uu}-\uu^\star}_\infty$ & $8.026\times 10^{-4}$ & $5.021\times 10^{-4}$ & $\mathbf{3.287\times 10^{-4}}$ & $1.723\times 10^{-1}$ \\
         \hline
    \end{tabular}
    \caption{Final $L_2$ relative and $L_\infty$ test errors for the Helmholtz PINN example, evaluated on a uniform $64\times64$ test grid, where $\hat{\uu}$ and $\uu^\star$ denote the neural-network predictions and exact solution values on the test grid, respectively. Bold indicates the best-performing method for each metric.}
    \label{tab:helmholtz_final_test_error}
\end{table}

\FloatBarrier

\section{Conclusion and Future Directions}

In this work, we developed a fully second-order primal-dual Newton method for equality-constrained optimization. By reformulating the primal-dual system, we were able to exploit the favorable properties of the CR inner solver, yielding an inexact primal-dual Newton method that handles indefiniteness in the Lagrangian Hessian out of the box and without distortionary regularization schemes. Under standard assumptions, we establish the first global complexity guarantees of their kind for an inexact primal-dual Newton method. At the same time, our numerical results show that the method is not merely theoretically appealing but also practically effective on challenging problems with highly nonconvex objectives and nonlinear constraints.

A natural next step is to extend the theory of the method to settings where the constraint Jacobian is rank deficient, thereby formalizing the robustness that we observe empirically in the PINN experiments. Another promising direction, in line with recent developments in SQP theory, is to generalize the framework to the stochastic-objective setting. Taken together, these two extensions would substantially broaden the scope of the method and enable experiments on larger-scale PINN problems, including settings where interior collocation points are sampled during training. Carrying out such more extensive large-scale experiments is a particularly compelling direction for future work.

\section*{Acknowledgements}

Fred Roosta was partially supported by the Australian Research Council through a Discovery Project (DP250101036).

\bibliography{bib}
\bibliographystyle{apalike}

\newpage 

\appendix 

\section[
    Proof of Lemma~\ref{lemma:descent lemma}
]{
    Proof of \cref{lemma:descent lemma}
} \label{apx:proof of descent lemma}

\begin{proof}
    Applying Lipschitz smoothness of the objective, we have for any $\pp \in \real^d$
    \begin{align*}
        |f(\xx+\pp)- f(\xx) - \dotprod{\grad f(\xx), \pp}| &\leq \frac{L_f}{2} \vnorm{\pp}^2.
    \end{align*}
    Furthermore, by the Lipschitz smoothness of the constraints and the reverse triangle inequality
    \begin{align*}
        \vnorm{\cc(\xx + \pp)} - \vnorm{\cc(\xx) + \JJ(\xx)\pp}  \leq \vnorm{\cc(\xx + \pp) - \cc(\xx) - \JJ(\xx) \pp} \leq \frac{L_c}{2}\vnorm{\pp}^2,
    \end{align*}
    where the second inequality follows from a standard second-order upper bound, e.g., \citep[Theorem 3.1.6]{connTrustRegionMethods2000}.
    Applying these results to $\alpha\pp$ with $\alpha > 0$, we have 
    \begin{align*}
        \phi(\xx+ \alpha \pp; \pi) - \phi(\xx; \pi) &= f(\xx+ \alpha \pp) - f(\xx) + \pi(\vnorm{\cc(\xx + \alpha \pp)} - \vnorm{\cc(\xx)})\\
        &\leq  \alpha \dotprod{\grad f(\xx), \pp} + \pi(\vnorm{\cc(\xx) + \alpha \JJ(\xx)\pp} - \vnorm{\cc(\xx)}) + \frac{\alpha^2 \pi L_c  }{2} \vnorm{\pp}^2 + \frac{\alpha^2 L_f }{2}\vnorm{\pp}^2.
    \end{align*}
    Next, we bound the linearization of the constraint for $\alpha \in [0,1]$ as
    \begin{align*}
        \pi(\vnorm{\cc(\xx) + \alpha \JJ(\xx)\pp} - \vnorm{\cc(\xx)}) &= \pi\vnorm{\alpha\cc(\xx) + (1-\alpha)\cc(\xx) + \alpha \JJ\pp} - \vnorm{\cc(\xx)} \\
        &\leq \pi(\alpha \vnorm{\cc(\xx) + \JJ(\xx)\pp} + (1-\alpha)\vnorm{\cc(\xx)} - \vnorm{\cc(\xx)}) \\
        &= \alpha \pi(\vnorm{\cc(\xx) + \JJ(\xx)\pp} - \vnorm{\cc(\xx)}),
    \end{align*}
    which establishes \cref{eqn:merit function upper bound}. Rearranging \cref{eqn:merit function upper bound}, dividing by $\alpha$ and taking the limit $\alpha \downarrow 0$ we have 
    \begin{align*}
        D\phi(\xx; \pi)[\pp] = \lim_{\alpha \downarrow 0} \frac{\phi(\xx + \alpha \pp; \pi) - \phi(\xx; \pi)}{\alpha} \leq \dotprod{\grad f(\xx), \pp} + \pi(\vnorm{\cc(\xx) + \JJ(\xx) \pp} - \vnorm{\cc(\xx)}).
    \end{align*}
\end{proof}

\section{Projected CR Algorithm Additional Details} \label{apx:CR algorithm details}

In this section, we show how the projected CR algorithm (\cref{alg:conjugate residual}) can be implemented without access to a factorization of the projection matrix $\PP$. In particular, access to the mapping $\vv \mapsto \PP\vv$ is sufficient.

Let $\HH$, $\HHb$, $\bgg$, $\bbgg$ and $\PP=\ZZ\ZZ^\transpose$ be as in \cref{eqn:projected CR formulation}. Our goal is to demonstrate that, despite the reduced-space formulation in \cref{eqn:projected CR formulation} and the presentation of \cref{alg:conjugate residual}, in practice, we do not require explicit access to the factors of the projection matrix, i.e., $\ZZ$. 

Specifically, we show that the projected CR updates can be implemented by tracking several fundamental ``primitive vectors'' in the ambient space, to which only the unreduced Hessian-vector product, $\vv \mapsto \HH\vv$, and the projection operator, $\vv \mapsto \PP\vv$, are applied. Suppose that a reduced-space quantity $\yy$ arising in the projected CR iteration admits the representation $\yy = \ZZ^\transpose \xx$ for some $\xx$. We refer to the particular ambient-space vector $\xx$ tracked by the algorithm as the associated primitive vector of $\yy$, and denote it by $\primeb{\yy} = \xx$. Thus, the primitive is not obtained by inverting the relation $\yy = \ZZ^\transpose \xx$. Rather, it is a concrete vector generated during the iteration for which multiplication by $\ZZ^\transpose$ has simply been deferred. The corresponding reduced-space quantity is recovered, when needed, through $\yy = \ZZ^\transpose \primeb{\yy}$. By expressing the projected CR recurrences directly in terms of these ambient-space representatives, we avoid applying $\ZZ$ or $\ZZ^\transpose$ individually and require only applications of $\HH$ and $\PP = \ZZ\ZZ^\transpose$.

We begin with the initialization of the algorithm and how each key update vector within \cref{alg:conjugate residual} can be written as a primitive. Examining the initialization within \cref{alg:conjugate residual} we see that \begin{align*}
    \primeb{\yyzero} = \primeb{\brrzero} = - \bgg, \ \primeb{\bddzero} = 0, 
\end{align*}
and 
\begin{align*}
    \primeb{\HHb\yyzero}=\primeb{\HHb\brrzero} = \primeb{ -\ZZ^\transpose \HH \ZZ \ZZ^\transpose \bgg} = -\HH\PP \bgg.
\end{align*}
Notably, each of these vectors can be expressed in terms of $\HH$, $\PP$, and $\bgg$. With these initializations in hand, it follows from \cref{alg:conjugate residual} that, at every iteration $t\geq 0$, the vectors $\bddtt$, $\yytt$, $\brrtt$, and $\HHb\yytt$ can each be written in the form $\ZZ^\transpose\primeb{\cdot}$, where $\primeb{\cdot}$ denotes the corresponding primitive vector. This follows directly from the linearity of the updates, the above initialization, and the identity $\HHb=\ZZ^\transpose\HH\ZZ$, via a straightforward induction argument. Alternatively, one may observe that these vectors belong to Krylov subspaces of the form $\sK_t(\ZZ^\transpose\HH\ZZ,\ZZ^\transpose\bgg)$ or $\ZZ^\transpose\HH\ZZ\sK_t(\ZZ^\transpose\HH\ZZ,\ZZ^\transpose\bgg)$.

Next, we show that the CR iteration from $t-1$ to $t$ can be performed using the primitive vectors from the previous iteration, together with access to mappings $\vv \mapsto \HH\vv$ and $\vv \mapsto \PP\vv$. In particular, let $t>0$ and suppose we have the vectors 
\begin{align*}
    \primeb{\bddttm}, \primeb{\HHb \yyttm}, \primeb{\HHb\brrttm}, \primeb{\brrttm}, \primeb{\yyttm}, \tageq\label{eqn:projected CR storage vectors}
\end{align*}
stored in memory. Firstly, observe that
\begin{align*}
    \zeta_{t-1} &= \frac{\dotprod{\brrttm, \HHb \brrttm}}{\vnorm{\HHb\yyttm}^2} = \frac{\dotprod{\ZZ^\transpose \primeb{\brrttm}, \ZZ^\transpose \primeb{\HHb \brrttm}}}{\dotprod{\ZZ^\transpose\primeb{\HHb\yyttm}, \ZZ^\transpose\primeb{\HHb\yyttm}}} = \frac{ \dotprod{\primeb{\brrttm}, \PP \primeb{\HHb \brrttm}}}{\dotprod{\primeb{\HHb\yyttm}, \PP\primeb{\HHb\yyttm}}}, 
\end{align*}
and hence
\begin{align*}
    \primeb{\bddtt} &= \primeb{\bddttm} + \zeta_{t-1} \primeb{\yyttm}, \\
    \primeb{\brrtt} &= \primeb{\brrttm} - \zeta_{t-1} \primeb{\HHb \yyttm}.
\end{align*}
The key step in moving from iteration $t-1$ to $t$ is computing the product $\HHb \brrtt$ (in practice, this product is used to update $\HHb \yytt$ and hence $\yytt$). We compute its corresponding primitive by noting that
\begin{align*}
    \HHb\brrtt = \ZZ^\transpose \HH \ZZ \ZZ^\transpose \primeb{\brrtt},
\end{align*}
which implies
\begin{align*}
    \primeb{\HHb\brrtt} = \HH\PP \primeb{\brrtt}.
\end{align*}
With this vector in hand, we have
\begin{align*}
    \beta_{t-1} &=\frac{\dotprod{ \brrtt, \HHb \brrtt}}{\dotprod{\brrttm, \HHb \brrttm}} =  \frac{\dotprod{\ZZ^\transpose \primeb{\brrtt}, \ZZ^\transpose \primeb{\HHb \brrtt}}}{\dotprod{\ZZ^\transpose \primeb{\brrttm}, \ZZ^\transpose \primeb{\HHb \brrttm}}} = \frac{\dotprod{\primeb{\brrtt}, \PP \primeb{\HHb \brrtt}}}{\dotprod{ \primeb{\brrttm}, \PP \primeb{\HHb \brrttm}}},
\end{align*}
and hence
\begin{align*}
    \primeb{\HHb\yytt} &= \primeb{\HHb\brrtt} + \beta_{t-1} \primeb{\HHb \yyttm},\\
    \primeb{\yytt} &= \primeb{\brrtt} + \beta_{t-1} \primeb{\yyttm}.
\end{align*}
Thus, using only the mappings $\vv \mapsto \PP\vv$ and $\vv \mapsto \HH\vv$, each of the vectors in \cref{eqn:projected CR storage vectors} can be updated to iteration $t$, effectively carrying out the CR update from iteration $t-1$ to iteration $t$.

We now show that the termination conditions and return values in \cref{alg:conjugate residual} can also be computed using primitive vectors. In particular, the LPC detection condition \cref{eqn:LPC condition} can be verified for $t\geq0$ using
\begin{align*}
\dotprod{\brrtt, \HHb \brrtt}
&= \dotprod{\ZZ^\transpose \primeb{\brrtt}, \ZZ^\transpose \primeb{\HHb \brrtt}}
= \dotprod{\primeb{\brrtt}, \PP \primeb{\HHb \brrtt}},
\end{align*}
and
\begin{align*}
\vnorm{\brrtt}^2
= \dotprod{\ZZ^\transpose \primeb{\brrtt}, \ZZ^\transpose \primeb{\brrtt}}
= \dotprod{\primeb{\brrtt}, \PP \primeb{\brrtt}}.
\end{align*}
Meanwhile, \cref{eqn:tangent component termination condition} can be evaluated using
\begin{align*}
\vnorm{\HHb\brrtt}^2
= \dotprod{\primeb{\HHb\brrtt}, \PP \primeb{\HHb\brrtt}},
\end{align*}
and
\begin{align*}
\vnorm{\HHb \bddtt}^2
= \vnorm{\brrtt + \bbgg}^2
= \vnorm{\ZZ^\transpose \left(\primeb{\brrtt} + \bgg\right)}^2
= \dotprod{\primeb{\brrtt} + \bgg, \PP\left(\primeb{\brrtt} + \bgg\right)}.
\end{align*}
It remains to verify, that when \cref{alg:conjugate residual} terminates the return values are available from the primitive vectors. It is easy to see that if  $\FLAG=\SOL$, then  $\ZZ \bddtt = \ZZ \ZZ^\transpose \primeb{\bddtt} = \PP \primeb{\bddtt}$. Otherwise if $\FLAG=\LPC$,  $\ZZ \brrtt = \ZZ \ZZ^\transpose \primeb{\brrtt} = \PP \primeb{\brrtt}$. 

The preceding construction reveals that significant computational savings can be obtained by storing certain additional vectors. First, in a similar manner to standard CR, only a single Hessian-vector product, $\primeb{\HHb\brrtt} = \HH\PP \primeb{\brrtt}$, is required to perform an iteration of CR from $t-1$ to $t$. A similar reduction in the cost of applying the mapping $\vv \mapsto \PP\vv$ can be achieved by storing additional vectors.
In particular, let $t>0$ and suppose that, in addition to \cref{eqn:projected CR storage vectors}, we store the following projected vectors:
\begin{align*}
\PP\primeb{\bddttm}, \PP\primeb{\HHb \yyttm}, \PP\primeb{\HHb\brrttm}, \PP\primeb{\brrttm}, \PP\primeb{\yyttm},
\tageq\label{eqn:projected CR projected storage vectors}
\end{align*}
and perform the CR iteration from $t-1$ to $t$. By the above analysis, we can compute $\zeta_{t-1}$, $\PP\primeb{\bddtt}$, and $\PP\primeb{\brrtt}$ using \cref{eqn:projected CR storage vectors} and \cref{eqn:projected CR projected storage vectors}. Furthermore, by computing and storing $\PP \primeb{\HHb\brrtt} = \PP (\HH \PP \primeb{\brrtt})$, we can also compute $\beta_{t-1}$ and subsequently update $\PP\primeb{\HHb\yytt}$ and $\PP\primeb{\yytt}$, thereby completing the CR update from $t-1$ to $t$.
Moreover, with these quantities available, we can also verify the termination conditions and, if necessary, compute the corresponding return values. Thus, in practice, \cref{alg:conjugate residual} requires only a single application of the mapping $\vv \mapsto \PP\vv$ per iteration.

\newpage 

\section{ALM Algorithm Statement}

In this section, we state the full ALM algorithm we compare against in \cref{sec:numerical}.

\begin{algorithm}[ht]
\caption{One-loop augmented Lagrangian method with L-BFGS inner iterations.}
\label{alg:alm-lbfgs}
\begin{algorithmic}[1]
\Require $\xx_0$, $\blambda_0 =0$, $\rho=1$, $\tau_{\mathrm{ALM}}>0$, $\tau_{\min}>0$.
\State $\tau = \tau_{\mathrm{ALM}} \vnorm{\grad_\xx \sL_{\rho}(\xx_0, \blambda_0)}$, $r = \vnorm{\cc_0}$.
\For{$k=0,1,2,\dots$}
    \If{$\vnorm{\grad f_k+ \JJ^\transpose_k \blambda_{k}} \le \epsp$ \textbf{and} $\vnorm{\cc_k} \le \epsc$}
            \Return{$(\xx_k,\blambda_{k})$.}
        \EndIf
    \State Compute $\xx_{k+1}$ using an L-BFGS step with a zoom line search applied to the current augmented Lagrangian $\sL_\rho$.
    \If{$\vnorm{\grad_\xx \sL_{\rho}(\xx_{k+1}, \blambda_k)} \leq \tau$}
        \State $\blambda_{k+1} \gets \blambda_k + \rho \cc_{k+1}$
        \If{$\vnorm{\cc_{k+1}} > 0.25 \, r$ \textbf{and} $\vnorm{\cc_{k+1}} > \epsc$}
            \State $\rho \gets \min(2\rho,10^{4})$
        \EndIf
        \State $r \gets \vnorm{\cc_{k+1}}$; $\tau \gets \max(0.5\tau,\tau_{\min})$; reset L-BFGS memory.
    \Else
        \State $\blambda_{k+1} \gets \blambda_k$
    \EndIf
\EndFor
\end{algorithmic}
\end{algorithm}

\end{document}